\def\bbh{\mathbb{H}}
\def\bbr{\mathbb{R}}
\def\bbs{\mathbb{S}}
\def\M{M}
\def\d{\mathrm{d}}
\newcommand{\dm}{n}
\newcommand{\dist}{d}
\newcommand{\calP}{\mathcal{P}}
\newcommand{\supp}{\mathrm{supp}}
\newcommand{\dt}{d_2}
\newcommand{\V}{\mathbf{v}}
\newcommand{\calA}{\mathcal{A}}
\newcommand{\ta}{a}
\newcommand{\sa}{b}
\newcommand{\trho}{\tilde{\rho}}
\newcommand{\Ac}{A_c}
\newcommand{\rhoc}{\rho_c}
\newcommand{\trhoc}{\tilde{\rho}_c}
\newcommand{\Rc}{R_c}
\newcommand{\Ah}{A_h}
\newcommand{\rhoh}{\rho_h}
\newcommand{\trhoh}{\tilde{\rho}_h}
\newcommand{\Rh}{R_h}
\newcommand{\Am}{A_m}
\newcommand{\trhom}{\tilde{\rho}_m}
\newcommand{\fb}{f}
\newcommand{\calK}{\mathcal{K}}
\newcommand{\D}{D}
\newcommand{\R}{R}
\newcommand{\HL}{H_\mathrm{L}}
\newcommand{\HR}{H_\mathrm{R}}
\newcommand{\calPc}{\mathcal{P}^{ac}}
\newcommand{\calPcz}{\mathcal{P}^{ac}_0}
\newcommand{\crho}{c_{\rho}}
\newcommand{\crhoh}{\hat{c}_{\rho}}
\newcommand{\nx}{\hat{n}_x}
\newcommand{\rightlong}{\shortmid \! \longrightarrow}
\def\*#1{\mathbf{#1}}
\newcommand{\norm}[1]{\left\lVert#1\right\rVert}
\theoremstyle{plain}
\newtheorem{thm}{Theorem}[section]
\newtheorem{prop}[thm]{Proposition}
\newtheorem{lem}[thm]{Lemma}
\newtheorem{crly}[thm]{Corollary}
\theoremstyle{remark}
\newtheorem{remark}[thm]{Remark}
\newtheorem{rmk}[thm]{Remark}
\theoremstyle{definition}
\newtheorem{defn}{Definition}[section]
\begin{document}

\title{Equilibria and energy minimizers for an interaction model on the hyperbolic space} 

\author{Razvan C. Fetecau \thanks{Department of Mathematics, Simon Fraser University, 8888 University Dr., Burnaby, BC V5A 1S6, Canada; van@math.sfu.ca}
\and Hansol Park \thanks{Department of Mathematics, Simon Fraser University, 8888 University Dr., Burnaby, BC V5A 1S6, Canada; hansol$\_$park@sfu.ca}}

\maketitle

\begin{abstract}
{We study an intrinsic model for collective behaviour on the hyperbolic space $\bbh^\dm$. We investigate the equilibria of the aggregation equation (or equivalently, the critical points of the associated interaction energy) for interaction potentials that include Newtonian repulsion. By using the method of moving planes, we establish the radial symmetry and the monotonicity of equilibria supported on geodesic balls of $\bbh^\dm$. We find several explicit forms of equilibria and show that one such equilibrium is a global energy minimizer.  We also consider more general potentials and utilize a technique used for $\bbr^\dm$ to establish the existence of compactly supported global minimizers. Numerical simulations are presented, suggesting that some of the equilibria studied here are global attractors. The key tool in our investigations is a family of isometries of $\bbh^\dm$ that we have developed for this purpose.}
\end{abstract}

\textbf{Keywords}: swarming on manifolds, hyperbolic space, energy minimizers, Newtonian potential

\section{Introduction}
\label{sect:intro}
In recent years there has been extensive work on investigating minimizers of nonlocal interaction energies, as motivated by a wide range of applications in a variety of disciplines, such as biology, physics, economics and social sciences \cite{CaCaPa2015, ChFeTo2015, SiSlTo2015, Balague_etalARMA, CaFiPa2017, FrankLieb2018}. With very few exceptions, such works have studied the Euclidean case, where individual particles in $\bbr^\dm$ interact via an interaction potential that depends on the Euclidean distance between them. Since our goal in this paper is to consider nonlocal interaction energies set up on Riemannian manifolds (in particular, on the hyperbolic space), we will present the general setup from the start.

Consider an $n$-dimensional Riemannian manifold $M$, and denote by $\calP(M)$ the set of probability measures on $M$. Define the {\em interaction energy} $E : \calP(M) \to \bbr \cup \{ \infty \}$ by
\begin{equation}
\label{eqn:energy}
E[\rho] = \frac{1}{2} \iint_{M \times M}  K(x,y) \d \rho(x) \d \rho(y),
\end{equation}
where $K: M \times M \to \bbr$ is an {\em interaction potential} that models attractive and repulsive interactions. More precisely, the negative manifold gradient $-\nabla_M K(x,y)$ (the gradient with respect to $x$) of $K$ provides the interaction force that a point mass located at $x$ feels by interacting with a point mass located at $y$ \cite{CaCaPa2015, FeZh2019}.

The interaction energy \eqref{eqn:energy} is closely related to the following evolution equation for a population density $\rho$: 
\begin{equation}
\partial_t \rho-\nabla_M \cdot(\rho \nabla_\M K\ast\rho)=0, \label{eqn:model}
\end{equation}
where $\nabla_\M \cdot$ denotes the manifold divergence on $\M$. Here, for a time-dependent measure $\rho_t$ on $\M$, the convolution $K \ast \rho_t$ is defined by
\begin{equation} \label{eqn:conv}
	K * \rho_t(x) = \int_M K(x,y) \d \rho_t(y).
\end{equation}

Equation \eqref{eqn:model} is in the form of an active transport equation for the density $\rho$, with a nonlocal velocity field given by
\begin{equation} \label{eqn:v-field}
	\V[\rho] (x,t) =  -\nabla_M K *\rho_t (x).
\end{equation}
Note that $\displaystyle{\V[\rho] = - \nabla_M \frac{\delta E[\rho]}{\delta \rho}}$, which sets (at least formally) equation \eqref{eqn:model} as the gradient flow of the energy \eqref{eqn:energy}. One can check by a direct calculation that indeed, formally, the energy $E$ decays in time along solutions of equation \eqref{eqn:model} \cite{FeZh2019}. Equilibria (steady states) of the aggregation model \eqref{eqn:model} are the critical points of the interaction energy, and from a dynamical point of view, we expect that solutions to model \eqref{eqn:model} approach asymptotically as $t \to \infty$, local minimizers of the energy. 

There has been extensive literature in recent years on interaction energies of type \eqref{eqn:energy}, with the vast majority of the works focusing on the case $M=\bbr^\dm$. One approach is to use the techniques developed in \cite{AGS2005} and formulate the gradient flow of the energy $E$ on the space of probability measures with finite second (or other order) moments, equipped with the $2$-Wasserstein metric  \cite{CaMcVi2006, Figalli_etal2011, WuSlepcev2015}. Another approach is to use direct methods of the calculus of variations and establish the existence of global minimizers (ground states) of the interaction energy \cite{ChFeTo2015, SiSlTo2015, FrankLieb2018}. There are also various studies of quantitative and qualitative properties of minimizers, such as their regularity \cite{CaDeMe2016} and compactness and dimensionality of their support \cite{CaCaPa2015, Balague_etalARMA, CaFiPa2017}. 

Model \eqref{eqn:model} can exhibit a very diverse range of swarming or self-organized behaviour. We refer to \cite{KoSuUmBe2011, Brecht_etal2011, FeHuKo11, BaCaLaRa2013, CaHuMa2014} for explicit calculations and numerical illustrations of equilibria for the model in $\bbr^\dm$, which include aggregations on disks, annuli, rings, and soccer balls. The model on manifolds is far less investigated, but nevertheless, a similarly diverse set of equilibria was demonstrated in \cite{FeZh2019} for the hyperbolic plane and also, interesting aggregation patterns were shown in \cite{FeHaPa2021} for the model set up on the rotation group $SO(3)$. Applications of model \eqref{eqn:model} are numerous, e.g., to flocking and swarming of biological organisms \cite{CarrilloVecil2010}, material science and granular media \cite{CaMcVi2006}, self-assembly of nanoparticles \cite{HoPu2005}, robotics \cite{Gazi:Passino,JiEgerstedt2007} and opinion formation \cite{MotschTadmor2014}. 

In our study we will follow the {\em intrinsic} approach pursued in various recent papers \cite{FeZh2019, FeHaPa2021, FePaPa2021} and assume that $K(x,y)$ only depends on the {\em geodesic} distance $d(x,y)$ between the two points. By an abuse of notation we write $K(x,y) = K(d(x,y))$. In contrast, when $M$ comes with a canonical embedding in a larger Euclidean space (e.g., a surface in $\bbr^3$), $K(x,y)$ can be assumed to depend on the {\em Euclidean} distance $|x -y|$ in the ambient space between $x$ and $y$; we refer to this case as the {\em extrinsic} approach. The two approaches yield very different models (unless $M=\bbr^\dm$, in which case the two models are the same). For extrinsic models we refer to \cite{WuSlepcev2015, PaSl2021} for studies on well-posedness by gradient flow techniques, and to various works on the emergent behaviours of the extrinsic models on manifolds, e.g., on sphere \cite{HaChCh2014},  unitary matrices \cite{Lohe2009, HaRy2016, HaKoRy2018}, hyperbolic space \cite{Ha-hyperboloid}, and Stiefel manifolds \cite{HaKaKi2022}.  Relevant to the present paper, we note the work in \cite{Ha-hyperboloid}, where the authors study synchronization behaviour on the hyperbolic space with an extrinsic quadratic potential (i.e., $K(x,y) = |x-y|^2$) for the discrete analogue of \eqref{eqn:model}. A related study is done in \cite{RiLoWi2018}. The emergence of self-synchronization has been widely studied in the literature, due to its numerous occurrences in biological, physical and chemical systems (e.g., flashing of fireflies, neuronal synchronization in the brain, quantum synchronization) -- see \cite{Lohe2009, Ha-hyperboloid} and references therein.

For intrinsic models, the sign of $K'$ determines whether the interactions are repulsive or attractive in nature. Indeed, provided $x$ and $y$ are not in the cut locus of each other, by chain rule one gets
\[
-\nabla_M K(\dist(x,y)) = K'(\dist(x,y)) \frac{\log_x y}{\dist(x,y)},
\]
where $\log_x y$ denotes the Riemannian logarithm map (i.e., the inverse of the Riemannian exponential map) on $M$. Therefore, by interacting with the point mass at $y$, the point mass at $x$ is driven by a force of magnitude proportional to $|K'(d(x,y))|$, to move either towards $y$ (if $K'(d(x,y)) >0$) or away from $y$  (if $K'(d(x,y)) < 0$). To obtain non-trivial swarming behaviours, repulsion and attraction must balance each other. Typically, such interaction potentials have short-range repulsion and long-range attraction, i.e., $K'(r)<0$ for small $r$ and $K'(r)>0$ for large $r$ (see Figure \ref{Fig:K}). 

The interaction energy for intrinsic models is invariant under isometries. Indeed, if $f:M\to M$ is an isometry of $M$ and $f_\# \rho$ is the push-forward by $f$ of the measure $\rho$, then it is immediate to show that 
\begin{equation}
\label{eqn:E-inv}
E[f_\# \rho] = E[\rho].
\end{equation}
Consequently, the energy minimizers can only be expected to be unique up to isometries.

In this paper we are exclusively concerned with studying the nonlocal aggregation model on the hyperbolic space $\bbh^\dm$, represented as a one-sheeted hyperboloid in $\bbr^{\dm+1}$ endowed with the Lorentzian inner product.  Based on the Lorentz transform, we construct a family of isometries of $\bbh^\dm$ and a binary operation on $\bbh^\dm \times \bbh^\dm$, along with a concept of coordinates on $\bbh^\dm$, that are the key tools in the results we establish in the present work. 

In Section \ref{sect:Newt-repulsion} we consider attractive-repulsive interaction potentials that include Newtonian repulsion, i.e., potentials for which the repulsion component is given by the Green's function of the negative Laplacian on $\bbh^\dm$. We note here that the Green's function on $\bbh^\dm$ has an explicit expression  \cite{CohlKalnins2012} -- see \eqref{eqn:Phi}. For the model on $\bbr^\dm$, potentials in this form have been given extensive attention in recent years \cite{BertozziLaurentLeger, FeHuKo11, FeHu13, CaHuMa2014, ShuTadmor2021}, and a potential with Newtonian repulsion was also considered on the hyperbolic plane \cite{FeZh2019}.  In this section we only consider equilibria that are absolutely continuous with respect to the canonical volume measure on $\bbh^\dm$. This choice is motivated by results in the Euclidean space $\bbr^\dm$ which have established  that for interaction potentials with Newtonian repulsion, the local minimizers of the interaction energy are absolutely continuous with respect to the Lebegue measure \cite{CaHuMa2014}. 

We study equilibria supported in geodesic balls of $\bbh^\dm$ and show that such equilibria must be radially symmetric and monotone with respect to the centre of the ball. To show this result (Theorem \ref{thm:mov-planes}) we use the method of moving planes, a well-known method for studying qualitative properties of positive solutions of elliptic equations \cite{GidasNiNirenberg}. More recently, the method found applications in integral equations \cite{ChenLiOu,FeHu13}. The challenge here is to adapt the method to the hyperboloid; to the best of our knowledge this the first application of the method of moving planes to integral equations on manifolds. We also find several explicit forms of radially symmetric equilibria, corresponding to particular choices of the attractive part of the potential. For one such attractive potential, we show that the equilibrium we computed is in fact the global energy minimizer. We also present various numerical simulations in Section \ref{sect:numerics}. 

In Section \ref{sect:existence} we investigate the existence of compactly supported global minimizers of the energy \eqref{eqn:energy} for $M=\bbh^\dm$. As we apply and follow closely the technique used by Ca{\~n}izo {\em et al.} \cite{CaCaPa2015} for the Euclidean case, we only highlight the main differences from their work. In other words, we are not aiming for a self-contained presentation in Section \ref{sect:existence}, as some of results from \cite{CaCaPa2015} would transfer with no modifications to $\bbh^\dm$. Nevertheless, we show how the family of isometries and the concept of coordinates on $\bbh^\dm$ that we developed, enable a similar approach as for the Euclidean case. In particular, minimizers cannot have too large gaps, as otherwise, an isometric displacement of one of the components would decrease the energy (Lemma \ref{lemma:separation}).

Finally, we provide some numerical results in Section \ref{sect:numerics}. Using various interaction potentials, we illustrate the dynamical evolution  to steady states of radially symmetric solutions to \eqref{eqn:model}. The equilibria we present are qualitatively different in terms in their monotonicity, in consistency with the analytical findings. Numerics suggest these equilibria are global attractors for the dynamics (at least for radial initial densities).

The summary of the paper is the following. Section \ref{sect:prelim} provides key background needed for the paper, on the variational formalism and on the geometrical properties of $\bbh^\dm$. Section \ref{sect:Newt-repulsion} contains analytical results for interaction potentials that contain Newtonian repulsion (characterization of monotonicity, explicit forms for equilibria, convexity of energy). In Section \ref{sect:existence} we establish the existence of compactly supported global minimizers, following the approach for $\bbr^\dm$ from \cite{CaCaPa2015}. Proof of various lemmas are deferred to the Appendix.


\section{Preliminaries}
\label{sect:prelim}
In this section we provide some background on the variational formalism used in our paper, as well as on the various geometrical properties of $\bbh^\dm$, in particular on a family of isometries of the hyperbolic space that plays a key role in our studies.


\subsection{Variational approach: Euler-Lagrange equations}
\label{subbsect:E-L}
For the interaction energy set up on the Euclidean space $\bbr^\dm$, there exists a well-established approach to characterize its local minimizers (by Euler-Lagrange equations) in the topology of transport distances \cite{Balague_etalARMA}. As this approach extends immediately to arbitrary manifolds, we will simply list below the main notations and results.

Take $p$ to be a fixed point in $M$, and denote by $\calP_2(M)$ the set of probability measures that have finite moments of order $2$ (with respect to $p$), i.e.,
\[
\calP_2(M) = \left \{ \mu \in \calP(M): \int_M \dist^2(x,p) \d \mu(x) < \infty \right \}.
\]
The $2$-Wasserstein distance between two measures  $\mu,\sigma \in \calP_2(M)$ is given by:
\[
	\dt^2 (\mu,\sigma) = \inf_{\pi \in \Pi(\mu,\sigma)} \iint_{M \times M} \dist^2(x,y) \d\pi(x,y),
\]
where $\Pi(\mu,\sigma) \subset \calP (M \times M)$ is the set of transport plans between $\mu$ and $\sigma$, i.e., the set of probability measures on the product space $M\times M$ with first and second marginals $\mu$ and $\sigma$, respectively. 

A probability measure $\mu \in \calP_2(M)$ is a local minimizer of $E$ with respect to the $\dt$-distance provided there exists $\epsilon>0$ such that $E[\sigma] \geq E[\mu]$, for all $\sigma \in B(\mu,\epsilon)$, where $B(\mu,\epsilon)$ denotes the open ball (in $\dt$) centred at $\mu$ and of radius $\epsilon$. 

%
%
%

We will pay particular attention in this paper to probability measures that are absolutely continuous with respect to the canonical volume measure $\d x$; we denote this space by $\calPc(M)$.  
Throughout the paper, we will refer to an absolutely continuous measure directly by its density $\rho$, and by abuse of notation write $\rho \in \calPc(M)$ to mean $\d \rho (x) = \rho(x) \d x \in \calPc(M)$. Finally, we denote by $\calPc_c(M)$ the space of absolutely continuous probability measures with compact support in $M$.

The following result, stated originally for $M=\bbr^\dm$, extends trivially to arbitrary Riemannian manifolds.
\begin{prop}[\cite{Balague_etalARMA}, Theorem 4 and Remark 4]\label{lem:loc-min}
Assume the interaction potential $K$ is bounded from below and lower semicontinuous. Let $\rho \in \calPc(M) \cap \calP_2(M)$ be a local minimizer of $E$ with respect to $\dt$. Then, 
\begin{subequations}
\label{eqn:min-cond}
\begin{alignat}{1}
\Lambda(x)&=\lambda,\qquad\text{ for a.e. } x \in \supp(\rho), \label{eqn:min-cond-1} \\
\Lambda(x) & \geq\lambda, \qquad\text{ for a.e. on } M \setminus \supp(\rho), \label{eqn:min-cond-2}
\end{alignat}
\end{subequations}
where
\begin{align}\label{eqn:Lambda}
\Lambda(x):=\int_{M }K(x, y) \rho(y) \d y,
\end{align}
and $\lambda = 2 E[\rho] $ is a constant.
\end{prop}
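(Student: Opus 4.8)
The plan is to adapt the Euclidean argument of \cite{Balague_etalARMA} essentially verbatim, the only genuinely geometric input being that the transport cost in $\dt$ is now measured by the geodesic distance on $M$. The engine of the proof is the first-variation expansion of $E$. For a signed measure $\chi$ on $M$ with $\chi(M)=0$, absolutely continuous and of bounded support, the symmetry $K(x,y)=K(y,x)$ gives
\begin{align}
E[\rho+t\chi]=E[\rho]+t\int_M \Lambda(x)\,\d\chi(x)+\frac{t^2}{2}\iint_{M\times M}K(x,y)\,\d\chi(x)\,\d\chi(y),
\end{align}
with $\Lambda=K\ast\rho$ as in \eqref{eqn:Lambda}. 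The idea is to build admissible competitors $\rho_t=\rho+t\chi$ that are genuine probability densities close to $\rho$ in $\dt$, and to read off \eqref{eqn:min-cond-1}--\eqref{eqn:min-cond-2} from the sign that local minimality forces on the first-order term $\int_M\Lambda\,\d\chi$.

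First I would prove \eqref{eqn:min-cond-1} by contradiction. If $\Lambda$ is not a.e.\ equal to a constant on $\supp(\rho)$, then there exist a value $c$ and a $\delta>0$ such that the bounded sets $A=\{x\in\supp(\rho): \Lambda(x)<c-\delta\}$ and $B=\{x\in\supp(\rho):\Lambda(x)>c+\delta\}$ (intersected with a large geodesic ball) both have positive $\rho$-measure. Writing $\nu_A=\rho|_A/\rho(A)$ and $\nu_B=\rho|_B/\rho(B)$ for the normalized restrictions, set $\chi=\nu_A-\nu_B$ and $\rho_t=\rho+t\chi$. For small $t>0$ this is an absolutely continuous probability measure, obtained from $\rho$ by transporting a mass $t$ from $B$ to $A$; since $A\cup B$ is bounded, $\dt^2(\rho_t,\rho)\le t\,\mathrm{diam}(A\cup B)^2$, so $\rho_t\in B(\rho,\epsilon)$ for $t$ small. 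The first-order term equals $\int_A\Lambda\,\d\nu_A-\int_B\Lambda\,\d\nu_B<(c-\delta)-(c+\delta)=-2\delta<0$, while the second-order term is $O(t^2)$ and finite because $\chi$ is a bounded-support density and $K$ is integrable against $\chi\otimes\chi$. Hence $E[\rho_t]<E[\rho]$ for small $t>0$, contradicting minimality. Therefore $\Lambda\equiv\lambda$ a.e.\ on $\supp(\rho)$ for some constant $\lambda$, and integrating against $\rho$ gives $\lambda=\int_M\Lambda\,\d\rho=2E[\rho]$, as claimed.

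The inequality \eqref{eqn:min-cond-2} follows from the same mechanism. If, for some $\delta>0$, the set $\{\Lambda<\lambda-\delta\}\setminus\supp(\rho)$ had positive volume, I would choose a bounded subset $C$ of it with $0<\mathrm{vol}(C)<\infty$ and a bounded $A\subset\supp(\rho)$ with $\rho(A)>0$, and transport mass from $A$ to $C$: with $\mu_C=\mathbf{1}_C\,\d x/\mathrm{vol}(C)$ and $\nu_A=\rho|_A/\rho(A)$, set $\chi=\mu_C-\nu_A$ and $\rho_t=\rho+t\chi$. This is again an absolutely continuous probability density, close to $\rho$ in $\dt$, whose first-order energy variation is $\int_C\Lambda\,\d\mu_C-\int_A\Lambda\,\d\nu_A<(\lambda-\delta)-\lambda=-\delta<0$, using $\Lambda=\lambda$ a.e.\ on $\supp(\rho)\supseteq A$. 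As before this contradicts minimality, so $\Lambda\ge\lambda$ a.e.\ on $M\setminus\supp(\rho)$.

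The step requiring the only real verification---and the reason the result transfers ``trivially''---is the Wasserstein estimate $\dt(\rho_t,\rho)\to0$: on $M$ the transport cost is the geodesic distance, but since the displaced mass is $O(t)$ and confined to a bounded set $S$, the crude bound $\dt^2(\rho_t,\rho)\le t\,\mathrm{diam}(S)^2$ holds exactly as in $\bbr^\dm$. The only other point to check is that the competitors have finite energy, so that the strict first-order decrease genuinely lowers $E$ rather than landing at $+\infty$; this uses the absolute continuity and bounded density of $\rho_t$ together with the local integrability of $K$ (for the potentials of interest here the Newtonian singularity on $\bbh^\dm$ is integrable, so no difficulty arises). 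All remaining ingredients---existence of the separated level sets $A,B,C$ and the measure-theoretic bookkeeping---are insensitive to the geometry and carry over unchanged from \cite{Balague_etalARMA}.
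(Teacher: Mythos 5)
Your argument is correct and is essentially the one the paper relies on: the paper does not reprove this proposition but cites \cite{Balague_etalARMA} and notes immediately afterwards that \eqref{eqn:min-cond-1} is obtained by perturbations supported in $\supp(\rho)$ and \eqref{eqn:min-cond-2} by perturbations supported outside, which is precisely your two-step first-variation construction, with the only manifold-specific check (the crude bound $\dt^2(\rho_t,\rho)\le t\,\mathrm{diam}(S)^2$ via geodesic distance) correctly identified. The one point worth keeping in mind is the finiteness of the second-order term $\iint K\,\d\chi\,\d\chi$ for the outward perturbation, which you flag and which is where the integrability hypotheses of the cited theorem enter.
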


Condition \eqref{eqn:min-cond-1} is shown by taking perturbations of $\rho$ supported in $\supp(\rho)$. The condition is equivalent to $\rho$ being a critical point of $E$, at which the first variation of the energy vanishes. The constant $\lambda$, which plays the role of a Lagrange multiplier in the derivation of \eqref{eqn:min-cond-1}, has a physical interpretation \cite{BeTo2011}: it represents the energy per unit mass felt by a point mass at position $x$ due to interaction with all points in $\supp(\rho)$. 

We also note that critical points of the interaction energy represent equilibrium solutions (steady states) of the aggregation model \eqref{eqn:model}. This can be inferred from the fact that the velocity $\V(x) = - \nabla_M K \ast \rho (x) = -\nabla_M \Lambda(x)$ vanishes for $x\in \supp(\rho)$, given that $\Lambda$ is constant in $\supp(\rho)$. We will use these concepts (critical point of the energy, equilibrium solution, steady state) interchangeably in the sequel.

The necessary condition \eqref{eqn:min-cond-2} is found by taking perturbations of $\rho$ that can be supported anywhere in $M$, in particular outside $\supp(\rho)$. The interpretation of  \eqref{eqn:min-cond-2} is that transporting mass from the support of $\rho$ into its complement increases the total energy (hence $\rho$ is a local minimizer, being in a favourably energetic state).

\subsection{Geometric properties of the hyperbolic space $\bbh^\dm$}
\label{sec:2.2}
In this part, we present some geometric properties of the hyperbolic space $\bbh^\dm$ which will be used in the paper.

\subsubsection{Hyperboloid model, polar coordinates and notations}
We consider the space $\bbr^{\dm+1}$ endowed with the (negative) Lorentzian inner product:
\begin{equation}
\label{eqn:iproduct-L}
\langle x,y \rangle = x_0 y_0 - x_1 y_1 - \cdots - x_\dm y_\dm,
\end{equation}
where $x = (x_0,x_1, \dots, x_\dm)$, $y=(y_0,y_1,\dots,y_\dm) \in \bbr^{\dm+1}$. The inner product \eqref{eqn:iproduct-L} induces the (complex-valued) norm defined by
\begin{equation}
\label{eqn:norm-L}
\| x \| = \langle x,x \rangle^{\frac{1}{2}}, \qquad \text{ for } x \in \bbr^{\dm+1}.
\end{equation}

We use the hyperboloid model of the hyperbolic space and take:
\[
\bbh^\dm:= \{ x\in\bbr^{\dm+1}: -x_0^2 + x_1^2 + \dots + x_\dm^2 = -1, x_0>0 \} \subset \bbr^{\dm+1},
\]
or, written differently,
\[
\bbh^\dm = \{x\in\bbr^{\dm+1}: x^\top \eta x=-1, x_0>0\},
\]
where $\eta=\mathrm{diag}(-1, \underbrace{1, 1, \cdots, 1}_{\dm-\text{times}})$ is a matrix of size $(\dm+1)\times(\dm+1)$. 

The geodesic distance $\dist(x, y)$ on $\bbh^\dm$ between two points $x$ and $y$, is given by:
\begin{equation}
\label{eqn:geod-d}
\dist(x, y)=\cosh^{-1}(\langle x,y \rangle) = \cosh^{-1}(x_0y_0-x_1y_1-\cdots-x_\dm y_\dm).
\end{equation} 
For $x \in \bbh^\dm$ and $R>0$, we denote by $B_R(x)$ the open geodesic ball in $\bbh^\dm$ centred at $x$ of radius $R$, i.e.,
\[
B_R(x) = \{ y \in \bbh^\dm: \dist(x,y) <R\}.
\]

We use the hyperbolic polar coordinates on $\bbh^\dm$, given by:
\begin{align}\label{a-a-13}
(\theta, \xi)\in [0, \infty)\times \bbs^{\dm-1}\mapsto (\cosh\theta, \sinh\theta \, \xi)\in\bbh^\dm\subset\bbr^{\dm+1},
\end{align}
where $\bbs^{\dm-1}$ denotes the $(\dm-1)$-dimensional unit sphere embedded in $\bbr^\dm$. We denote by $v$ the vertex of $\bbh^\dm$, corresponding to $\theta =0$, i.e., $v=(1,0,\dots,0) \in  \bbr^{\dm+1}$. The vertex $v$ plays the role of the origin in the Euclidean space.

In polar coordinates, the Laplace-Beltrami operator $\Delta_{\bbh^\dm}$ can be written as:
\begin{align}
\begin{aligned}\label{a-a-14}
\Delta_{\bbh^\dm}f(\theta, \xi)&=\sinh^{1-\dm}\theta \, \partial_\theta\left(\sinh^{\dm-1}\theta\partial_\theta f\right)+\sinh^{-2}\theta\Delta_\xi f\\
&=\partial_{\theta\theta}f+(\dm-1)\coth\theta \, \partial_\theta f+\sinh^{-2}\theta\Delta_\xi f,
\end{aligned}
\end{align}
where $\Delta_\xi$ is the Laplace--Beltrami operator on $\bbs^{\dm-1}$, with respect to the variable $\xi$. Also, the canonical volume form on $\bbh^\dm$ in these coordinates is given by
\begin{equation}
\label{eqn:V-form}
\d x=\sinh^{\dm-1}\theta \, \d\theta \d\sigma_{\xi}.
\end{equation}

We denote by $\alpha(\dm)$ the volume of the $\dm$-dimensional unit ball in $\bbr^\dm$; consequently, $\dm \alpha(\dm)$ is the surface area of $\bbs^{\dm-1}$ in $\bbr^\dm$. Finally, we denote the standard Euclidean norm in $\bbr^\dm$ by $| \cdot |$, i.e.,
\[
|w|= \left( w_1^2+\cdots+w_\dm^2 \right)^{\frac{1}{2}}, \qquad \text{ for } w=(w_1,\dots,w_\dm) \in \bbr^\dm.
\]


\subsubsection{Lorentz transform}
For a fixed vector $w=(w_1,\dots,w_\dm) \in\bbr^\dm$ with $|w|<1$, consider the following matrix $B(w)$ of size $(\dm+1)\times (\dm+1)$ with indices $0, 1, \dots, \dm$:
\begin{align}\label{LB}
B(w)_{00}=\gamma,\quad B(w)_{i0}=B(w)_{0i}=-\gamma w_i,\quad B(w)_{ij}=\delta_{ij}+(\gamma-1)\frac{w_iw_j}{|w|^2},\quad \forall i, j=1, 2, \cdots, \dm,
\end{align}
where $\gamma=(1-|w|^2)^{-1/2}$. Equivalently, the matrix expression of $B(w)$ is
\[
B(w)=\begin{bmatrix}
\gamma&-\gamma w^\top\\
-\gamma w& I_\dm +(\gamma-1)\frac{ww^\top}{|w|^2}
\end{bmatrix}.
\]
We denote this matrix by $B$, since the corresponding concept is the Lorentz boost. 

\begin{lem}\label{Leme}
For a fixed $w\in\bbr^\dm$ with $|w|<1$, $B(w)$ maps $\bbh^\dm$ into $\bbh^\dm$, i.e., 
\[
y=B(w) x\in\bbh^\dm,\qquad \text{ for all } x\in\bbh^\dm\subset\bbr^{\dm+1}. 
\]
Here, $y$ is the matrix product of $B(w)\in\bbr^{(\dm+1)\times(\dm+1)}$ and $x\in\bbr^{\dm+1}$.
\end{lem}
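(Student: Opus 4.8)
The plan is to verify directly that $y := B(w)x$ satisfies the two defining conditions of $\bbh^\dm$: the quadratic constraint $y^\top \eta y = -1$ and the forward-sheet condition $y_0 > 0$. Since $x \in \bbh^\dm$ already satisfies $x^\top \eta x = -1$, the first of these reduces to showing that $B(w)$ is a Lorentz transformation, i.e.,
\[
B(w)^\top \eta\, B(w) = \eta;
\]
indeed, granting this identity, $y^\top \eta y = x^\top B(w)^\top \eta\, B(w)\, x = x^\top \eta x = -1$. (The degenerate case $w=0$ gives $B(0)=I_{\dm+1}$ and is trivial, so I assume $w \ne 0$ below.)

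To establish $B(w)^\top \eta\, B(w)=\eta$, I would use that $B(w)$ is symmetric and compute $B(w)\,\eta\, B(w)$ block by block. Writing $P := \frac{ww^\top}{|w|^2}$ for the orthogonal projection onto $\mathrm{span}\{w\}$ and $M := I_\dm + (\gamma-1)P$ for the lower-right block of $B(w)$, the computation rests on the elementary identities $Pw = w$ and $P^2 = P$, from which $Mw = \gamma w$, $w^\top M = \gamma w^\top$, and $M^2 = I_\dm + (\gamma^2-1)P$ follow. Substituting these, the $(0,0)$ entry becomes $-\gamma^2(1-|w|^2)$, the two off-diagonal blocks vanish identically, and the lower-right block equals $I_\dm + \bigl(\tfrac{\gamma^2-1}{|w|^2} - \gamma^2\bigr) ww^\top$. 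Each of these collapses to the corresponding block of $\eta$ precisely because $\gamma^2(1-|w|^2) = 1$, which is exactly the definition $\gamma = (1-|w|^2)^{-1/2}$; this single normalization is what forces all the cancellations.

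For the forward-sheet condition, I would write $x=(x_0, x')$ with $x'=(x_1,\dots,x_\dm) \in \bbr^\dm$ and read off $y_0 = \gamma(x_0 - w^\top x')$. Since $\gamma > 0$, it suffices to show $x_0 - w^\top x' > 0$. The hyperboloid constraint gives $|x'|^2 = x_0^2 - 1 < x_0^2$, hence $|x'| < x_0$, while $|w| < 1$ by hypothesis; Cauchy--Schwarz then yields
\[
w^\top x' \le |w|\,|x'| < x_0,
\]
so $y_0 > 0$. Combined with $y^\top \eta y = -1$, this shows $y \in \bbh^\dm$.

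The only genuine work is the block identity $B(w)^\top \eta\, B(w) = \eta$. Conceptually it is just the assertion that a Lorentz boost preserves the Minkowski form, but the delicate step is the lower-right block: both the vanishing of its off-diagonal contributions and its reduction to $I_\dm$ hinge on the exact value of $\gamma$, so the main obstacle is organizing the rank-one projection algebra (the identities for $P$ and $M$) so that these cancellations become transparent. The forward-sheet inequality, by contrast, is a one-line Cauchy--Schwarz estimate and poses no difficulty.
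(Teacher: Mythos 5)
Your proof is correct, and it is organized differently from the paper's. The paper verifies the quadratic constraint by brute force: it writes out $y_0$ and $y_j$ in coordinates and expands $y_0^2-y_1^2-\cdots-y_\dm^2$ term by term until it collapses to $x_0^2-x_1^2-\cdots-x_\dm^2$; the matrix identity $B(w)^\top\eta B(w)=\eta$ is then extracted \emph{afterwards}, in the main text (equation \eqref{LB-1}), as a consequence of that computation. You reverse the logical order: you prove $B(w)^\top\eta B(w)=\eta$ first, by block multiplication using the projection algebra $P^2=P$, $Mw=\gamma w$, $M^2=I_\dm+(\gamma^2-1)P$, and then read off the invariance of the quadratic form in one line. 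The content is the same Lorentz-boost computation, but your organization isolates the single normalization $\gamma^2(1-|w|^2)=1$ as the source of all cancellations and is considerably easier to check than the paper's raw expansion. You also close a small gap the paper leaves open: membership in $\bbh^\dm$ requires both $y^\top\eta y=-1$ \emph{and} $y_0>0$, and the paper's proof only establishes the former (it stops at $y_0^2-y_1^2-\cdots-y_\dm^2=1$, which places $y$ on the two-sheeted hyperboloid but not necessarily on the upper sheet). Your Cauchy--Schwarz argument $w^\top x'\le |w|\,|x'|<x_0$, giving $y_0=\gamma(x_0-w^\top x')>0$, supplies exactly the missing step.
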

\begin{proof}
By direct calculations (see Appendix \ref{sec:app.a} for details) one can show:
\[
y_0^2-y_1^2-\cdots-y_\dm^2 = x_0^2-x_1^2-\cdots-x_\dm^2,
\]
which implies the desired result.
\end{proof}

From the proof of Lemma \ref{Leme}, for any fixed $w\in\bbr^\dm$ with $|w|<1$, it holds that:
\[
x^\top \eta x=(B(w)x)^\top \eta B(w)x=x^\top(B(w)^\top \eta B(w))x, \qquad \forall~x\in\bbr^{\dm+1}.
\]
This yields
\[
x^\top(\eta-B(w)^\top \eta B(w))x=0,\qquad\forall~x\in\bbr^{\dm+1},
\]
and hence,
\begin{align}\label{LB-1}
B(w)^\top \eta B(w)=\eta.
\end{align}
Identity \eqref{LB-1} holds for all $w\in\bbr^\dm$ with $|w|<1$.

\begin{lem}\label{Lem-iso1}
For a fixed $w\in\bbr^\dm$ with $|w|<1$, the map $F_w:\bbh^\dm\to\bbh^\dm$ defined by 
\begin{equation}
\label{eqn:Fw}
F_w(x):=B(w)x,
\end{equation}
is an isometry. In particular, $F_0$ is the identity map on $\bbh^\dm$.
\end{lem}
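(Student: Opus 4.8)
The plan is to use the fact that $F_w$ is the restriction to $\bbh^\dm$ of the linear map $B(w)$, and to leverage the algebraic identity \eqref{LB-1} together with the explicit distance formula \eqref{eqn:geod-d}. Since an isometry of $\bbh^\dm$ is a diffeomorphism that preserves the Riemannian metric, I would verify three things in turn: that $F_w$ preserves the geodesic distance, that it is a bijection of $\bbh^\dm$, and that it is smooth with metric-preserving differential. The first of these is the heart of the argument; the other two are essentially bookkeeping built on Lemma \ref{Leme}.

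The key computation is preservation of the Lorentzian inner product. From \eqref{eqn:iproduct-L} and the definition of $\eta$ one reads off the relation $\langle x,y\rangle = -x^\top \eta y$, so that \eqref{LB-1} gives, for all $x,y\in\bbr^{\dm+1}$,
\[
\langle B(w)x, B(w)y\rangle = -(B(w)x)^\top \eta\, B(w)y = -x^\top\big(B(w)^\top \eta B(w)\big)y = -x^\top \eta y = \langle x,y\rangle.
\]
Restricting to $x,y\in\bbh^\dm$ and inserting this into \eqref{eqn:geod-d} yields at once $\dist(F_w(x),F_w(y)) = \cosh^{-1}\langle B(w)x,B(w)y\rangle = \cosh^{-1}\langle x,y\rangle = \dist(x,y)$, so $F_w$ preserves geodesic distances.

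For bijectivity, taking determinants in \eqref{LB-1} forces $\det B(w)^2=1$, so $B(w)$ is invertible on $\bbr^{\dm+1}$ and $F_w$ is injective; for surjectivity I would note that $B(-w)$ (which is admissible since $|-w|=|w|<1$) is the inverse of $B(w)$, as a direct multiplication shows, so that $B(w)^{-1}=B(-w)$ maps $\bbh^\dm$ into $\bbh^\dm$ by Lemma \ref{Leme} and $F_w^{-1}=F_{-w}$. Smoothness is immediate because $F_w$ is the restriction of a linear map, and its differential at every point equals $B(w)$, which by the computation above preserves $\langle\cdot,\cdot\rangle$ and hence the induced Riemannian metric on each tangent space; this confirms $F_w$ is an isometry. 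Finally, for $w=0$ one has $\gamma=1$, so the correction term in \eqref{LB} vanishes and $B(0)=I_{\dm+1}$, giving $F_0=\mathrm{id}$. I do not anticipate a genuine obstacle here, since everything rests on the already-established identity \eqref{LB-1}; the only points demanding care are the sign convention linking $\langle\cdot,\cdot\rangle$ to $\eta$, and confirming that both $B(w)$ and its inverse preserve the upper sheet $x_0>0$, which the identification $B(w)^{-1}=B(-w)$ settles cleanly via Lemma \ref{Leme}.
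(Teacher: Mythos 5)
Your proposal is correct and rests on the same mechanism as the paper's proof: identity \eqref{LB-1} shows that the differential of $F_w$, which is just $B(w)$ itself, preserves the Lorentzian inner product and hence the induced Riemannian metric on each tangent space. The additional checks you supply (distance preservation via \eqref{eqn:geod-d}, and bijectivity via $B(w)^{-1}=B(-w)$, which is indeed verified by a direct block multiplication) are correct and in fact make the argument more complete than the paper's, which only records the metric-preservation computation.
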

\begin{proof}
The fact that $F_0$ is the identity map follows immediately from $B(0) = \text{Id}$. The general result follows from a simple calculation that uses \eqref{LB-1}; see Appendix \ref{sec:app.a} for details.
\end{proof}

\begin{lem}\label{La.3}
Let $x\in\bbh^\dm$ and $w\in\bbr^\dm$ with $|w|<1$. Then, the following are equivalent: 
\begin{enumerate}
\item $F_w(1, 0, \cdots, 0)=(x_0, -x_1, \cdots, -x_\dm)$,
\item $F_w(x)=(1, 0, \cdots, 0)$, 
\item $w_i=\frac{x_i}{x_0}$, for all $1\leq i\leq \dm$.
\end{enumerate}
\end{lem}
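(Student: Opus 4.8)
The plan is to prove the three equivalences by establishing the cycle $(3) \Rightarrow (2) \Rightarrow (1) \Rightarrow (3)$, relying on explicit computation with the matrix $B(w)$ from \eqref{LB}. The central idea is that the Lorentz boost $B(w)$ is designed precisely to map the vertex $v = (1, 0, \dots, 0)$ to a prescribed point and vice versa, with the velocity parameter $w$ encoding the ratios $x_i/x_0$. Throughout, I will use that $x \in \bbh^\dm$ satisfies $x_0^2 - |(x_1,\dots,x_\dm)|^2 = 1$ with $x_0 > 0$, which guarantees $x_0 \geq 1$ and hence that $w_i = x_i/x_0$ indeed satisfies $|w| < 1$ (so the boost is well-defined).

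For $(3) \Rightarrow (2)$, I would substitute $w_i = x_i/x_0$ directly into the matrix product $F_w(x) = B(w)x$ and compute each component. For the zeroth component, using $\gamma = (1-|w|^2)^{-1/2}$ and $|w|^2 = |(x_1,\dots,x_\dm)|^2/x_0^2 = (x_0^2-1)/x_0^2$, one gets $\gamma = x_0$, and then $(B(w)x)_0 = \gamma x_0 - \gamma \sum_i w_i x_i = \gamma(x_0 - |(x_1,\dots,x_\dm)|^2/x_0) = \gamma/x_0 = 1$. For the $i$-th component I would check that the attractive and repulsive terms cancel:
\begin{align}
(B(w)x)_i = -\gamma w_i x_0 + x_i + (\gamma-1)\frac{w_i}{|w|^2}\sum_j w_j x_j = 0,
\end{align}
which follows after substituting $w_i = x_i/x_0$ and simplifying using the relation between $\gamma$, $|w|^2$ and $x_0$. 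This yields $F_w(x) = v$, establishing $(2)$.

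For $(2) \Rightarrow (1)$, I would use the fact (from \eqref{LB-1}) that $B(w)^\top \eta B(w) = \eta$, together with the structure of $B(w)$. The cleanest route is to observe that $B(w)$ is symmetric, so $B(w)^{-1} = \eta B(w) \eta$ by \eqref{LB-1}, and computing $\eta B(w) \eta$ shows it equals $B(-w)$; in other words $F_w^{-1} = F_{-w}$. Given $F_w(x) = v$, applying $F_{-w}$ yields $x = F_{-w}(v) = B(-w)v$, and reading off the components of $B(-w)v$ gives $x = (\gamma, \gamma w_1, \dots, \gamma w_\dm)$. Then computing $F_w(v) = B(w)v = (\gamma, -\gamma w_1, \dots, -\gamma w_\dm) = (x_0, -x_1, \dots, -x_\dm)$ establishes $(1)$.

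Finally, $(1) \Rightarrow (3)$ follows by reading off the component equations from $F_w(v) = B(w)v = (\gamma, -\gamma w_1, \dots, -\gamma w_\dm)$. Matching against $(x_0, -x_1, \dots, -x_\dm)$ gives $\gamma = x_0$ and $\gamma w_i = x_i$, hence $w_i = x_i/\gamma = x_i/x_0$, which is $(3)$. I expect the main obstacle to be the algebraic bookkeeping in $(3) \Rightarrow (2)$, specifically verifying the vanishing of the $i$-th component, where the term $(\gamma-1)\frac{w_i w_j}{|w|^2}$ must be handled carefully; the key simplification is the identity $\gamma = x_0$ arising from $|w|^2 = (x_0^2-1)/x_0^2$, which collapses the expressions and makes the cancellations transparent. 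Once that substitution is made, all three implications reduce to routine verification.
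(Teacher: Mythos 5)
Your proposal is correct. Both the cycle $(3)\Rightarrow(2)\Rightarrow(1)\Rightarrow(3)$ and each individual step check out: the computation $\gamma=x_0$ from $|w|^2=(x_0^2-1)/x_0^2$ is right, the cancellation in the $i$-th component of $B(w)x$ works exactly as you describe (the correction term evaluates to $(x_0-1)x_i$, which cancels $-x_0x_i+x_i$), and the identity $\eta B(w)\eta=B(-w)$ combined with \eqref{LB-1} and the symmetry of $B(w)$ does give $F_w^{-1}=F_{-w}$.

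The route differs from the paper's in one substantive way. The paper proves $1\iff3$ by computing $F_w(v)=(\gamma,-\gamma w_1,\dots,-\gamma w_\dm)$ and reading off components (as you do for $(1)\Rightarrow(3)$), but it handles the direction starting from $(2)$ head-on: it writes out the component equations of $F_w(x)=v$, deduces $w_i=\alpha x_i$ for some scalar $\alpha$, derives two expressions for $\gamma$ in terms of $\alpha$, and solves $(x_0^2-1)(1-\alpha x_0)^2=0$, with a separate case analysis for $w=0$ versus $|w|>0$. You instead sidestep that entire computation by inverting the boost: from $F_w(x)=v$ you get $x=F_{-w}(v)=(\gamma,\gamma w_1,\dots,\gamma w_\dm)$, and the components of $x$ in terms of $w$ fall out immediately. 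Your argument is shorter and avoids the quadratic equation and the case split, at the cost of needing the structural facts that $B(w)$ is symmetric and that $\eta B(w)\eta=B(-w)$ (both easy from \eqref{LB} and \eqref{LB-1}, but neither is stated in the paper, so you should record them explicitly). The paper's approach buys a uniqueness statement as a byproduct — it shows the \emph{only} $w$ with $F_w(x)=v$ is the one in $(3)$ — which your cycle also delivers, just less visibly. The only cosmetic gap is the degenerate case $x=v$ (equivalently $w=0$), where $|w|^2$ appears in a denominator; a one-line remark that $B(0)=\mathrm{Id}$ (as in Lemma \ref{Lem-iso1}) disposes of it.
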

\begin{proof}
The proof is provided in Appendix \ref{sec:app.a}.
\end{proof}


\subsubsection{Translation on $\bbh^\dm$}
In this part, we define the translation on the hyperbolic space using the isometry $F_w$ defined in \eqref{eqn:Fw}. For any $x=(x_0, x_1, \cdots, x_\dm)\in\bbh^\dm$, we define a $\dm$-dimensional vector $\hat{x}$ by
\[
\hat{x}:=\left(\frac{x_1}{x_0},\cdots, \frac{x_\dm}{x_0}\right).
\]
From the definition of $\hat{x}$ and Lemma \ref{La.3}, we have that
\[
F_{\hat{x}}(x)=v \qquad\text{ and }\qquad F_{\hat{x}}(v)=(x_0, -x_1, \cdots, -x_\dm).
\]

\begin{defn}
\label{defn:+prime}
We define a binary operation $+':\bbh^\dm\times\bbh^\dm\to\bbh^\dm$ as follows:
\[
x+'y:=F_{-\hat{y}} (x) \in\bbh^\dm, \qquad \forall~x, y\in \bbh^\dm.
\]
\end{defn}

Since $F_w$ is an isometry for all $w\in\bbr^\dm$ with $|w|<1$, we know that the map $x\mapsto x+'y$ is also an isometry for all $y\in\bbh^\dm$. The following lemma provides the coordinate expression of $x+'y$.

\begin{lem}\label{Lemexf}
Let $x, y\in\bbh^\dm$, then we have
\begin{align}
\begin{aligned}\label{a-a-10}
&(x+'y)_0=x_0y_0+x_1y_1+\cdots+x_\dm y_\dm,\\
&(x+'y)_j=x_0y_j+x_j+\frac{y_j}{y_0+1}(x_1y_1+\cdots+x_\dm y_\dm), \qquad\forall~1\leq j\leq \dm.
\end{aligned}
\end{align}
\end{lem}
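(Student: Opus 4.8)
The plan is to compute the right-hand side $F_{-\hat y}(x)=B(-\hat y)\,x$ directly from the explicit matrix entries in \eqref{LB} and match the result with \eqref{a-a-10}. Set $w:=-\hat y=(-y_1/y_0,\dots,-y_\dm/y_0)$, so that $w_i=-y_i/y_0$ for each $i$, and recall $x+'y=B(w)x$.

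First I would simplify the scalar $\gamma=(1-|w|^2)^{-1/2}$ attached to this particular $w$. Since $y\in\bbh^\dm$ satisfies $-y_0^2+y_1^2+\cdots+y_\dm^2=-1$, one has $y_1^2+\cdots+y_\dm^2=y_0^2-1$, and therefore
\[
|w|^2=\frac{y_1^2+\cdots+y_\dm^2}{y_0^2}=1-\frac{1}{y_0^2},\qquad 1-|w|^2=\frac{1}{y_0^2}.
\]
Because $y_0>0$, this yields the clean identity $\gamma=y_0$, the key simplification that makes the whole computation tractable. Using $B(w)_{00}=\gamma$ and $B(w)_{0i}=-\gamma w_i$, the zeroth component becomes
\[
(B(w)x)_0=\gamma x_0-\gamma\sum_{i=1}^\dm w_i x_i=y_0 x_0+\sum_{i=1}^\dm y_i x_i,
\]
which is exactly the first line of \eqref{a-a-10}.

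For $1\le j\le \dm$ I would expand, using the split $B(w)_{ji}=\delta_{ji}+(\gamma-1)w_jw_i/|w|^2$ together with $B(w)_{j0}=-\gamma w_j$,
\[
(B(w)x)_j=-\gamma w_j x_0+x_j+(\gamma-1)\frac{w_j}{|w|^2}\sum_{i=1}^\dm w_i x_i.
\]
The first term gives $-\gamma w_j x_0=y_j x_0$ after substituting $\gamma=y_0$ and $w_j=-y_j/y_0$.

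The main (and really the only) obstacle is the algebraic bookkeeping in the last cross term. I would substitute $w_j=-y_j/y_0$, $\sum_i w_i x_i=-\frac{1}{y_0}\sum_i y_i x_i$, $\gamma-1=y_0-1$, and $|w|^2=(y_0^2-1)/y_0^2$, and then use the factorization $y_0^2-1=(y_0-1)(y_0+1)$ to cancel the $(y_0-1)$ factors and reduce the coefficient to $1/(y_0+1)$. After this the cross term collapses to $\frac{y_j}{y_0+1}(x_1y_1+\cdots+x_\dm y_\dm)$, and combined with $x_0 y_j$ and $x_j$ it reproduces the second line of \eqref{a-a-10}. No genuine difficulty remains beyond tracking the two sign flips (one from $w_i=-y_i/y_0$, one from $\sum_i w_i x_i$) that conspire to produce the positive coefficient.
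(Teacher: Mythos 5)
Your proposal is correct and follows essentially the same route as the paper's own proof in Appendix \ref{sec:app.a}: both compute $B(-\hat y)x$ entrywise from \eqref{LB}, use the hyperboloid constraint to get $\gamma(-\hat y)=y_0$, and simplify the cross term via $|{-\hat y}|^2=(y_0^2-1)/y_0^2$ and the factorization $y_0^2-1=(y_0-1)(y_0+1)$. The algebraic details you outline (including the two cancelling sign flips) all check out.
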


\begin{proof}
The proof follows from direct calculations, see Appendix \ref{sec:app.a}.
\end{proof}
\begin{remark}
We can also express $(x+'y)_j$ for any $1\leq j\leq \dm$, as 
\begin{align}\label{a-a-11}
(x+'y)_j=x_0y_j+x_j+\frac{y_j}{y_0+1}((x+'y)_0-x_0y_0)=x_j+\frac{y_j}{y_0+1}((x+'y)_0+x_0).
\end{align}
\end{remark}

We investigate now the algebraic structure of the operator $+'$. 
\medskip

\noindent{\em (1) Identity of $+'$.} Recall that $v=(1, \underbrace{0, \cdots, 0}_{\dm-\text{times}})$ denotes the vertex of $\bbh^\dm$. Since $\hat{v}$ is the $\dm$-dimensional zero vector, we can easily find that
\[
x+'v=F_0(x)=x,
\]
since $F_0$ is the identity map on $\bbh^\dm$. Now, we calculate $v+'x$. From the definition of $+'$, we have
\[
(v+'x)_0=v_0x_0+v_1x_1+\cdots+v_\dm x_\dm=x_0
\]
and
\[
(v+'x)_j=v_0x_j+v_j+\frac{x_j}{x_0+1}(v_1x_1+\cdots+v_\dm x_\dm)=x_j, \qquad  \text{ for } j =1,\dots,\dm.
\]
This implies that
\[
v+'x=x.
\]
We conclude that $v$ is an identity element for $+'$. The uniqueness of identities can be shown easily, so $v$ is the unique identity element of $+'$.\\

\noindent{\em (2) Inverse of $+'$.} We want to find the inverse $-x$ of $x$, which satisfies
\[
x+'(-x)=(-x)+'x=v.
\]
For  $x=(x_0, x_1, \cdots, x_\dm)$, set:
\begin{equation}
\label{eqn:inverse}
-x:=(x_0, -x_1, -x_2,\cdots, -x_\dm).
\end{equation}
It is easy to check that $-x$ defined above is indeed the inverse of $x$. In polar coordinates, if $(\theta, \xi)\in [0, \infty)\times \bbs^{\dm-1}$ are the coordinates of $x$, then $-x$ has coordinates $(\theta, -\xi)$. So one can see that the inverse operator is quite natural. We will prove the uniqueness of the inverse later (see Corollary \ref{crly:inverse-u}).

Using the definition \eqref{eqn:inverse} of the inverse, we define the operation $-'$ by
\begin{equation}
\label{eqn:-prime}
x-'y:=x+'(-y), \qquad\forall~x, y\in\bbh^\dm.
\end{equation}
We have the following simple lemma which will be used in further calculations.
\begin{lem}\label{msl}
Let $x$ and $y$ be two points on $\bbh^\dm$. Then we have the following relation:
\[
-(x+'y)=(-x)+'(-y).
\]
\end{lem}
\begin{proof}
The proof is provided in Appendix \ref{sec:app.a}.
\end{proof}

\noindent{\em (3) Non-commutativity of $+'$.} Using \eqref{a-a-10}, we can show easily that $+'$ is not commutative.\\

\noindent{\em (4) Non-associativity of $+'$.} By direct (but tedious) calculations, one can check that in general, $((x+'y)+'z)_0\neq (x+'(y+'z))_0$. Hence, the operation $+'$ is not associative.\\

To prove the uniqueness of the inverse, we provide the following lemma.
\begin{lem}\label{La.8}
For any $x, y\in \bbh^\dm$, we have
\[
x=(x+'y)-'y.
\]
\end{lem}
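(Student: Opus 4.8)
The goal is to prove the identity $x = (x +' y) -' y$ for all $x, y \in \bbh^\dm$. Recall that $-' y = +' (-y)$, so the claim is equivalent to showing $(x +' y) +' (-y) = x$. The plan is to unwind the definition of $+'$ in terms of the isometries $F_w$ and reduce the whole identity to a composition of two Lorentz boosts acting on $x$.

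First I would translate everything into the $F_w$ language. By Definition \ref{defn:+prime}, $x +' y = F_{-\hat{y}}(x)$. For the second operation, note that $-' y = +'(-y)$, and the hat of the inverse is easy to compute: since $-y = (y_0, -y_1, \dots, -y_\dm)$, we have $\widehat{-y} = -\hat{y}$, hence $-\widehat{-y} = \hat{y}$. Therefore $(x +' y) -' y = F_{\widehat{y}}\bigl(F_{-\hat{y}}(x)\bigr) = \bigl(B(\hat{y})\, B(-\hat{y})\bigr) x$, using $F_w(z) = B(w) z$ from \eqref{eqn:Fw}. So the identity collapses to the single matrix statement
\[
B(\hat{y})\, B(-\hat{y}) = \mathrm{Id}.
\]

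The main work is then the purely algebraic verification that $B(-w) = B(w)^{-1}$ for any admissible $w$ (here with $w = \hat{y}$, and $|\hat{y}| < 1$ since $\hat y$ is the hat of a point of $\bbh^\dm$). I would do this directly from the block form
\[
B(w)=\begin{bmatrix} \gamma & -\gamma w^\top \\ -\gamma w & I_\dm +(\gamma-1)\frac{ww^\top}{|w|^2} \end{bmatrix},
\]
observing that $\gamma$ depends only on $|w|^2$, so it is unchanged under $w \mapsto -w$, giving
\[
B(-w)=\begin{bmatrix} \gamma & \gamma w^\top \\ \gamma w & I_\dm +(\gamma-1)\frac{ww^\top}{|w|^2} \end{bmatrix}.
\]
Multiplying the two blockwise and using the defining relation $\gamma^2(1 - |w|^2) = 1$ (equivalently $\gamma^2 - \gamma^2|w|^2 = 1$) for the scalar entry, and $\gamma^2 - 2\gamma(\gamma-1) - (\gamma-1)^2$-type cancellations together with $\tfrac{ww^\top ww^\top}{|w|^4} = \tfrac{ww^\top}{|w|^2}$ for the lower-right block, should yield the identity matrix. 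Alternatively, and more cleanly, I would invoke \eqref{LB-1}: since $B(w)^\top \eta B(w) = \eta$, we get $B(w)^{-1} = \eta^{-1} B(w)^\top \eta$, and a short computation (using $\eta = \eta^{-1}$) shows this equals $B(-w)$; this reduces the verification to checking signs of off-diagonal blocks, which flip exactly as conjugation by $\eta = \mathrm{diag}(-1, 1, \dots, 1)$ dictates.

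The step I expect to be the main (though modest) obstacle is the bookkeeping around the inverse: confirming that $\widehat{-y} = -\hat y$ and that the sign in $-' y = +'(-y)$ feeds through correctly so that the two boosts are genuinely $B(\hat y)$ and $B(-\hat y)$ rather than some mismatched pair. Once the reduction to $B(\hat y) B(-\hat y) = \mathrm{Id}$ is secured, the remainder is a routine Lorentz-boost computation that can be relegated to the Appendix, as the paper does with its other $F_w$ identities. I would present the $F_w$ reduction in the main text and defer the matrix multiplication to Appendix \ref{sec:app.a}, consistent with the treatment of Lemmas \ref{Leme}--\ref{Lemexf}.
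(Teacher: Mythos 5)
Your proof is correct, but it takes a genuinely different route from the paper's. The paper proves the lemma by brute-force coordinates: it first derives the general formula for $((x+'y)+'z)_0$ from \eqref{a-a-10}, substitutes $z=-y$, invokes the hyperboloid constraint $y_0^2-y_1^2-\cdots-y_\dm^2=1$ to collapse the zeroth component to $x_0$, and then recovers the remaining components from the identity \eqref{a-a-11}. You instead lift everything to the boost matrices: using $\widehat{-y}=-\hat{y}$ (which is immediate from \eqref{eqn:inverse}) and Definition \ref{defn:+prime}, the claim reduces to $B(\hat{y})B(-\hat{y})=\mathrm{Id}$, i.e., $F_{-w}=F_w^{-1}$. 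That matrix identity does check out — the blockwise product works using $\gamma^2(1-|w|^2)=1$ and $\gamma^2-1=\gamma^2|w|^2$, and your slicker variant via \eqref{LB-1} is also valid since $B(w)$ is symmetric, so $B(w)^{-1}=\eta B(w)\eta=B(-w)$. Your approach is more structural: it isolates the reusable fact that the boosts $\{F_w\}$ are closed under inversion with $F_w^{-1}=F_{-w}$, whereas the paper's computation has the side benefit of producing the explicit formula \eqref{a-a-12} for $((x+'y)+'z)_0$, which it reuses to observe non-associativity of $+'$. The only loose end in your write-up is the degenerate case $y=v$ (so $\hat{y}=0$ and the block $\frac{ww^\top}{|w|^2}$ is formally $0/0$); this is harmless since $B(0)=\mathrm{Id}$ and the identity is trivial there, but it is worth a sentence.
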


\begin{proof}
The proof is provided in Appendix \ref{sec:app.a}.
\end{proof}
From the previous lemma, we have the uniqueness of the inverse for $+'$.
\begin{crly}
\label{crly:inverse-u}
Let $x, y, z\in\bbh^\dm$ that satisfy
\[
x+'y=v,\qquad\text{and}\qquad x+'z=v.
\]
Then, we have $y=z$.
\end{crly}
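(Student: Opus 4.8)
The plan is to derive the result directly from Lemma~\ref{La.8}, which states that $x = (x +' y) -' y$ for all $x, y \in \bbh^\dm$. The key observation is that the hypotheses $x +' y = v$ and $x +' z = v$ both assert that $x$ has a right inverse, and Lemma~\ref{La.8} tells us how to recover $x$ from such a right inverse by applying $-' y$ (respectively $-' z$). The strategy is therefore to extract both $y$ and $z$ as expressions involving $x$ alone, and then show these expressions coincide.

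First I would apply Lemma~\ref{La.8} with the roles suitably arranged. From $x +' y = v$, I want to solve for $y$ in terms of $x$. Applying Lemma~\ref{La.8} in the form $y = (y +' x) -' x$ is not immediately the hypothesis we have, so the cleaner route is to use Lemma~\ref{La.8} with its stated variables: since $x = (x +' y) -' y = v -' y$, we obtain $x = v -' y = v +' (-y) = -y$, using the definition \eqref{eqn:-prime} of $-'$ and the fact (established in the discussion of the identity element) that $v +' w = w$ for all $w \in \bbh^\dm$. Hence $-y = x$, and applying the inverse operation \eqref{eqn:inverse} (which is clearly an involution, as $-(-y) = y$ is immediate from the coordinate formula) gives $y = -x$. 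The identical computation applied to $x +' z = v$ yields $z = -x$.

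Combining the two conclusions, $y = -x = z$, which is exactly the claim. The only genuinely substantive step is the reduction $x = v -' y \Rightarrow x = -y$, which relies on two facts already available in the excerpt: that $v$ is a left identity for $+'$, and the explicit definition of $-'$ via the inverse \eqref{eqn:inverse}. I expect no real obstacle here, since Lemma~\ref{La.8} does essentially all the work; the main thing to be careful about is bookkeeping, namely invoking Lemma~\ref{La.8} with the variables in precisely the stated order (so that $x = (x +' y) -' y$ specializes correctly to $x = v -' y$) rather than an unproven commuted version. An alternative, equally short route would be to apply the $-' y$ operation to both sides of $x +' y = v$ directly and cite Lemma~\ref{La.8} to simplify the left-hand side, but the argument above keeps everything in terms of the inverse and avoids needing any cancellation law beyond what Lemma~\ref{La.8} provides.
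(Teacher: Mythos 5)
Your proposal is correct and follows the paper's own argument: both apply Lemma~\ref{La.8} to each hypothesis to obtain $v -' y = x = v -' z$ and then conclude $y = z$. You simply make explicit the final step (the paper says the relation ``directly yields'' $y=z$) by unwinding $v -' y = -y$ via the left-identity property and the involutivity of the inverse, which is a fair and correct reading of what the paper leaves implicit.
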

\begin{proof}
From Lemma \ref{La.8}, we have
\[
(x+'y)-'y=x,\qquad  (x+'z)-'z=x.
\]
If we use $x+'y=x+'z=v$, we get
\[
v-'y=x,\qquad v-'z=x.
\]
This relation directly yields $y=z$.
\end{proof}

Note that one can express the geodesic distance on $\bbh^\dm$ using $-'$. Let $x, y\in \bbh^\dm$, $x=(x_0, x_1, \cdots, x_\dm)$ and $y=(y_0, y_1, \cdots, y_\dm)$. Then we have
\begin{equation}
\label{eqn:xmy-z}
(x-'y)_0=x_0y_0-x_1y_1-\cdots-x_\dm y_\dm,
\end{equation}
and by \eqref{eqn:geod-d},
\begin{equation}
\label{eqn:zcomp-dist}
(x-'y)_0=\cosh\Big(\dist(x, y)\Big).
\end{equation}


\subsubsection{Coordinate grid on $\bbh^\dm$}
In this part, we define a coordinate grid on $\bbh^\dm$. In the Euclidean space, the coordinate grid is set by the hyperplanes $x_i=\text{const.}$, for all coordinates $x_i$. However, we should be careful when we define this concept on $\bbh^\dm$.

For a fixed $k\in\{1, 2, \cdots, \dm\}$, we define the hypersurface:
\[
P_k(0):=\{x\in\bbh^\dm: x_k=0\}.
\]
For $\ta \in\bbr$, define
\begin{equation}
\label{eqn:uk}
u_k(\ta):=\left(\cosh \ta, 0, \cdots,0, \sinh \ta,0, \cdots, 0\right)\in\bbh^\dm,
\end{equation}
where $\cosh \ta$ is the $0^{th}$ index and $\sinh \ta$ is the $k^{th}$ index. Then consider translates of $P_k(0)$ by $u_k(\ta)$:
\begin{equation}
\label{eqn:translation-k}
P_k(\ta):=P_k(0)+'u_k(\ta),\qquad \ta \in\bbr.
\end{equation}

We refer to Figure \ref{Fig0} for an illustration of $P_k(0)$ and some of its translates for the $2$-dimensional hyperbolic space. 
\begin{figure}[thb]
 \begin{center}
 \begin{tabular}{cc}
 \hspace{-1.8cm}  \includegraphics[width=0.68\textwidth]{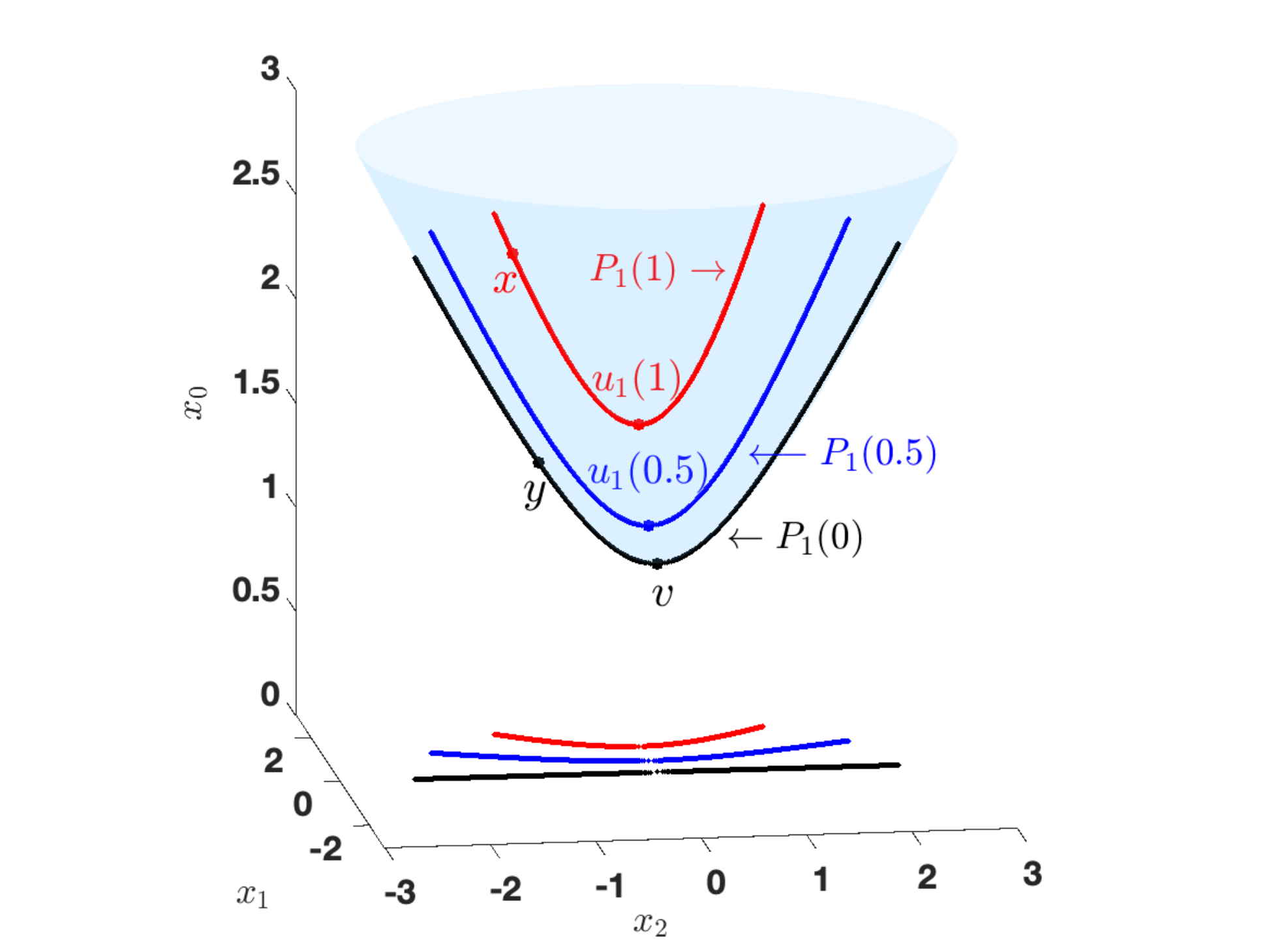}  & 
 \hspace{-1.8cm} \includegraphics[width=0.52\textwidth]{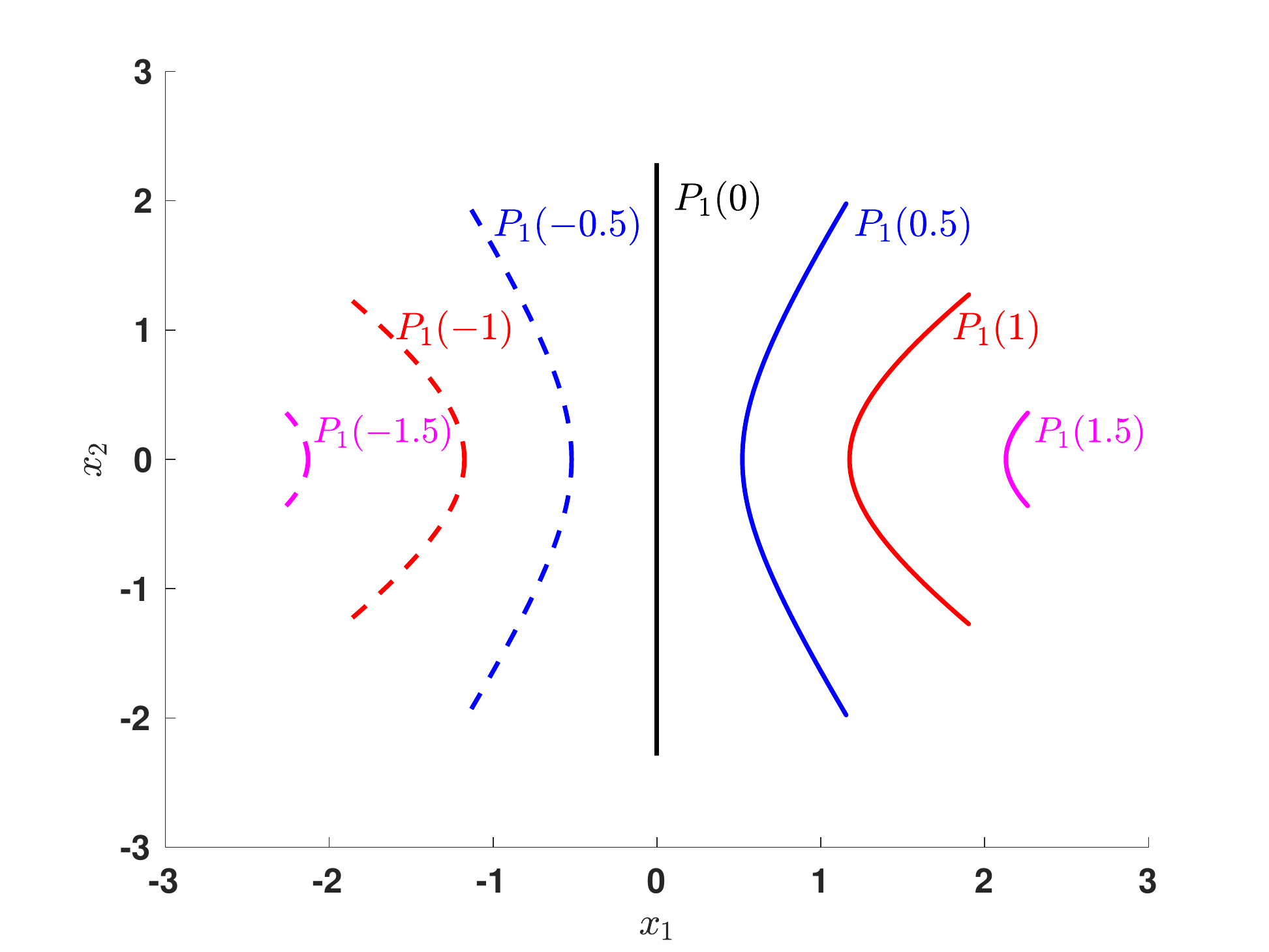}  \\
 (a) & (b) 
 \end{tabular}
 \begin{center}
 \end{center}
\caption{Illustration of the translation \eqref{eqn:translation-k} on the $2$-dimensional hyperbolic space $\bbh^2$. (a) The curve $P_1(0)$ and its translates $P_1(\ta)$ for $\ta=0.5$ and $\ta=1$. The points $u_1(\ta)$ are also indicated on the plot. (b) Projections of $P_1(\ta)$ on the $x_1x_2$-plane for various values of $\ta$.}
\label{Fig0}
\end{center}
\end{figure}

\begin{lem}\label{l3.2}
Let $x\in P_k(0)$ and $\ta, \sa \in\bbr$, then we have
\[
(x+'u_k(\ta))+'u_k(\sa)=x+'u_k(\ta+\sa).
\]
\end{lem}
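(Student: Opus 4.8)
The plan is to reduce the claimed identity to the composition law for Lorentz boosts acting along a single coordinate axis. First I would identify the isometry $x\mapsto x+'u_k(\ta)$ explicitly as a matrix. Since $u_k(\ta)=(\cosh\ta,0,\dots,\sinh\ta,\dots,0)$ carries $\sinh\ta$ in the $k$-th slot, its associated vector is $\hat{u_k(\ta)} = \tanh\ta\,e_k$, where $e_k\in\bbr^\dm$ is the $k$-th standard unit vector. Substituting $w=-\hat{u_k(\ta)}=-\tanh\ta\,e_k$ into the definition \eqref{LB} of $B(w)$ and using $\gamma=(1-\tanh^2\ta)^{-1/2}=\cosh\ta$, the rank-one spatial correction $(\gamma-1)\frac{ww^\top}{|w|^2}$ collapses to a single entry at position $(k,k)$, so $B(-\hat{u_k(\ta)})$ becomes a boost matrix $L(\ta)$ in the $(0,k)$-plane whose only non-trivial entries are
\[
L(\ta)_{00}=L(\ta)_{kk}=\cosh\ta,\qquad L(\ta)_{0k}=L(\ta)_{k0}=\sinh\ta,
\]
with $L(\ta)_{ij}=\delta_{ij}$ for all remaining indices.

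Next, by the definition of $+'$ and of $F_w$ in \eqref{eqn:Fw}, the operation $x+'u_k(\ta)=F_{-\hat{u_k(\ta)}}(x)=L(\ta)\,x$ is simply left multiplication by the fixed matrix $L(\ta)$, independently of the point it acts on. Hence the left-hand side unfolds as a matrix product,
\[
(x+'u_k(\ta))+'u_k(\sa)=L(\sa)\bigl(L(\ta)\,x\bigr)=L(\sa)L(\ta)\,x,
\]
and the whole statement reduces to the matrix identity $L(\sa)L(\ta)=L(\ta+\sa)$. I would verify this by multiplying the two non-trivial $2\times 2$ blocks sitting in the $(0,k)$-plane and invoking the hyperbolic angle-addition formulas $\cosh(\ta+\sa)=\cosh\ta\cosh\sa+\sinh\ta\sinh\sa$ and $\sinh(\ta+\sa)=\sinh\ta\cosh\sa+\cosh\ta\sinh\sa$; the remaining rows and columns are the identity on both sides. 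This yields $L(\sa)L(\ta)\,x=L(\ta+\sa)\,x=x+'u_k(\ta+\sa)$, which is the desired equality.

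The computation is entirely routine, the only mild care being the evaluation of $B(-\hat{u_k(\ta)})$ from \eqref{LB} and checking that the spatial correction reduces to the single diagonal entry described above; I expect no genuine obstacle. It is worth noting that this matrix argument never uses the hypothesis $x\in P_k(0)$: since $B(-\hat{u_k(\ta)})$ depends only on $u_k(\ta)$ and not on the point it is applied to, the identity in fact holds for every $x\in\bbh^\dm$. An alternative, more laborious route avoids matrices and substitutes $y=u_k(\ta)$ into the coordinate formula \eqref{a-a-10} twice; there the condition $x_k=0$ is precisely what makes the intermediate expressions collapse, but the boost-composition viewpoint bypasses this bookkeeping entirely.
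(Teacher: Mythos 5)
Your proof is correct, and it takes a genuinely cleaner route than the paper's. The paper proves this lemma by a direct coordinate computation: it first derives the component formulas for $x+'u_k(\ta)$ from \eqref{a-a-10} (the analogue of your matrix $L(\ta)$ written entry by entry), then substitutes twice and collapses the result with the hyperbolic angle-addition formulas. Your argument packages the same computation conceptually: you observe that $x\mapsto x+'u_k(\ta)=B(-\hat{u_k(\ta)})\,x$ is left multiplication by a fixed boost matrix $L(\ta)$ in the $(0,k)$-plane, so the entire lemma reduces to the one-parameter-group identity $L(\sa)L(\ta)=L(\ta+\sa)$, verified on a single $2\times 2$ block. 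The hyperbolic addition formulas do the same work in both proofs, but your framing makes it immediately visible that the map is independent of the point it acts on, and hence that the hypothesis $x\in P_k(0)$ is superfluous -- a fact the paper's proof also implicitly contains (its intermediate formula \eqref{c-2} is valid for arbitrary $x$, and the condition $x_k=0$ is never actually invoked afterwards), but does not remark on. Your closing comment that the coordinate route genuinely needs $x_k=0$ to make things collapse is therefore slightly off, but it concerns only the alternative you chose not to pursue and does not affect the validity of your argument. The only point of care, which you handle, is evaluating $B(-\tanh\ta\,e_k)$ from \eqref{LB}: $\gamma=\cosh\ta$ and the rank-one correction $(\gamma-1)ww^\top/|w|^2$ reduces to the single $(k,k)$ entry $\cosh\ta-1$ (with the degenerate case $\ta=0$ covered by $B(0)=\mathrm{Id}$).
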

\begin{proof}
The proof is provided in Appendix \ref{sec:app.a}.
\end{proof}

\begin{rmk}
(1) If we put $x=v$, we have
\begin{align*}
u_k(\ta)+'u_k(\sa)=u_k(\ta+\sa), \qquad\forall~ \ta, \sa \in\bbr.
\end{align*}
(2) Since $u_k(0)=v$, we also have
\[
u_k(\ta)=-u_k(-\ta), \qquad\forall~\ta \in\bbr.
\]
\end{rmk}

The following lemma is key for the concept of coordinate grid on $\bbh^\dm$. In particular, it shows that the family \eqref{eqn:translation-k} of translates of $P_k(0)$ covers the entire $\bbh^\dm$; this is similar to how translating a hyperplane $x_i = 0$ along the $x_i$-axis covers the Euclidean space.

\begin{lem}\label{l3.4}
For any $x\in\bbh^\dm$ there exists a unique pair $(\ta, y)\in\bbr\times P_k(0)$ such that
\begin{align}\label{decomp}
x=y+'u_k(\ta).
\end{align}
Furthermore, $\ta$ and $y$ can be expressed as
\begin{equation}
\label{eqn:ay}
\ta =\tanh^{-1}\left(\frac{x_k}{x_0}\right),\qquad y=x-'u_k(\ta).
\end{equation}
\end{lem}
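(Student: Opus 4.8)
The plan is to reduce this two-parameter decomposition to a single scalar equation. I first observe that, by Lemma~\ref{La.8}, once $\ta$ is fixed the companion $y$ in a decomposition $x = y +' u_k(\ta)$ is forced: applying Lemma~\ref{La.8} with the roles played by $y$ and $u_k(\ta)$ gives $y = (y +' u_k(\ta)) -' u_k(\ta) = x -' u_k(\ta)$. Thus solving $x = y +' u_k(\ta)$ subject to $y \in P_k(0)$ is equivalent to asking that $y := x -' u_k(\ta)$ satisfy the single scalar constraint $y_k = 0$; membership $y \in \bbh^\dm$ is automatic because $+'$ maps $\bbh^\dm$ into itself.

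The heart of the argument is therefore the explicit computation of $(x -' u_k(\ta))_k$. Here I would use remark~(2) following Lemma~\ref{l3.2}, namely $-u_k(\ta) = u_k(-\ta)$, to rewrite $x -' u_k(\ta) = x +' u_k(-\ta)$, and then apply the coordinate formula \eqref{a-a-10} with second argument $u_k(-\ta) = (\cosh\ta, 0, \dots, -\sinh\ta, \dots, 0)$ (only the $0$-th and $k$-th entries nonzero). The inner sum in \eqref{a-a-10} collapses to the single term $x_k(-\sinh\ta)$, and after simplifying with the identity $\sinh^2\ta/(\cosh\ta + 1) = \cosh\ta - 1$ I expect to obtain
\[
(x -' u_k(\ta))_k = x_k \cosh\ta - x_0 \sinh\ta.
\]
Setting this equal to zero yields $\tanh\ta = x_k / x_0$, which is exactly the claimed formula $\ta = \tanh^{-1}(x_k/x_0)$.

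It then remains to justify solvability and uniqueness, both of which follow from the behaviour of $\tanh$. For existence, the hyperboloid constraint $x_0^2 = 1 + x_1^2 + \cdots + x_\dm^2$ forces $x_0 > |x_k|$, so $x_k/x_0 \in (-1,1)$ lies in the range of $\tanh$ and $\ta := \tanh^{-1}(x_k/x_0)$ is a well-defined real number; the associated $y := x -' u_k(\ta)$ then satisfies $y_k = 0$, hence $y \in P_k(0)$, while $y +' u_k(\ta) = x$ follows from Lemma~\ref{La.8} applied with second argument $-u_k(\ta) = u_k(-\ta)$. For uniqueness, any admissible pair must obey $y = x -' u_k(\ta)$ and hence $\tanh\ta = x_k/x_0$; since $\tanh$ is injective on $\bbr$ this pins down $\ta$, and $y$ is then determined.

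The only genuinely computational step is the evaluation of $(x -' u_k(\ta))_k$ through \eqref{a-a-10}; I anticipate no conceptual obstacle, but some care is needed in handling the cross term that produces the $\sinh^2\ta/(\cosh\ta+1)$ factor, and in verifying that the hyperboloid constraint indeed guarantees $|x_k/x_0| < 1$ so that the decomposition exists for \emph{every} $x \in \bbh^\dm$.
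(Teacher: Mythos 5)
Your proof is correct, and its computational core --- evaluating $(x-'u_k(\ta))_k = x_k\cosh\ta - x_0\sinh\ta$ via \eqref{a-a-10} together with the identity $\sinh^2\ta/(\cosh\ta+1)=\cosh\ta-1$, then solving $\tanh\ta = x_k/x_0$ --- is exactly the computation in the paper's existence argument, including the final appeal to Lemma~\ref{La.8} to confirm $y+'u_k(\ta)=x$. The only genuine difference is in how uniqueness is organized: you invoke Lemma~\ref{La.8} up front to force $y = x -' u_k(\ta)$ for \emph{any} admissible pair, so that the constraint $y_k=0$ becomes a single scalar equation and uniqueness of $\ta$ follows from injectivity of $\tanh$; the paper instead supposes two decompositions $y+'u_k(\ta)=z+'u_k(\sa)$ and uses Lemma~\ref{l3.2} (which requires $y,z\in P_k(0)$) to deduce $y=z+'u_k(\sa-\ta)$ and hence $\ta=\sa$ from the coordinate formula. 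Your route is slightly more economical, avoids Lemma~\ref{l3.2} entirely in the uniqueness step, and also makes explicit a point the paper leaves implicit, namely that the hyperboloid constraint $x_0^2=1+x_1^2+\cdots+x_\dm^2$ guarantees $|x_k/x_0|<1$ so that $\tanh^{-1}(x_k/x_0)$ is defined for every $x\in\bbh^\dm$.
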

\begin{proof}
The proof is provided in Appendix \ref{sec:app.a}.
\end{proof}

To illustrate the decomposition \eqref{decomp}, we show in Figure \ref{Fig0}(a) a point $x \in P_1(1) \subset \bbh^2$ and its corresponding $y \in P_1(0)$ such that $x = y+' u_1(1)$. In summary, any point $x\in \bbh^\dm$ lies on the hypersurface $P_k(\ta)$ with $\ta$ given by \eqref{eqn:ay}, where $P_k(\ta)$ is the translation of $P_k(0)$ by $u_k(\ta)$. One can think of $u_k(\ta)$ as the $k$-th coordinate of $x$. Given the uniqueness of the decomposition
\eqref{decomp}-\eqref{eqn:ay}, we will be using in the sequel the map $\pi_k :\bbh^\dm\to\bbr$ defined by 
\begin{equation}
\label{eqn:pik}
\pi_k(x)= \tanh^{-1}\left(\frac{x_k}{x_0}\right).
\end{equation}

\begin{crly}\label{c3.5}
Let $x\in\bbh^\dm$ with $x\in\pi_k^{-1}(\ta)$ for some $\ta \in\bbr$. Then we have
\[
x+'u_k(\sa)\in\pi_k^{-1}(\ta+\sa),\qquad\text{or equivalently,}\quad \pi_k(x+'u_k(\sa))=\ta+\sa.
\]
\end{crly}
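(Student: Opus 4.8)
The plan is to leverage the unique decomposition from Lemma \ref{l3.4} together with the additivity of the translation parameter established in Lemma \ref{l3.2}, rather than to unwind the coordinate formula \eqref{eqn:pik} directly. The point is that $\pi_k$ is, by construction, precisely the translation parameter attached to a point by the decomposition of Lemma \ref{l3.4}, so the corollary should reduce to a clean cancellation of two successive $u_k$-translations.

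First I would use Lemma \ref{l3.4} to write $x$ in its unique decomposition relative to $P_k(0)$. Since $\pi_k(x) = \ta$ by hypothesis, this decomposition reads $x = y +' u_k(\ta)$ with $y := x -' u_k(\ta) \in P_k(0)$. Next I would apply the translation by $u_k(\sa)$ and group the operations as $x +' u_k(\sa) = (y +' u_k(\ta)) +' u_k(\sa)$. Because $y \in P_k(0)$, Lemma \ref{l3.2} applies verbatim and collapses the two successive translations into one, giving $x +' u_k(\sa) = y +' u_k(\ta + \sa)$, again with the same $y \in P_k(0)$. Finally, reading this as the decomposition of the point $z := x +' u_k(\sa)$ relative to $P_k(0)$, the uniqueness clause of Lemma \ref{l3.4} forces $\pi_k(z) = \ta + \sa$, which is exactly the claim $x +' u_k(\sa) \in \pi_k^{-1}(\ta + \sa)$.

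The step requiring the most care is the invocation of Lemma \ref{l3.2}: its hypothesis demands that the base point lie on $P_k(0)$, so it is essential that the $y$ produced by Lemma \ref{l3.4} genuinely satisfies $y \in P_k(0)$, which it does, being the $P_k(0)$-component of that lemma's decomposition. I do not expect a serious obstacle beyond this bookkeeping, since no associativity of $+'$ is being assumed; the cancellation of the two $u_k$-translations is exactly the special structure that Lemma \ref{l3.2} isolates. As an alternative one could substitute the coordinate formulas \eqref{a-a-10} for the $0$-th and $k$-th components of $x +' u_k(\sa)$ into the definition \eqref{eqn:pik} and simplify; after using $\sinh^2\sa = (\cosh\sa - 1)(\cosh\sa + 1)$ together with the hyperbolic addition formulas, the relevant ratio reduces to $\tanh(\ta + \sa)$, so that $\pi_k(x +' u_k(\sa)) = \ta + \sa$. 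This is routine but computational, and the structural argument above is cleaner.
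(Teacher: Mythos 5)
Your argument is exactly the paper's proof: decompose $x = y +' u_k(\ta)$ with $y \in P_k(0)$ via Lemma \ref{l3.4}, collapse the two translations with Lemma \ref{l3.2}, and read off the new parameter from the (unique) decomposition. The proposal is correct, and your explicit appeal to the uniqueness clause of Lemma \ref{l3.4} in the final step is a slightly more careful rendering of what the paper leaves implicit.
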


\begin{proof}
From Lemma \ref{l3.4}, there exists $y\in P_k(0)$ such that
\[
x=y+'u_k(\ta).
\]
Then, by Lemma \ref{l3.2}, we get
\[
x+'u_k(\sa)=(y+'u_k(\ta))+'u_k(\sa)=y+'u_k(\ta+\sa).
\]
Therefore, $x+'u_k(\sa)\in\pi_k^{-1}(\ta+\sa)$.
\end{proof}

We conclude this section with a simple inequality which will be used later in the paper.
\begin{lem}\label{l3.7}
Let $x$ and $y$ be two points in $\bbh^\dm$. Then we have
\[
\dist(x, y)\geq |\pi_k(x)-\pi_k(y)|, \qquad\forall~k=1, 2, \cdots, \dm.
\]
\end{lem}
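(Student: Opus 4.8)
The plan is to convert this geodesic inequality into an algebraic inequality in the ambient coordinates and then close it with Cauchy--Schwarz and AM--GM. Since both $\dist(x,y)$ and $|\pi_k(x)-\pi_k(y)|$ are nonnegative and $\cosh$ is strictly increasing on $[0,\infty)$, the statement is equivalent to $\cosh(\dist(x,y))\ge\cosh(\pi_k(x)-\pi_k(y))$. By \eqref{eqn:geod-d}, the left-hand side equals $\langle x,y\rangle=x_0y_0-\sum_{i=1}^{\dm}x_iy_i$, so the inequality becomes fully explicit.

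First I would evaluate the right-hand side. Using $\pi_k(x)=\tanh^{-1}(x_k/x_0)$ together with the fact that $x_0>|x_k|$ on $\bbh^{\dm}$ (a consequence of $x_0^2=1+\sum_i x_i^2$), one finds $\cosh\pi_k(x)=x_0/\sqrt{x_0^2-x_k^2}$ and $\sinh\pi_k(x)=x_k/\sqrt{x_0^2-x_k^2}$, and analogously for $y$. The subtraction formula for $\cosh$ then gives
\[
\cosh(\pi_k(x)-\pi_k(y))=\frac{x_0y_0-x_ky_k}{\sqrt{(x_0^2-x_k^2)(y_0^2-y_k^2)}}.
\]
(One may, if preferred, first use Corollary \ref{c3.5} to translate both points by $u_k(-\pi_k(y))$, which preserves distances and reduces to the case $\pi_k(y)=0$; this simplifies the bookkeeping but not the core estimate.)

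Introducing $P=x_0^2-x_k^2$, $Q=y_0^2-y_k^2$, and $S=x_0y_0-x_ky_k$, the target inequality reads $S-\sum_{i\ne k}x_iy_i\ge S/\sqrt{PQ}$, i.e.\ $S\bigl(1-1/\sqrt{PQ}\bigr)\ge\sum_{i\ne k}x_iy_i$. The hyperboloid constraint is the key tool here: it yields $\sum_{i\ne k}x_i^2=P-1$ and $\sum_{i\ne k}y_i^2=Q-1$ (both nonnegative), so Cauchy--Schwarz bounds the right-hand side by $\sqrt{(P-1)(Q-1)}$. A one-line expansion gives $S^2-PQ=(x_0y_k-x_ky_0)^2\ge0$, hence $S\ge\sqrt{PQ}\ge1$ and $1-1/\sqrt{PQ}\ge0$; therefore $S\bigl(1-1/\sqrt{PQ}\bigr)\ge\sqrt{PQ}-1$. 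It then suffices to prove $\sqrt{PQ}-1\ge\sqrt{(P-1)(Q-1)}$, which upon squaring is exactly $P+Q\ge2\sqrt{PQ}$, the AM--GM inequality.

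The only genuine obstacle is organizing the off-axis cross terms $\sum_{i\ne k}x_iy_i$; the decisive step is to repackage everything through $P$, $Q$, $S$ so that the hyperboloid constraint turns the geodesic inequality into the scalar inequality $\sqrt{PQ}-1\ge\sqrt{(P-1)(Q-1)}$. Once this reformulation is in place, Cauchy--Schwarz controls the cross term, the identity $S^2-PQ=(x_0y_k-x_ky_0)^2$ supplies the needed lower bound on $S$, and AM--GM finishes the argument.
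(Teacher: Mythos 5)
Your argument is correct, and it takes a genuinely different route from the paper's. The paper first symmetrizes by applying the isometry $z\mapsto z-'u_k(a_k)$ (via Lemma \ref{l3.2} and Corollary \ref{c3.5}) so that the two points satisfy $\pi_k(\tilde x)=-R$, $\pi_k(\tilde y)=R$; this forces $\tilde x_k\le-\sinh R$ and $\tilde y_k\ge\sinh R$, after which a single Cauchy--Schwarz estimate $\tilde x_0\tilde y_0\ge 1+\sum_l|\tilde x_l\tilde y_l|$ and the sign of $\tilde x_k\tilde y_k$ give $(\tilde x-'\tilde y)_0\ge 1+2\sinh^2R=\cosh(2R)$ directly. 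You instead stay in the original coordinates, compute $\cosh(\pi_k(x)-\pi_k(y))=S/\sqrt{PQ}$ explicitly, and reduce the claim to the scalar inequality $\sqrt{PQ}-1\ge\sqrt{(P-1)(Q-1)}$ via Cauchy--Schwarz on the off-axis components, the identity $S^2-PQ=(x_0y_k-x_ky_0)^2$, and AM--GM. Your version is self-contained (it does not invoke the translation machinery of Section \ref{sec:2.2}) and exposes the exact algebraic identity behind the inequality, at the cost of somewhat more bookkeeping; the paper's version is shorter once the isometry toolbox is in place and fits the theme of reducing statements to a normalized configuration. One small point to make explicit: from $S^2\ge PQ$ you only get $|S|\ge\sqrt{PQ}$, so you should note that $S=x_0y_0-x_ky_k>0$ (which follows from $x_0>|x_k|$ and $y_0>|y_k|$ on the hyperboloid) before concluding $S\ge\sqrt{PQ}\ge 1$.
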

\begin{proof}
The proof is provided in Appendix \ref{sec:app.a}.
\end{proof}


\section{Interaction potentials with Newtonian repulsion}
\label{sect:Newt-repulsion}
In this section, we consider interaction potentials that include Newtonian repulsion, i.e., potentials for which the repulsion component is given by the Green's function of the negative Laplacian on $\bbh^\dm$ \cite{FeZh2019}.

Define
\begin{align}\label{eqn:Phi}
\Phi(\theta):=\begin{cases}
\displaystyle-\frac{1}{2\pi}\log\left(\tanh\frac{\theta}{2}\right)& \text{ for } \dm=2,\\[10pt]
\displaystyle \frac{1}{\dm\alpha(\dm)}\int^\infty_{\theta}\frac{1}{\sinh^{\dm-1}\zeta}\d\zeta \qquad& \text{ for } \dm \geq3.
\end{cases}
\end{align}
Note that the case $\dm=2$ can in fact be recovered from the expression for general $\dm$, by calculating explicitly the integral. It was shown in \cite{CohlKalnins2012} that the Green's function for $-\Delta_{\bbh^\dm}$ is given by
\begin{equation}
\label{eqn:G}
G(x, y):=\Phi(\dist(x, y)),
\end{equation}
i.e., $G$ satisfies
\begin{equation}
\label{eqn:DeltaG}
\Delta_x G(x, y)=-\delta_y.
\end{equation}

In this section we consider interaction potentials in the form
\begin{equation}
\label{eqn:K-Na}
K(x,y) = G(x,y) + A(\dist(x,y)),
\end{equation}
where $G$ is given by \eqref{eqn:G} and $A:[0,\infty) \to \bbr $. As discussed in \cite{FeZh2019}, the Green's function component of $K$ (referred to in the sequel as Newtonian potential) models repulsive interactions. To balance the repulsion, we take the $A$ component to model attractive interactions. We refer to Figure \ref{Fig:K} for an illustration of several interaction potentials in the form \eqref{eqn:K-Na}; the attraction components are specified later -- see \eqref{eqn:Ac}, \eqref{eqn:Ah} and \eqref{lcpot}.
\begin{figure}[thb]
 \begin{center}
 \includegraphics[width=0.55\textwidth]{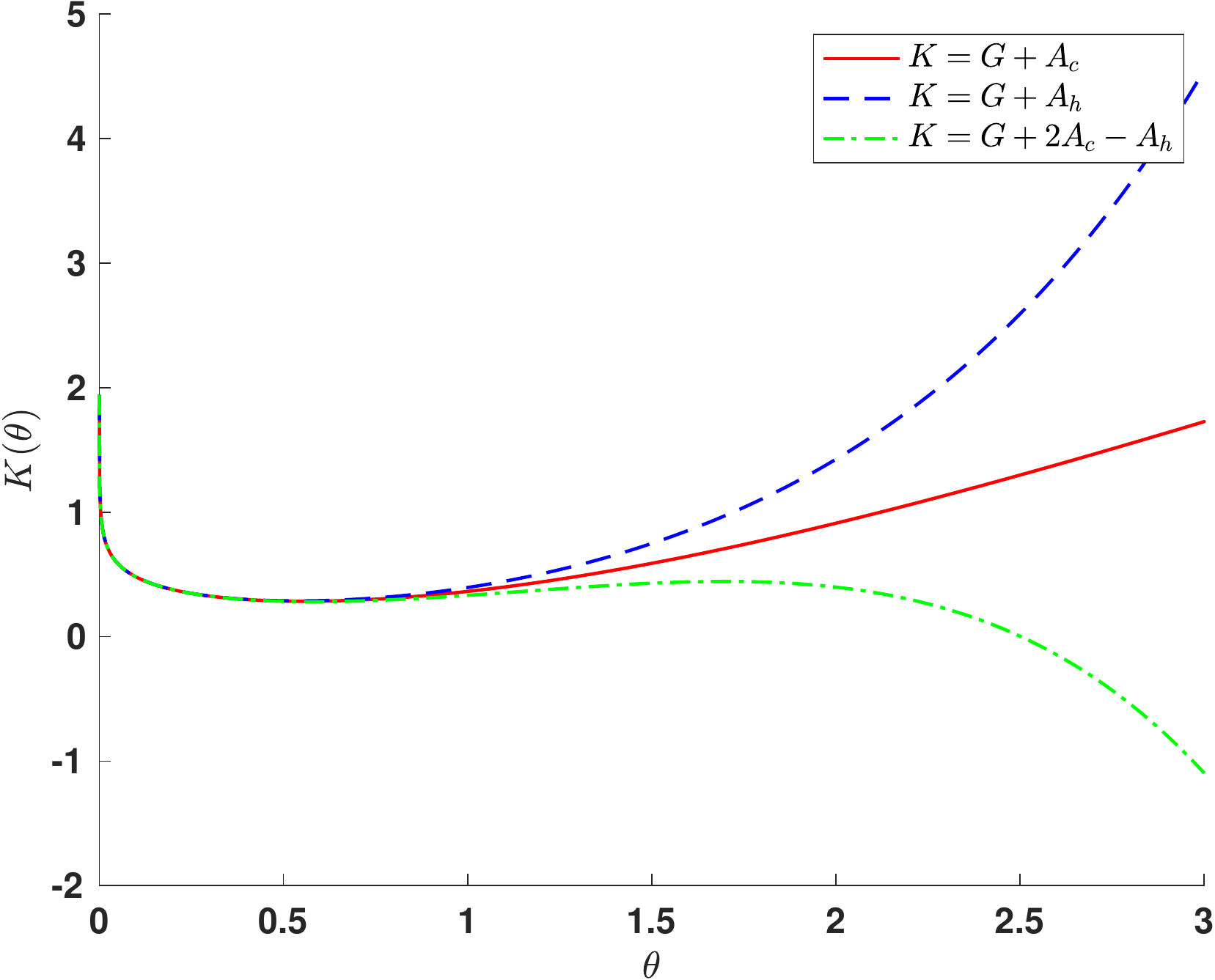} 
 \end{center}
\caption{Interaction potentials in the form \eqref{eqn:K-Na}, with attraction given by \eqref{eqn:Ac}, \eqref{eqn:Ah} and \eqref{lcpot}, respectively.}
\label{Fig:K}
\end{figure}

Our main goals here are to investigate the qualitative properties of critical points of the interaction energy \eqref{eqn:energy} with $K$ in the form \eqref{eqn:K-Na}, as well as to study explicit forms of minimizers for certain specific examples of attraction potentials $A$. All the equilibria investigated in this section are absolutely continuous probability measures $\rho(x) \d x$, with $\rho \in L^1(\d x)$.

\begin{rmk}
\label{rmk:H-1}
Similar to the Euclidean case, the energy \eqref{eqn:energy} corresponding to the Newtonian potential is the square of the $H^{-1}$ norm. Indeed, for $\rho \in H^{-1}(\bbh^\dm)$ with compact support, by definition of the $H^{-1}$ norm and the divergence theorem, we have:
\begin{align*}
\norm{\rho}_{H^{-1}(\bbh^\dm)}^2&=\int_{\bbh^\dm} \|\nabla u(x)\|_{\bbh^\dm}^2 \d x =-\int_{\bbh^\dm} (\Delta_{\bbh^\dm}u)(x) u(x)\d x,
\end{align*}
where $u$ satisfies (in the weak sense): $\displaystyle{-\Delta_{\bbh^\dm} u=\rho}$. Further, using the fundamental solution, write $\displaystyle{u(x) = \int_{\bbh^\dm} G(x,y) \rho(y) \d y}$, to reach
\begin{equation}
\label{eqn:H-1norm}
\norm{\rho}_{H^{-1}(\bbh^\dm)}^2 =\iint_{\bbh^\dm\times\bbh^\dm} G(x,y) \rho(x)\rho(y)\d x \d y.
\end{equation}
\end{rmk}

Regarding the Euler-Lagrange equations, for an interaction potential in the form \eqref{eqn:K-Na}, $\Lambda$ defined in \eqref{eqn:Lambda} reads:
\begin{equation}
\label{eqn:Lambda-GA}
\Lambda(x)=\int_{\bbh^\dm}\left(G(x,y)+A(\dist(x, y)\right)\rho(y) \d y.
\end{equation}
Then, by \eqref{eqn:DeltaG}, one finds:
\begin{equation}
\label{eqn:DeltaLambda}
\Delta\Lambda(x)=
\begin{cases}
\begin{aligned}
& -\rho(x)+\int_{\bbh^\dm}\Delta_x A(\dist(x, y))\rho(y) \d y \qquad  \text{ for } x \in \supp (\rho), \\
& \int_{\bbh^\dm}\Delta_x A(\dist(x, y))\rho(y) \d y\hspace{2.35cm}\text{ for } x \notin \supp (\rho).
\end{aligned}
\end{cases}
\end{equation}
For critical points of the energy (or equivalently, equilibrium solutions of model \eqref{eqn:model}),  $\Lambda(x)$ is identically equal to a constant in the support of $\rho$ (see \eqref{eqn:min-cond-1}). Hence, $\Delta \Lambda = 0$ in $\supp (\rho)$, and the first equation in \eqref{eqn:DeltaLambda} leads to the following integral equation for equilibrium densities $\rho$:
\begin{equation}
\label{eqn:int-eq-gen0}
\rho(x)= \int_{\supp(\rho)}\Delta_x A(\dist(x, y))\rho(y) \d y ,\qquad  \text{ for } x \in \supp (\rho).
\end{equation}


\subsection{Equilibria supported on geodesic balls}
\label{subsect:radial-min}

We consider equilibria of the aggregation model \eqref{eqn:model} that are supported on geodesic balls centred at the vertex $v$, and study their qualitative properties. To prove the main result  (Theorem \ref{thm:mov-planes}) on the radial symmetry and the monotonicity of such states, we employ the method of moving planes by adapting it to the hyperbolic space. 

Before we present the main result, we introduce the reflection on the hyperbolic space $\bbh^\dm$. In the Euclidean space $\bbr^\dm$, the reflection by a hyperplane $x_1=\ta$ is given by the map:
\begin{align}\label{R-1}
(x_1, x_2, \cdots, x_\dm)\mapsto (2\ta-x_1, x_2, \cdots, x_\dm).
\end{align}
The reflection map \eqref{R-1} can be divided into three maps as follows:
\begin{align}\label{R-2}
(x_1, x_2, \cdots, x_\dm)\mapsto(x_1-\ta, x_2, \cdots, x_\dm)
\mapsto(\ta-x_1, x_2, \cdots, x_\dm)
\mapsto(2\ta-x_1, x_2, \cdots, x_\dm).
\end{align}
The first map is the translation in the $x_1$ coordinate by $-\ta$, the second map is the reflection by the hyperplane $x_1=0$, and the third map is the translation in the $x_1$ coordinate by $\ta$. 

We apply now a similar argument to define the reflection on $\bbh^\dm$. We present the reflection corresponding to coordinate $x_1$, the reflections corresponding to coordinates $x_2,\dots,x_\dm$ are similar. First, we define the reference hypersurface for the reflection. For the Euclidean space, this is a regular flat hyperplane, however, for the hyperbolic space we choose the hypersurface to be $P_1(\ta)$ defined in \eqref{eqn:translation-k} (see also Figure \ref{Fig0}). For the translation on $\bbh^\dm$ we use the $+'$ operation (see Definition \ref{defn:+prime}), and take the translation by $\pm\ta$ in the $x_1$ coordinate, to be $x\mapsto x\pm' u_1(\ta)$ (see \eqref{eqn:translation-k}). Finally, analogous to the reflection by the hyperplane $x_1=0$ in the Euclidean space, on $\bbh^\dm$ we have reflection by the hypersurface $P_1(0)=\{x\in\bbh^\dm:x_1=0\}$, which we define it to be:
\[
R_1:(x_0, x_1, x_2, \cdots, x_\dm)\in\bbh^\dm\rightlong (x_0, -x_1, x_2, \cdots, x_\dm)\in\bbh^\dm.
\]

Following the map sequence in \eqref{R-2} (translation by $-\ta$, reflection, translation by $\ta$), we express the reflection across an arbitrary hypersurface $P_1(\ta)$ on $\bbh^\dm$ as:
\begin{equation}
\label{R-1-hyp-seq}
x \rightlong x-'u_1(\ta) \rightlong R_1(x-'u_1(\ta))\rightlong R_1(x-'u_1(\ta))+'u_1(\ta).
\end{equation}
This sequence of three maps is illustrated schematically in Figure \ref{Fig3}. Written compactly, the reflection map above reads:
\begin{equation}
\label{R-1-hyp}
x \rightlong x^\ta := R_1(x-'u_1(\ta))+'u_1(\ta).
\end{equation}

For a better understanding of the reflection map, let us express the reflected point $x^\ta$ in coordinates. From the definition of $-'$ and $u_1(\ta)$ (see \eqref{eqn:-prime} and \eqref{eqn:uk}), we have 
\[
\begin{cases}
(x-'u_1(\ta))_0=x_0\cosh\ta-x_1\sinh\ta,\\
(x-'u_1(\ta))_1=x_1\cosh\ta-x_0\sinh\ta,\\
(x-'u_1(\ta))_j=x_j,\qquad\forall~2\leq j\leq \dm.
\end{cases}
\]
Then, by the definition of the reflection $R_1$, we have
\[
\begin{cases}
(R_1(x-'u_1(\ta)))_0=x_0\cosh\ta-x_1\sinh\ta,\\
(R_1(x-'u_1(\ta)))_1=-x_1\cosh\ta+x_0\sinh\ta,\\
(R_1(x-'u_1(\ta)))_j=x_j,\quad\forall~2\leq j\leq \dm.
\end{cases}
\]
Finally, we use the definition of $+'$ and $u_1(\ta)$ to get
\[
\begin{cases}
(R_1(x-'u_1(\ta))+'u_1(\ta))_0&=(x_0\cosh\ta- x_1\sinh\ta)\cosh\ta+(-x_1\cosh\ta+x_0\sinh\ta)\sinh\ta\\
&=x_0\cosh 2\ta-x_1\sinh 2\ta,\\[2pt]
(R_1(x-'u_1(\ta))+'u_1(\ta))_1&=(-x_1\cosh\ta+x_0\sinh\ta)\cosh\ta-(x_0\cosh\ta-x_1\sinh\ta)\sinh\ta\\
&=-x_1\cosh2\ta+x_0\sinh2\ta,\\[2pt]
(R_1(x-'u_1(\ta))+'u_1(\ta))_j&=x_j,\quad\forall~2\leq j\leq \dm.
\end{cases}
\]
Therefore, we can express the reflection \eqref{R-1-hyp} in coordinates, by:
\[
x=(x_0, x_1, \cdots, x_\dm)\rightlong x^\ta=(x_0\cosh 2\ta-x_1\sinh 2\ta, -x_1\cosh 2\ta+x_0\sinh2\ta, x_2, \cdots, x_\dm).
\]

\begin{figure}[h]
\begin{center}
\begin{tikzpicture}
  \draw[->] (-2.2, 0) ->(14, 0) node[right] {$x_1$};
  \draw[->] (0, -1) ->(0, 4) node[right] {$x_2$};
  \draw[scale=1, domain=-1:4, smooth, variable=\x,very thick] plot({((e^(1.5)-e^(-1.5))/2)*(1+\x*\x)^(1/2)},{\x});
    \draw[scale=1, domain=-2.2:14, smooth, variable=\x] plot({\x},{3.62686});
    \draw (5, 1) node[left]{$P_1(\ta)$};
      \filldraw[black](13.645, 3.62686) circle (2pt) node[anchor=north east]{$A$};
            \filldraw[black](1.96046, 3.62686) circle (2pt) node[anchor=north ]{$B$};
\filldraw[black](-1.96046, 3.62686) circle (2pt) node[anchor=north east]{$C$};
\filldraw[black] (4.42134, 3.62686) circle (2pt) node[anchor=south ]{$D$};
\draw[-latex]  (13.645, 3.62686) to [bend right] (2, 3.7);
\node[draw] at (7.8,4.8) {$x\mapsto x-'u_1(\ta)$};

\draw[-latex]  (1.96046, 3.62686) to [bend right] (-1.9, 3.7);
\node[draw] at (0,4.8) {$(x_0, x_1, x_2)\mapsto (x_0, -x_1, x_2)$};

\draw[-latex] (-1.96046, 3.62686)  to [bend right] (4.3, 3.5);
\node[draw] at (1.5,2.2) {$x\mapsto x+'u_1(\ta)$};
\end{tikzpicture}
\end{center}
\caption{Schematic illustration (in the $x_1x_2$-plane) of the reflection on $\bbh^2$ across the hypersurface $P_1(\ta)$ -- see \eqref{R-1-hyp}. Point $A$ reflects into point $D$, as the result of three maps $A \mapsto B$ (translation by $-u_1(\ta)$), $B \mapsto C$ (reflection across $P_1(0)=x_0x_2-$plane), and $C \mapsto D$ (translation by $u_1(\ta)$.
}\label{Fig3}
\end{figure}
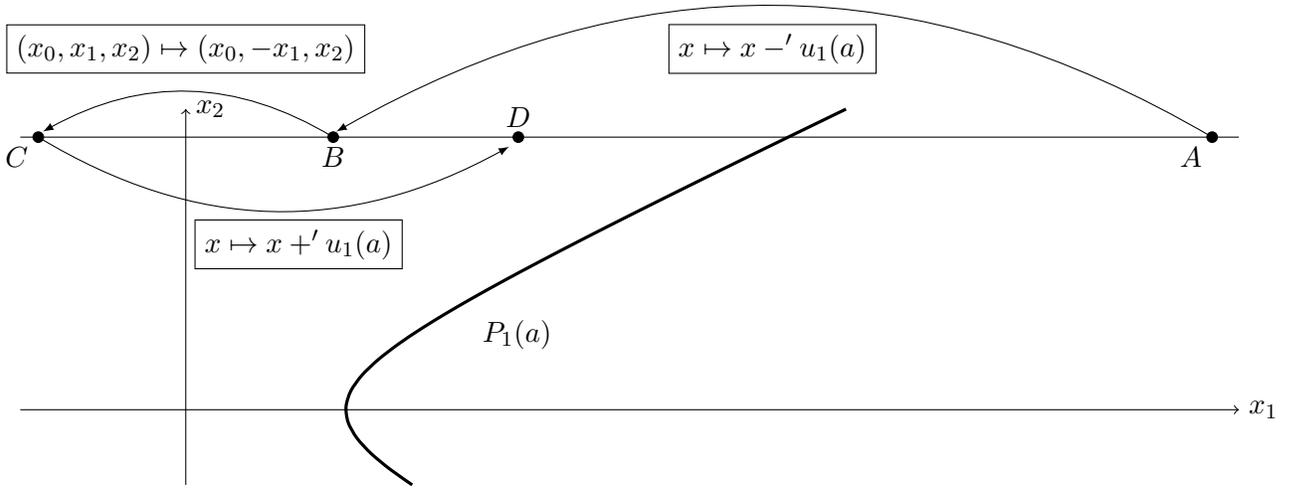

\begin{rmk} The following properties can be checked easily:
\begin{enumerate}
\item $x=(x^\ta)^\ta$, for all $x \in \bbh^\dm$.
\item The set of fixed points for the reflection \eqref{R-1-hyp} is $P_1(\ta)$. 
\item The reflection map \eqref{R-1-hyp} is an isometry on $\bbh^\dm$.
\end{enumerate}
\end{rmk}

We present now the main result of this subsection.

\begin{thm}
\label{thm:mov-planes}
Consider the aggregation model \eqref{eqn:model} with an interaction potential in the form \eqref{eqn:K-Na}. Let $\rho$ be a bounded equilibrium solution of model \eqref{eqn:model} that is supported in the geodesic ball $B_R(v)$, for some $R>0$. Let $F(\dist(x, y)):=\Delta_x A(\dist(x, y))$ and assume $F(\dist(x, \cdot))$ is locally integrable for all $x$. Then, provided $F$ is a monotone function, $\rho$ is radially symmetric and monotone about the vertex $v$. Specifically, (i) if $F$ is decreasing, then $\rho$ is radially symmetric and monotonically decreasing about the vertex $v$, and (ii) if $F$ is increasing, then $\rho$ is radially symmetric and monotonically increasing about $v$.
\end{thm}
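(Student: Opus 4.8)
The plan is to run the method of moving planes in the $x_1$-direction and then promote the result to full radial symmetry using the rotations of $\bbh^\dm$ that fix $v$. Any $O\in O(\dm)$ acting on the spatial coordinates $(x_1,\dots,x_\dm)$ and fixing $x_0$ preserves the Lorentzian form \eqref{eqn:iproduct-L} and fixes $v$, hence is an isometry preserving $B_R(v)$; so it suffices to prove symmetry about the single hypersurface $P_1(0)$ and monotonicity in the $\pi_1$ coordinate, and then rotate. Throughout I take $\supp(\rho)=\overline{B_R(v)}$ (so that \eqref{eqn:int-eq-gen0} is available throughout the ball) and $F$ decreasing (the increasing case is identical after reversing all signs), using \eqref{eqn:int-eq-gen0} in the form $\rho(x)=\int F(\dist(x,y))\rho(y)\,\d y$. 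For $\tau>0$ set $T_\tau=\{x\in\bbh^\dm:\pi_1(x)>\tau\}$, let $\Sigma_\tau=\overline{B_R(v)}\cap T_\tau$ be the cap cut off by $P_1(\tau)$, and for $x\in\Sigma_\tau$ put $w_\tau(x)=\rho(x)-\rho(x^\tau)$, where $x^\tau$ is the reflection \eqref{R-1-hyp}.

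First I would record two geometric facts, both reduced to $\tau=0$ by the isometry $x\mapsto x-'u_1(\tau)$, which maps $P_1(\tau)$ to $P_1(0)$ and, by Corollary \ref{c3.5}, sends $T_\tau$ onto $T_0$. Using $\dist(x,v)=\cosh^{-1}(x_0)$ from \eqref{eqn:geod-d} and the coordinate form of \eqref{R-1-hyp}, a one-line computation gives $x^\tau_0-x_0=2\sinh\tau\,(x_0\sinh\tau-x_1\cosh\tau)$, which is strictly negative exactly when $\pi_1(x)>\tau>0$; hence $\dist(x^\tau,v)<\dist(x,v)<R$, so \emph{the reflected cap lands inside the support}, $x^\tau\in\overline{B_R(v)}$ for $x\in\Sigma_\tau$. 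Second, for $x,y\in T_0$ one computes $\langle x,R_1y\rangle-\langle x,y\rangle=2x_1y_1>0$, so by \eqref{eqn:geod-d} and isometry invariance $\dist(x,y)<\dist(x,y^\tau)$ whenever $x,y\in T_\tau$. Subtracting the identities \eqref{eqn:int-eq-gen0} at $x$ and at $x^\tau$ (licit by the first fact), splitting the integral over $T_\tau$ and $T_\tau^c$, substituting $y\mapsto y^\tau$ on the latter, and using $\dist(x^\tau,y)=\dist(x,y^\tau)$ yields, with the kernel $\calK_\tau(x,y):=F(\dist(x,y))-F(\dist(x,y^\tau))$,
\[
w_\tau(x)=\int_{T_\tau}\calK_\tau(x,y)\,\bigl[\rho(y)-\rho(y^\tau)\bigr]\,\d y,\qquad x\in\Sigma_\tau.
\]
By the second fact $\calK_\tau\ge0$. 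On the part of $T_\tau$ with $y\in\Sigma_\tau$ the bracket equals $w_\tau(y)$, and on the rest (where only $y^\tau$ lies in the support) it equals $-\rho(y^\tau)\le0$; discarding this favorable term together with the part of $\Sigma_\tau$ on which $w_\tau\le0$ gives, on the bad set $\Sigma_\tau^+=\{x\in\Sigma_\tau:w_\tau(x)>0\}$, the inequality $0<w_\tau(x)\le\int_{\Sigma_\tau^+}\calK_\tau(x,y)w_\tau(y)\,\d y$.

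The analytic core is then the Chen--Li--Ou machinery for integral equations. I would start the plane just below $\tau=R$: since $\pi_1\le\dist(\cdot,v)<R$ on the support by Lemma \ref{l3.7}, the cap $\Sigma_\tau$, and hence $\Sigma_\tau^+$, has small measure as $\tau\uparrow R$; integrating the inequality over $\Sigma_\tau^+$ and applying Fubini gives $\int_{\Sigma_\tau^+}w_\tau\le\bigl(\sup_y\int_{\Sigma_\tau^+}\calK_\tau(x,y)\,\d x\bigr)\int_{\Sigma_\tau^+}w_\tau$, and the supremum is bounded by $\int_{\Sigma_\tau^+}|F(\dist(x,y))|\,\d x$ plus its reflected analogue, which tend to $0$ as $|\Sigma_\tau^+|\to0$ by absolute continuity of the integral (this is where the local integrability of $F(\dist(x,\cdot))$ and the boundedness of $\rho$ enter). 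Thus the factor is $<1$ and $\Sigma_\tau^+$ is null, so $w_\tau\le0$ for $\tau$ near $R$. One then sets $\tau_*=\inf\{\tau\ge0:w_s\le0\text{ on }\Sigma_s\ \text{for all }s>\tau\}$ and argues by contradiction that $\tau_*=0$. \textbf{The continuation step is the main obstacle}: one needs that for $\tau$ slightly below $\tau_*$ the set $\Sigma_\tau^+$ again has small measure, so that the same absolute-continuity estimate forces it to be null; making this precise requires combining continuity of $\tau\mapsto w_\tau$ with a nondegeneracy (strong-positivity) argument excluding $w_{\tau_*}\equiv0$ when $\tau_*>0$, exactly as in the Euclidean integral-equation case but now with all distances and reflections carried out on the hyperboloid.

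Once $w_\tau\le0$ holds for every $\tau>0$, the conclusions follow. Running the same argument from the opposite side gives the reverse inequality at $\tau=0$, hence $w_0\equiv0$, i.e. $\rho(x)=\rho(R_1x)$: symmetry about $P_1(0)$. Conjugating by the rotations $O\in O(\dm)$ fixing $v$ yields symmetry about every totally geodesic hypersurface through $v$, which forces $\rho$ to be radial about $v$, $\rho(x)=g(\dist(x,v))$. Monotonicity of $g$ is read off along the axis $s\mapsto u_1(s)$, on which $\pi_1(u_1(s))=s$ and $\dist(u_1(s),v)=s$: for $0<s'<s$ the choice $\tau=\tfrac{s+s'}{2}\in(0,s)$ gives $u_1(s)^\tau=u_1(s')$, so $g(s)=\rho(u_1(s))\le\rho(u_1(s'))=g(s')$ and $g$ is decreasing. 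In the increasing-$F$ case the kernel $\calK_\tau$ is nonpositive, every inequality above reverses, and the identical argument gives $\rho$ radial and monotonically increasing about $v$.
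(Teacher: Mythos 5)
Your proposal follows essentially the same route as the paper's proof: the same reflection $x\mapsto x^\ta$ across $P_1(\ta)$, the same two geometric facts (the reflected cap lands back inside $B_R(v)$, and $\dist(x,y)\leq\dist(x,y^\ta)$ for $x,y$ on the same side of $P_1(\ta)$, which the paper records as \eqref{eqn:ineq-dist}), the same difference identity (your displayed formula for $w_\tau$ is the paper's \eqref{eqn:diff-rho} up to the sign convention for $\ta$), and the same small-measure contraction to start the plane near the boundary. Your computations $x^\ta_0-x_0=2\sinh\ta\,(x_0\sinh\ta-x_1\cosh\ta)$ and $\langle x,R_1y\rangle-\langle x,y\rangle=2x_1y_1$ are correct, and the passage from one-directional symmetry to radial symmetry and to monotonicity of $g$ along $s\mapsto u_1(s)$ is sound. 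The only substantive cosmetic difference is that you close the contraction in $L^1(\Sigma_\tau^+)$ via Fubini, whereas the paper works in $L^\infty(\Sigma_\ta^-)$ (see \eqref{eqn:norm-diff}); both rest on the same absolute-continuity property of the locally integrable kernel and the boundedness of $\rho$.

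The genuine gap is the one you flag yourself: the continuation step at $\tau_*$. As written, your argument establishes $w_\tau\leq 0$ only for $\tau$ near $R$ and then asserts that $\tau_*=0$ "exactly as in the Euclidean integral-equation case" without carrying out the nondegeneracy argument. This is precisely where the paper's proof does its remaining work: it observes that at a putative maximal $\ta_0$ the identity \eqref{eqn:diff-rho}, combined with the sign of the kernel, forces strict inequality $\rho>\rho_{\ta_0}$ in the interior of $\Sigma_{\ta_0}$, so that $\overline{\Sigma_{\ta_0}^-}$ has measure zero; since $\Sigma_{\ta}^-$ for $\ta$ close to $\ta_0$ is contained (in the limit) in $\overline{\Sigma_{\ta_0}^-}$, its measure tends to zero, the contraction constant can again be made $\leq\tfrac12$, and $\Sigma_\ta^-$ is empty, contradicting maximality of $\ta_0$. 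Without this step the plane cannot be moved to the origin and neither the symmetry $w_0\equiv0$ nor the monotonicity statement follows, so your proof is incomplete at exactly the point you identify; everything else matches the paper.
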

\begin{proof}
Since $\rho$ is an equilibrium solution, we use \eqref{eqn:int-eq-gen0} to write:
\begin{equation}
\label{eqn:rho-ss}
\rho(x)=\begin{cases}
\displaystyle\int_{B_R(v)} F(\dist(x, y))\rho(y)\d y \qquad& \text{ for } x\in B_R(v),\\[10pt]
0&\text{otherwise}.
\end{cases}
\end{equation}
We define the reflection $\rho_\ta$ of $\rho$ across the plane $P_1(\ta)$ by:
\[
\rho_\ta(x)=\rho(x^\ta).
\]
From \eqref{eqn:rho-ss} we have
\[
\rho_\ta(x)=\begin{cases}
\displaystyle\int_{B_R(v)}F(\dist(x^\ta, y))\rho(y) \d y \qquad& x\in B_{R}(v^\ta),\\[10pt]
0&\text{otherwise}.
\end{cases}
\]
Also, we set
\[
\Sigma_\ta=\{x\in B_R(v): \pi_1(x)\geq\ta\}.
\]

We apply the method of moving planes and compare $\rho(x)$ and $\rho_\ta(x)$ on $\Sigma_\ta$. If $x\not\in B_{R}(v^\ta)$, then we know that $\rho_\ta(x)=0$. Since $\rho(x)\geq0$, we can conclude that $\rho(x)-\rho_\ta(x)\geq0$ for all $x\in \Sigma_\ta\setminus  B_{R}(v^\ta)$. 

Consider the case $x\in \Sigma_\ta\cap B_{R}(v^\ta)$. 
By \eqref{eqn:rho-ss} we write $\rho(x)$ as
\begin{equation}
\label{eqn:rho-decomp}
\rho(x)=\int_{B_R(v) \cap \Sigma_\ta}F(\dist(x, y))\rho(y)\d y+\int_{B_R(v) \setminus \Sigma_\ta}F(\dist(x, y))\rho(y)\d y.
\end{equation}
We use that the reflection map \eqref{R-1-hyp} is an isometry to rewrite the second integral above in reflected form (reflect the domain of integration, as well as $x$ and $y$). The image of $B_R(v) \setminus \Sigma_\ta$ under the reflection map \eqref{R-1-hyp} is $B_{R}(v^\ta) \cap \Sigma_\ta$. Also use $\rho(y) = \rho_\ta(y^\ta)$ to get from \eqref{eqn:rho-decomp}:
\begin{align*}
\rho(x)&=\int_{B_R(v) \cap \Sigma_\ta}F(\dist(x, y))\rho(y)\d y+\int_{B_{R}(v^\ta) \cap \Sigma_\ta}F(\dist(x^\ta, y^\ta))\rho_\ta(y^\ta)\d y^\ta\\
&=\int_{B_R(v) \cap \Sigma_\ta}F(\dist(x, y))\rho(y)\d y+\int_{B_{R}(v^\ta) \cap \Sigma_\ta}F(\dist(x^\ta, y))\rho_\ta(y)\d y,
\end{align*}
where for the second equal sign we simply renamed the variable $y^\ta$ to $y$. 

Since $\rho$ and $\rho_\ta$ are zero outside $B_R(v)$ and $B_R(v^\ta)$, respectively, we can also write the expression above as:
\begin{equation}
\label{eqn:rho-decomp-f}
\rho(x) = \int_{\Sigma_\ta}F(\dist(x, y))\rho(y)\d y+\int_{\Sigma_\ta}F(\dist(x^\ta, y))\rho_\ta(y)\d y.
\end{equation}
By replacing $x$ with $x^\ta$ in \eqref{eqn:rho-decomp-f}, we can obtain the following expression for $\rho_\ta$:
\begin{equation}
\label{eqn:rhomu-decomp}
\rho_\ta(x)=\int_{\Sigma_\ta}F(\dist(x^\ta, y))\rho(y)dy+\int_{\Sigma_\ta }F(\dist(x, y))\rho_\ta(y)\d y.
\end{equation}
From \eqref{eqn:rho-decomp-f} and \eqref{eqn:rhomu-decomp} we calculate the difference $\rho(x)-\rho_\ta(x)$ for $x\in \Sigma_\ta\cap B_{R}(v^\ta)$ as:
\begin{equation}
\label{eqn:diff-rho}
\rho(x)-\rho_\ta(x)=\int_{\Sigma_\ta} (F(\dist(x, y))-F(\dist(x^\ta, y)))(\rho(y)-\rho_\ta(y))\d y.
\end{equation}

We can show that
\begin{equation}
\label{eqn:ineq-dist}
\dist(x, y)\leq \dist(x^\ta, y), \qquad \text{ for all } x, y\in \Sigma_\ta.
\end{equation}
Indeed, without loss of generality, it is enough to proof the case $\ta=0$. Let $x=(x_0, x_1, \cdots, x_\dm)$ and $y=(y_0, y_1, \cdots, y_\dm)$. Since $x, y\in \Sigma_0$, we have $x_1, y_1\geq0$. This yields that
\[
\cosh\left(\dist(x, y)\right)=x_0y_0-x_1y_1-\cdots-x_\dm y_\dm\leq  x_0y_0+x_1y_1-\cdots-x_\dm y_\dm=\cosh\left(\dist(x^0, y)\right),
\]
which implies the desired result. 

Consider now the following two cases.
\medskip

\noindent {\em Case (i): $F$ is decreasing.} Since $F$ is decreasing and \eqref{eqn:ineq-dist} holds, we have 
\begin{equation}
\label{eqn:diff-F}
F(\dist(x, y))-F(\dist(x^\ta, y))\geq 0, \qquad \text{ for all }x\in\Sigma_\ta\cap B_{R}(v^\ta)\text{ and }y\in\Sigma_\ta.
\end{equation}
We will show that 
\[
\rho_{\ta} \leq \rho, \qquad \text{ on } \Sigma_{\ta}, 
\]
for all $ \ta \in (-R,0)$, which implies that $\rho$ is monotonically decreasing about the vertex, in the $x_1$ direction.

Assume first that there exists $\ta_0\in(-R, 0)$ such that 
\begin{equation}
\label{eqn:ineq-mu0}
\rho_{\ta_0}(x) \leq \rho(x), \qquad \text{ for all } x\in\Sigma_{\ta_0},
\end{equation}
and show that $\ta_0$ can be extended to the origin. Indeed, suppose by contradiction that \eqref{eqn:ineq-mu0} holds, but $\ta_0$ cannot be extended, and take $\ta \in (\ta_0,\ta_0+\epsilon)$, with $\epsilon>0$. Define
\[
\Sigma_\ta^- = \{ x \in \Sigma_\ta: \rho(x) < \rho_\ta(x)\}.
\]
Then, from \eqref{eqn:diff-rho} and \eqref{eqn:diff-F}, for any $x \in \Sigma_\ta^-$, we have:
\[
0 < \rho_\ta(x) - \rho(x) \leq \int_{\Sigma_\ta^-}  (F(\dist(x, y))-F(\dist(x^\ta, y)))(\rho_\ta(y)-\rho(y)) \d y.
\]
Therefore, 
\begin{equation}
\label{eqn:norm-diff}
\| \rho_\ta- \rho\|_{L^\infty(\Sigma_\ta^-)} \leq \| \rho_\ta- \rho\|_{L^\infty(\Sigma_\ta^-)} \sup_{x \in \Sigma_\ta^-} \int_{\Sigma_\ta^-}  (F(\dist(x, y))-F(\dist(x^\ta, y))) \d y.
\end{equation}

The function under the integral on the right-hand-side is locally integrable. Also, by \eqref{eqn:diff-rho}, 
it holds that $\rho(x) > \rho_{\ta_0}(x)$ in the interior of $\Sigma_{\ta_0}$, implying that the
 closure of $\Sigma_{\ta_0}^-$ has measure $0$. As $\lim_{\ta \to \ta_0} 
\Sigma_{\ta}^- \subset \overline{\Sigma_{\ta_0}^-}$, we infer that the measure of $\Sigma_{\ta}^-$ approaches 
$0$ as $\ta \to  \ta_0$. Therefore, we can choose $\epsilon$ small enough such that
\[
\sup_{x \in \Sigma_\ta^-} \int_{\Sigma_\ta^-}  (F(\dist(x, y))-F(\dist(x^\ta, y))) \d y  \leq \frac{1}{2}.
\]
From \eqref{eqn:norm-diff} we then infer that $ \| \rho_\ta - \rho \|_{{L^\infty}(\Sigma_{\ta}^-)} $ =\, 0, 
which implies that $\Sigma_{\ta}^-$ is empty. Consequently, $\ta_0$ is not maximal and we reached a contradiction.

To show that there exists $\ta_0 \in (-R,0)$ with the property~\eqref{eqn:ineq-mu0}, note 
that~\eqref{eqn:ineq-mu0} holds for $\ta_0 = -R$. Then, by an argument entirely similar to the one made in the previous step, one can infer that the plane  $\ta_0 = -R$ can be moved further to the right, while~\eqref{eqn:ineq-mu0} still holds. 

A similar argument can be made using hypersufaces with $\ta>0$. Finally, since the direction $x_1$ can be chosen arbitrarily, we conclude that $\rho$ is radially symmetric and decreasing about the vertex $v$. This concludes the proof for case (i).
\medskip

\noindent {\em Case (ii): $F$ is increasing.}  From a similar argument as for case (i), one can show that $\rho$ is radially symmetric and increasing about the vertex $v$.
\end{proof}

We now turn the attention on radially symmetric equilibria that are local minimizers of the energy (see Lemma \ref{lem:loc-min}).
\begin{prop}\label{P3.5} 
\label{prop:local-min}
Let $K$ be an interaction potential in the form \eqref{eqn:K-Na}, and let $\rho$ be a bounded, radially symmetric equilibrium solution of model \eqref{eqn:model} that is supported in the geodesic ball $B_R(v)$, for some $R>0$.
Assume that $A:[0,\infty) \to \bbr$ satisfies $\Delta_x A(\dist(x, y))\geq0$ for all $x, y\in\bbh^\dm$. Then $\rho$ satisfies the necessary condition \eqref{eqn:min-cond} for being a local energy minimizer.
\end{prop}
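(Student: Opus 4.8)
The plan is to verify the two parts of the necessary condition \eqref{eqn:min-cond} separately, the first being immediate and the second carrying the bulk of the work. Since $\rho$ is an equilibrium of \eqref{eqn:model}, $\Lambda$ is constant, say equal to $\lambda$, on $\supp(\rho)$; this is exactly \eqref{eqn:min-cond-1}. It remains to prove \eqref{eqn:min-cond-2}, i.e.\ that $\Lambda(x)\ge\lambda$ for a.e.\ $x\in\bbh^\dm\setminus\supp(\rho)$. The first reduction I would make is to one dimension: because $\rho$ is radially symmetric about $v$ and $K$ depends only on $\dist(\cdot,\cdot)$, the potential $\Lambda$ in \eqref{eqn:Lambda-GA} is itself radial about $v$, so I write $\Lambda(x)=\Lambda(\theta)$ with $\theta=\dist(x,v)$. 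Taking $\supp(\rho)=\overline{B_R(v)}$ without loss of generality, the goal becomes $\Lambda(\theta)\ge\lambda$ for all $\theta>R$, knowing $\Lambda(\theta)=\lambda$ for $\theta\le R$.

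Next I would exploit subharmonicity outside the support. By \eqref{eqn:DeltaLambda}, the standing assumption $\Delta_x A(\dist(x,y))\ge0$, and $\rho\ge0$, we get $\Delta\Lambda(x)\ge0$ for $x\notin\supp(\rho)$, so $\Lambda$ is subharmonic on $\bbh^\dm\setminus\overline{B_R(v)}$. Using the radial form of the Laplacian from \eqref{a-a-14} (the angular term drops out), this reads $\Lambda''(\theta)+(\dm-1)\coth\theta\,\Lambda'(\theta)\ge0$ for $\theta>R$, which after multiplication by the integrating factor $\sinh^{\dm-1}\theta$ is equivalent to
\[
\frac{\d}{\d\theta}\left(\sinh^{\dm-1}\theta\,\Lambda'(\theta)\right)\ge0,\qquad \theta>R.
\]
Hence $\theta\mapsto\sinh^{\dm-1}\theta\,\Lambda'(\theta)$ is non-decreasing on $(R,\infty)$.

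To fix the sign I would use the boundary value $\Lambda'(R)=0$. Decomposing $\Lambda=G\ast\rho+A\ast\rho$, the volume (Newtonian) potential $G\ast\rho$ of the bounded, compactly supported density $\rho$ is $C^{1,\alpha}$ by elliptic regularity, and $A\ast\rho$ is likewise $C^1$, so $\Lambda\in C^1$ across $\partial B_R(v)$; since $\Lambda\equiv\lambda$ on $B_R(v)$ gives $\Lambda'(R^-)=0$, continuity of $\Lambda'$ yields $\Lambda'(R^+)=0$. Combined with the monotonicity above, $\sinh^{\dm-1}\theta\,\Lambda'(\theta)\ge\sinh^{\dm-1}R\,\Lambda'(R)=0$ for $\theta>R$, so $\Lambda'\ge0$ there. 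Therefore $\Lambda$ is non-decreasing on $(R,\infty)$ and $\Lambda(\theta)\ge\Lambda(R)=\lambda$, establishing \eqref{eqn:min-cond-2}.

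I expect the main obstacle to be the $C^1$-regularity claim at the boundary of the support, i.e.\ justifying $\Lambda'(R^+)=0$: one must invoke the precise regularity of the Newtonian potential of a bounded density and note that no jump in the normal derivative occurs across $\partial B_R(v)$ precisely because $\rho$ carries no singular mass there. The subharmonicity and integrating-factor steps are routine, but it is worth emphasizing that they deliver a \emph{lower} bound only because of the radial reduction: for a general non-radial subharmonic $\Lambda$ the maximum principle controls the supremum by boundary data, not the infimum, so the assumed radial symmetry of $\rho$ is essential to the argument.
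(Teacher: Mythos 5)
Your proof is correct and follows the same overall strategy as the paper: reduce to the radial profile $\tilde\Lambda(r)$, use $\Delta_x A\ge 0$ to get $\Delta\Lambda\ge 0$, and deduce that $\tilde\Lambda$ is non-decreasing, hence $\ge\lambda$ outside $B_R(v)$. The one place you diverge is in how the sign of $\tilde\Lambda'$ is anchored. The paper applies the divergence theorem over the \emph{full} ball $B_r(v)$ (see \eqref{eqn:div-th}), so that $|\partial B_r(v)|\,\tilde\Lambda'(r)=\int_{B_r(v)}\Delta\Lambda\,\d x\ge0$ directly; the contribution from inside the support vanishes and the implicit anchor sits at $\theta=0$, where the surface factor $\sinh^{\dm-1}\theta$ vanishes. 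You instead integrate the radial inequality $\bigl(\sinh^{\dm-1}\theta\,\Lambda'(\theta)\bigr)'\ge0$ only on the exterior $(R,\infty)$ and therefore need the matching condition $\Lambda'(R^+)=0$, which you justify by $C^{1}$ regularity of the Newtonian potential of a bounded density across $\partial B_R(v)$. Both routes require comparable regularity of $\Lambda$ near the free boundary (the paper's divergence-theorem identity also presupposes that $\nabla\Lambda$ has no singular part on $\partial B_R(v)$), so your version simply makes explicit a point the paper leaves implicit; the paper's version is marginally cleaner because it never has to evaluate $\Lambda'$ at $r=R$. Your closing remark that radial symmetry is what turns subharmonicity into a \emph{lower} bound is accurate and consistent with how the paper uses the hypothesis.
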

\begin{proof}
Since $\rho$ is an equilibrium solution, it satisfies the first equation of \eqref{eqn:min-cond}. It remains to show \eqref{eqn:min-cond-2}. By using \eqref{eqn:DeltaLambda}, we write:
\begin{align}\label{lambdaeq}
\Delta \Lambda(x)=
\begin{cases}
0\qquad&\text{if }x\in B_R(v),\\[3pt]
\displaystyle\int_{\bbh^\dm}\Delta_xA(\dist(x, y))\rho(y)\d y,\qquad&\text{if }x\not \in  B_R(v).
\end{cases}
\end{align}

As $K$ and $\rho$ are radially symmetric, $\Lambda$ has radial symmetry as well. Hence, we can define $\tilde{\Lambda}:[0,\infty) \to \R$ by:
\begin{equation}
\label{eqn:Lambda-t}
\tilde{\Lambda}(r):=\Lambda(x),
\end{equation}
for some $x\in\partial B_r(v)$. In particular, by \eqref{eqn:min-cond-1}, $\tilde{\Lambda}(r)=\lambda$ for $0\leq r \leq R$.

For $r>R$, use the divergence theorem in $B_r(v)$ to obtain that
\begin{equation}
\label{eqn:div-th}
\int_{B_r(v)}\Delta \Lambda(x) dx=\int_{\partial B_r(v)}\nabla \Lambda(x)\cdot \d \vec{n}=|\partial B_r(v)| \tilde{\Lambda}'(r).
\end{equation}
From $\Delta_xA(\dist(x, y))\geq0$ and \eqref{lambdaeq}, it follows that $\Delta\Lambda(x)\geq 0$ for all $x$. Consequently, using the equation above, we infer that $\tilde{\Lambda}$ is a non-decreasing function, and conclude that
\[
\begin{cases}
\tilde{\Lambda}(r)=\lambda \qquad\text{if }0\leq r\leq R,\\[3pt]
\tilde{\Lambda}(r)\geq \lambda\qquad\text{if }r>R.
\end{cases}
\]
This shows that the necessary condition \eqref{eqn:min-cond-2} for a local minimizer is also satisfied.
\end{proof}

The following proposition shows that for potentials with $\Delta A>0$, radially symmetric local minimizers must have a geodesic ball as their support.
\begin{prop}
Let $K$ be an interaction potential in the form \eqref{eqn:K-Na}, where $A:[0,\infty) \to \bbr$ satisfies $\Delta_x A(\dist(x, y))>0$ for all $x, y\in\bbh^\dm$. If $\rho$ is a bounded, radially symmetric local minimizer of the interaction energy, then there exists $R>0$ such that $\supp(\rho)=B_R(v)$.
\end{prop}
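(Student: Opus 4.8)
The plan is to exploit the strict subharmonicity of $\Lambda$ on the complement of $\supp(\rho)$, which is forced here by the hypothesis $\Delta_x A>0$, together with the Euler--Lagrange conditions \eqref{eqn:min-cond} and a maximum-principle argument. Since $\rho$ is a local minimizer it satisfies \eqref{eqn:min-cond}, so $\Lambda\equiv\lambda$ on $\supp(\rho)$ and $\Lambda\ge\lambda$ a.e.\ on its complement. Because $\rho$ is bounded, its Newtonian part $G\ast\rho$ is continuous, so $\Lambda$ is continuous; hence in fact $\Lambda=\lambda$ on $\partial\,\supp(\rho)$ and $\Lambda\ge\lambda$ on the open set $U:=\bbh^\dm\setminus\supp(\rho)$. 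Finally, by the second line of \eqref{eqn:DeltaLambda}, on $U$ we have $\Delta\Lambda(x)=\int_{\bbh^\dm}\Delta_x A(\dist(x,y))\rho(y)\,\d y>0$, the strict inequality coming from $\Delta_x A>0$, $\rho\ge0$ and $\int\rho=1$. Thus $\Lambda$ is strictly subharmonic on $U$.

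The core step is to show that $U$ has no bounded connected component. Suppose $V\subset U$ were such a component. Then $\partial V\subset\partial\,\supp(\rho)$, so $\Lambda=\lambda$ on $\partial V$, while $\Delta\Lambda>0$ on $V$. The maximum principle for subharmonic functions forbids an interior maximum, so $\max_{\overline V}\Lambda$ is attained on $\partial V$ and equals $\lambda$; hence $\Lambda\le\lambda$ on $V$. Combined with $\Lambda\ge\lambda$ on $U$ this gives $\Lambda\equiv\lambda$ on $V$, whence $\Delta\Lambda\equiv0$ on $V$, contradicting $\Delta\Lambda>0$. Therefore every connected component of $U$ is unbounded.

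Next I would convert this into a statement about the support using radial symmetry. Since $\rho$ is radial, each sphere $\partial B_r(v)$ is either entirely contained in $\supp(\rho)$ or entirely contained in $U$; let $S\subset[0,\infty)$ be the set of radii of the former type. If $S$ were not an interval of the form $[0,R]$, there would be a \emph{gap}: either a central ball $B_a(v)$ with $a>0$, or an annulus $\{a<\dist(\cdot,v)<b\}$ with $0<a<b<\infty$, lying in $U$ and enclosed on the outside by a full support sphere (such a sphere lies entirely in $\supp(\rho)$ precisely because $\rho$ is radial). Either region is then a bounded connected component of $U$, contradicting the previous step. Hence $\supp(\rho)=\overline{B_R(v)}$ for some $R\in(0,\infty]$, where $R>0$ because $\rho$ is a bounded probability density and so its support has positive measure.

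The remaining, and I expect most delicate, point is to exclude $R=\infty$, i.e.\ $\supp(\rho)=\bbh^\dm$ (equivalently, to show $U\neq\emptyset$). If the minimizer is already known to be compactly supported --- as for the global minimizers constructed in Section \ref{sect:existence} --- this is automatic. In general one must rule out an everywhere-positive solution of \eqref{eqn:int-eq-gen0}: here I would combine the decay at infinity of the Newtonian part $G\ast\rho$ (which tends to $0$ since $\Phi(\theta)\to0$ and $\rho$ has unit mass) with the identity $G\ast\rho=\lambda-A\ast\rho$ coming from $\Lambda\equiv\lambda$, and with the growth of the attraction encoded by $\int_{B_r(v)}\Delta_x A(\dist(x,v))\,\d x=\dm\alpha(\dm)\sinh^{\dm-1}(r)A'(r)$, to contradict the integrability of $\rho$. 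Controlling this interplay for a general $A$ with $\Delta A>0$ is the main obstacle; by contrast the maximum-principle argument above is routine once the continuity of $\Lambda$ and the classical sense of $\Delta\Lambda>0$ on $U$ are justified (the latter via the local integrability of $F(\dist(x,\cdot))$ assumed in Theorem \ref{thm:mov-planes} and interior elliptic regularity).
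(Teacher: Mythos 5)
Your argument is correct in substance but follows a different technical route from the paper's. The paper reduces everything to one radial variable: writing $\supp(\rho)=\bigcup_{r\in C}\partial B_r(v)$ and $\tilde{\Lambda}(r)=\Lambda|_{\partial B_r(v)}$, it uses the divergence theorem on geodesic balls (equation \eqref{eqn:div-th}) to get $\tilde{\Lambda}'(r)>0$ for $r\notin C$, then decomposes $C$ into disjoint closed intervals; two or more intervals would force $\tilde{\Lambda}(b_1)=\tilde{\Lambda}(a_2)=\lambda$ across a region where $\tilde{\Lambda}$ is strictly increasing, and a central hole would force $\tilde{\Lambda}(0)<\lambda$, violating \eqref{eqn:min-cond-2}. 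Your maximum-principle argument on bounded connected components of $U=\bbh^\dm\setminus\supp(\rho)$ is the multidimensional version of the same strict-subharmonicity mechanism, and it treats annular gaps and a central hole uniformly; the price is that you need continuity of $\Lambda$ and a classical sense for $\Delta\Lambda>0$ on $U$ (which you correctly flag), whereas the paper's radial version only needs the flux identity \eqref{eqn:div-th}. Your approach also sidesteps a small imprecision in the paper, namely that a general compact $C$ need not be a \emph{finite} union of closed intervals. The one step you leave open --- excluding $R=\infty$, i.e., compactness of the support --- is genuinely the weak link, but the paper does not address it either: it simply asserts that $C$ is compact ``as $\rho$ is bounded and radially symmetric,'' which does not follow from boundedness of the density. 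On that point you are more candid than the source; your proof is complete to the same degree as the paper's.
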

\begin{proof}
As $\rho$ is bounded and radially symmetric, its support can be expressed as 
\[
\supp(\rho)=\bigcup_{r\in C}\partial B_r(v),
\]
where $C\subset[0, \infty)$ is a compact set. 

First note that $\Lambda$ is radially symmetric (see \eqref{eqn:Lambda-t}) and $\tilde{\Lambda}(r)=\lambda $ for $r \in C$. Also, from 
\[
\Delta \Lambda(x)=
\begin{cases}
0\qquad&\text{if }x\in \supp(\rho),\\[3pt]
\displaystyle\int_{\bbh^\dm}\Delta_xA(\dist(x, y))\rho(y)\d y >0 \qquad&\text{if }x\not \in  \supp(\rho),
\end{cases}
\]
and \eqref{eqn:div-th}, we find that:
\begin{align}\label{incr}
\tilde{\Lambda}'(r)>0\qquad\text{if } r\not\in C.
\end{align}

Since $C$ is a compact set in $[0, \infty)$, we can express $C$ as a union of disjoint closed intervals as follows:
\[
C=[a_1, b_1]\cup[a_2, b_2]\cup\cdots \cup[a_k, b_k],
\]
where $0\leq a_1<b_1<a_2<b_2<\cdots <a_k<b_k$. If $k\geq2$, then
\[
\tilde{\Lambda}(b_1)=\tilde{\Lambda}(a_2) \; (=\lambda).
\]
However, this contradicts \eqref{incr}, so we conclude that $k=1$. Now assume that $0<a_1$. From \eqref{incr}, we have
\[
\tilde{\Lambda}(0)<\tilde{\Lambda}(a_1),
\]
but this cannot occur for a local minimizer (see \eqref{eqn:min-cond-2}). 

We conclude that $a_1=0$, and $C=[0, b_1]$. By setting $R=b_1$, we then have
\[
\supp(\rho)=\bigcup_{r\in [0, R]}\partial B_r(v)=B_R(v).
\]
\end{proof}


\subsection{Explicit forms of radially symmetric equilibria}\label{subsect:explicit}
We first find some explicit expressions of equilibria for certain interaction potentials in the form \eqref{eqn:K-Na}. These equilibria are radially symmetric and with $\supp (\rho) = D_R(v)$, $R>0$.

Denote 
\begin{equation}
\label{eqn:rot-sym}
\trho(\dist(v, x)):=\rho(x).
\end{equation}
Then, the integral equation \eqref{eqn:int-eq-gen0} reads:
\begin{equation}
\label{eqn:int-eq-gen}
\trho(\theta_x)=\int_0^R\int_{\bbs^\dm} \left(A''(\theta_{xy})+(\dm-1)\coth(\theta_{xy})A'(\theta_{xy})\right) \trho(\theta_y) \sinh^{\dm-1}\theta_y \, \d\sigma_y \d\theta_y, 
\end{equation}
that holds for all $x \in D_R(v)$, where $\theta_x:=\dist(v,x)$ and $\theta_{xy}:=\dist(x,y)$. Note that we used \eqref{a-a-14} to write:
\begin{align}\label{d-2-1}
\Delta_xA(\dist(x, y))=A''(\dist(x, y))+(\dm-1)\coth(\dist(x, y)) A'(\dist(x, y)).
\end{align}

The two main specific examples of attractive potentials investigated here are:
\medskip

\noindent(i) Constant Laplacian attraction:
\begin{align}\label{eqn:Ac}
\Ac(\theta)=\int_0^\theta\frac{\int_0^{\theta_*}\sinh^{\dm-1}\xi \d\xi}{\sinh^{\dm-1}\theta_*}\,  \d \theta_*.
\end{align}
This is a very important example for applications; $\Ac$ has the property that 
\begin{equation}
\label{eqn:DeltaAc}
\Delta_{x} \Ac(\dist(x,y))=1,
\end{equation}
and leads to equilibria that are constant within their support. \\

\noindent(ii) Hyperbolic-cosine attraction:
\begin{align}\label{eqn:Ah}
\Ah(\theta)
=\frac{\cosh\theta-1}{\dm}.
\end{align}
This potential satisfies (use \eqref{d-2-1}):
\[
\Delta_x\Ah(\dist(x, y))=\cosh(\dist(x, y)).
\] 
We will show that the energy functional in this case is strictly convex, and we will find explicitly the global minimizer.


\paragraph{Constant Laplacian attraction.}Consider the interaction potential \eqref{eqn:K-Na} with attraction given by \eqref{eqn:Ac}.  Since $\Ac$ satisfies \eqref{eqn:DeltaAc}, it is immediate to check that the following density distribution is a solution of \eqref{eqn:int-eq-gen}:
\begin{align}\label{d-15}
\rhoc(x)=
\begin{cases}
1\qquad \text{ if }\dist(v, x)\leq \Rc,\\[2pt]
0\qquad \text{ if }\dist(v, x)>\Rc,
\end{cases}
\end{align}
where $\Rc$ satisfies
\begin{align}\label{d-16}
\mathrm{Vol}(D_{\Rc}(v))=\dm \alpha(\dm)\int_0^{\Rc}\sinh^{\dm-1}\theta \d \theta=1.
\end{align}
Hence, $\rhoc$ is an equilibrium solution, and by Proposition \ref{prop:local-min}, it satisfies the necessary conditions for a local minimizer. This configuration was studied in \cite{FeZh2019} as an attractor for the aggregation equation \eqref{eqn:model}.

\paragraph{Hyperbolic-cosine attraction.}  We consider now the interaction potential \eqref{eqn:K-Na} with attraction given by \eqref{eqn:Ah}. We will show that the following density distribution is a global minimizer of the interaction energy:
\begin{align}\label{d-8}
\rhoh(x)=\begin{cases}
\displaystyle\frac{\cosh (\dist(v, x))}{\alpha(\dm)\sinh^\dm\Rh} &\qquad\text{ if }\dist(v, x)\leq \Rh,\\[10pt]
0 &\qquad\text{ if }\dist(v, x)>\Rh,
\end{cases}
\end{align}
where $\Rh>0$ satisfies
\begin{align}\label{d-8-1}
\dm \alpha(\dm)\int_0^{\Rh}\sinh^{\dm-1}\theta \cosh^2\theta \, \d \theta=1.
\end{align}

We will show that \eqref{d-8} is a solution of \eqref{eqn:int-eq-gen}. By rotational symmetry, use notation:
\[
\trhoh(\dist(v, x)):=\rhoh(x).
\]
The hyperbolic law of cosines on $\bbh^\dm$ states:
\begin{align}\label{d-10}
\cosh (\dist(x, y))=\cosh(\dist(x, v))\cosh(\dist(y, v))-\sinh(\dist(x, v))\sinh(\dist(y, v))\cos \angle(xvy).
\end{align}

From \eqref{d-10} we compute, using polar coordinates for $y$ and the rotational symmetry of $\rhoh$:
\begin{align*}
& \int_{\bbh^\dm}\cosh(\dist(x, y))\rhoh(y)\d y \\
& \quad = \int_0^{\Rh}\int_{\bbs^{\dm-1}}\left(\cosh(\dist(x, v))\cosh\theta-\sinh(\dist(x, v))\sinh\theta \cos \angle(xvy)\right)\trhoh(\theta)\sinh^{\dm-1}\theta \, \d \sigma_y \d \theta,
\end{align*} 
where we also used that $\supp(\rhoh)=D_{\Rh}(v)$. Then, using 
\begin{align*}
\int_{\bbs^{\dm-1}}\cos\angle(xvy)\d \sigma_y&=\frac{1}{2}\int_{\bbs^\dm}(\cos \angle (xvy) +\cos \angle (xv(-y)) \d \sigma_y\\
&=\frac{1}{2}\int_{\bbs^\dm}(\cos \angle (xvy)+\cos(\pi+\angle(xvy)) \d \sigma_y \\
&=0,
\end{align*}
in the equation above, we have
\begin{equation}
\label{eqn:int-cosh}
 \int_{\bbh^\dm}\cosh(\dist(x, y))\rhoh(y)\d y = \dm\alpha(\dm) \cosh (\dist(v, x))\int_0^{\Rh}\cosh\theta \trhoh(\theta) \sinh^{\dm-1}\theta \, \d \theta.
\end{equation}
Consequently, to show that $\rhoh$ is a solution of \eqref{eqn:int-eq-gen}, we need to show:
\[
\trhoh(\dist(v,x)) = \dm\alpha(\dm) \cosh (\dist(v, x))\int_0^{\Rh}\cosh\theta \trhoh(\theta) \sinh^{\dm-1}\theta \, \d \theta.
\]
Indeed, by substituting \eqref{d-8} into the right-hand-side above, one computes:
\begin{align*}
& \dm\alpha(\dm)\cosh(\dist(v, x))\int_0^{\Rh}\frac{\cosh^2\theta \sinh^{\dm-1}\theta }{\alpha(\dm)\sinh^\dm  \Rh}d\theta\\[2pt]
&=\frac{\cosh(\dist(v, x))}{\alpha(\dm)\sinh^\dm  \Rh} \, \underbrace{\dm\alpha(\dm)\int_0^{\Rh}\cosh^2\theta\sinh^{\dm-1}\theta d\theta}_{=1 \text{ by \eqref{d-8-1}}} \\[1pt]
&=\trhoh(\dist(v,x)).
\end{align*}
This shows that $\rhoh$ is an equilibrium solution of model \eqref{eqn:model}.  By Proposition \ref{prop:local-min}, it also satisfies the necessary conditions to be a local energy minimizer. In Section \ref{sec:3.3} we will prove that the energy functional corresponding to this potential is strictly convex and hence, $\rhoh$ is a global energy minimizer.

%


\subsection{Hyperbolic cosine attraction: convexity of the energy}\label{sec:3.3}
In this subsection we consider the potential \eqref{eqn:K-Na} with hyperbolic cosine attraction given by \eqref{eqn:Ah}, and show that the equilibrium solution $\rhoh$ from \eqref{d-8} is a global energy minimizer in $\calA:= \calPc_c(\bbh^\dm) \cap H^{-1}(\bbh^\dm)$.  

Consider the energy \eqref{eqn:energy} defined on $\calA$. Then use Remark \ref{rmk:H-1} (see \eqref{eqn:H-1norm}) to write the energy as
\begin{align}
E[\rho]&=\frac{1}{2}\iint_{\bbh^\dm\times \bbh^\dm}(G(x, y)+\Ah(\dist(x, y))) \rho(x) \rho(y) \d x \d y \nonumber \\ 
&=\frac{1}{2}\|\rho\|^2_{H^{-1}(\bbh^\dm)}+ \frac{1}{2 \dm}\calK[\rho] - \frac{1}{2\dm}, \label{eqn:Eh}
\end{align}
where the functional $\calK[\rho]$ is defined by
\begin{equation}
\label{eqn:calK}
\calK[\rho]=\iint_{\bbh^\dm\times \bbh^\dm} \cosh(\dist(x, y))\rho(x)\rho(y) \d x \d y.
\end{equation}

First note that the repulsion component of the energy is given in terms of the square of the $H^{-1}$ norm, which is strictly convex. Since the last term in \eqref{eqn:Eh} is simply a constant, it remains to investigate the convexity of the functional $\calK$.

By \eqref{eqn:zcomp-dist}, we write
\begin{align*}
\calK[\rho]=
&=\iint_{\bbh^\dm\times \bbh^\dm} (x_0y_0-x_1y_1-\cdots-x_\dm y_\dm)\rho(x)\rho(y)  \d x \d y\\
&=\left\langle\int_{\bbh^\dm}(x_0, x_1, \cdots, x_\dm)\rho(x)\d x,\int_{\bbh^\dm}(y_0, y_1, \cdots, y_\dm)\rho(y) \d y\right\rangle\\
&=\left\|\int_{\bbh^\dm}(x_0, x_1, \cdots, x_\dm)\rho(x)\d x\right\|^2,
\end{align*}
where we also used \eqref{eqn:iproduct-L} and \eqref{eqn:norm-L}.

Denote 
\begin{equation}
\label{eqn:c-rho}
\crho:=\int_{\bbh^\dm}(x_0, x_1, \cdots, x_\dm)\rho(x)\d x,
\end{equation}
and hence, write
\begin{equation}
\label{eqn:calK-c}
\calK[\rho]= \left\| \crho \right\|^2.
\end{equation}

By a direct calculation, we then have
\begin{align} 
&t\calK[\rho_1]+(1-t)\calK[\rho_2]-\calK[t\rho_1+(1-t)\rho_2] = t\|c_{\rho_1}\|^2+(1-t)\|c_{\rho_2}\|^2-\| c_{t\rho_1+(1-t)\rho_2} \|^2   \nonumber \\
&\hspace{2cm} = t\|c_{\rho_1}\|^2+(1-t)\|c_{\rho_2}\|^2-\| t c_{\rho_1} + (1-t) c_{\rho_2}\|^2 \nonumber \\
&\hspace{2cm} = t(1-t)\|c_{\rho_1}\|^2+t(1-t)\|c_{\rho_2}\|^2-2t(1-t) \left \langle c_{\rho_1}, c_{\rho_2}\right \rangle \nonumber \\
&\hspace{2cm} = t(1-t)\|c_{\rho_1}-c_{\rho_2}\|^2, \label{eqn:conv-comb}
\end{align}
for any density functions $\rho_1$ and $\rho_2$. Note that the right-hand-side above is not necessarily non-negative, as the inner product  \eqref{eqn:iproduct-L} is not positive definite. So the positivity of this term needs to be shown, which is what we will focus on next.

Consider the following set:
\[
\calPcz(\bbh^\dm):=\left\{\rho\in\calPc(\bbh^\dm): \int_{\bbh^\dm} x_j  \rho(x)\d x=0, \text{ for all } 1 \leq j \leq \dm \right\}.
\]
\begin{lem}\label{transdensity}
For any $\rho\in\calPc(\bbh^\dm)$, define $\crhoh$ to be the normalized $\crho$, i.e., 
\[
\crhoh= \frac{\crho}{\| \crho\|}.
\]
Then,
\[
f_{\#}\rho\in\calPcz(\bbh^\dm),\qquad \text{ where } f(x)=x-' \crhoh.
\]
\end{lem}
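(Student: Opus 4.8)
The plan is to reduce the vanishing-moment conditions defining $\calPcz$ to a single identity for the ambient centre $c_{f_\#\rho}$ of the pushed-forward measure, and then to exploit the fact that $f$ is realised by a Lorentz boost matrix, hence is \emph{linear} on $\bbr^{\dm+1}$.

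First I would verify that $\crhoh$ is well defined and, crucially, that it lies on $\bbh^\dm$, so that the expression $x -' \crhoh$ is meaningful. By \eqref{eqn:calK-c}, $\|\crho\|^2 = \calK[\rho] = \iint \cosh(\dist(x,y))\,\rho(x)\rho(y)\,\d x\d y \geq 1$, since $\cosh \geq 1$ and $\rho$ is a probability density (I tacitly assume $\calK[\rho]<\infty$, so that $\crho$ is finite; this holds for the compactly supported densities of interest). Moreover $(\crho)_0 = \int_{\bbh^\dm} x_0\,\rho(x)\,\d x \geq 1 > 0$, so $\crho$ is timelike and future-directed. Consequently $\crhoh = \crho/\|\crho\|$ satisfies $\langle \crhoh,\crhoh\rangle = 1$ and $(\crhoh)_0>0$, i.e. $\crhoh\in\bbh^\dm$. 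Since $x\mapsto x-'\crhoh = x +'(-\crhoh)$ is an isometry, $f_\#\rho$ is again an absolutely continuous probability measure (with compact support if $\rho$ has), hence $f_\#\rho\in\calPc(\bbh^\dm)$.

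Next I would turn $f$ into a linear map. Unwinding Definition \ref{defn:+prime} and \eqref{eqn:-prime}, and using $\widehat{(-y)} = -\hat y$, one gets $x -' y = F_{\hat y}(x)$; thus with $w:=\widehat{\crhoh}$ we have $f(x) = F_w(x) = B(w)x$. By the change-of-variables formula for pushforwards, for each $1\le j\le \dm$,
\[
\int_{\bbh^\dm} x_j \,\d(f_\#\rho)(x) = \int_{\bbh^\dm} f(x)_j\,\rho(x)\,\d x = \big(c_{f_\#\rho}\big)_j, \qquad c_{f_\#\rho}:=\int_{\bbh^\dm} f(x)\,\rho(x)\,\d x,
\]
so it suffices to show that the last $\dm$ components of $c_{f_\#\rho}$ vanish. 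Linearity of $f$ is the key point: pulling $B(w)$ out of the integral gives $c_{f_\#\rho} = B(w)\crho = \|\crho\|\,B(w)\crhoh = \|\crho\|\,F_w(\crhoh)$. Finally I would invoke the translation identity $F_{\hat z}(z)=v$ (recorded just after Definition \ref{defn:+prime}) with $z=\crhoh$, which yields $F_w(\crhoh)=F_{\widehat{\crhoh}}(\crhoh)=v=(1,0,\dots,0)$. Hence $c_{f_\#\rho}=\|\crho\|\,v$, whose spatial components all vanish, and the claim follows.

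The only genuinely delicate step is showing $\crhoh\in\bbh^\dm$. Since the Lorentzian form is indefinite, $\crho$ could a priori be spacelike or null, in which case $\crho/\|\crho\|$ would not sit on the hyperboloid and $-'\crhoh$ would be undefined. The identity $\|\crho\|^2=\calK[\rho]$ combined with the elementary bound $\calK[\rho]\ge 1$ is exactly what excludes this; everything afterwards is linear algebra and the translation identity.
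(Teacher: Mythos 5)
Your proof is correct, and its core step is genuinely different from the paper's. The paper proves the vanishing of the spatial moments by brute force: it writes out $(x-'\crhoh)_j$ via the coordinate formula \eqref{a-a-10}, integrates term by term against $\rho$, and verifies the cancellation using $(\crhoh)_1^2+\cdots+(\crhoh)_\dm^2=(\crhoh)_0^2-1$. You instead observe that $x-'\crhoh=F_{\widehat{\crhoh}}(x)=B(\widehat{\crhoh})\,x$ is the restriction of a \emph{linear} map on $\bbr^{\dm+1}$, so the ambient centre transforms linearly, $c_{f_\#\rho}=B(\widehat{\crhoh})\,\crho=\|\crho\|\,F_{\widehat{\crhoh}}(\crhoh)$, and then the identity $F_{\hat z}(z)=v$ (Lemma \ref{La.3}) finishes the argument in one line. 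This is cleaner and makes transparent \emph{why} the spatial moments vanish: the boost that centres $\crhoh$ at the vertex centres the whole first-moment vector. Your treatment of the preliminary point — that $\crhoh$ actually lies on $\bbh^\dm$ — is also a mild improvement: the paper argues $\langle\crho,\crho\rangle\ge 1$ via the convex hull of $\bbh^\dm$, whereas you get it directly from $\|\crho\|^2=\calK[\rho]\ge 1$ (since $\cosh\ge 1$), together with $(\crho)_0\ge 1$ for future-directedness; both arguments share the same implicit finiteness assumption on $\crho$, which you correctly flag and which is harmless in the compactly supported setting where the lemma is used.
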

\begin{proof}
Write $\crho=(c_0,c_1,\cdots,c_\dm )$ using coordinates, i.e., 
\begin{equation}
\label{eqn:c-alpha}
c_k=\int_{\bbh^\dm} x_k \, \rho(x) \d x,\qquad\text{ for all } 0\leq k \leq \dm.
\end{equation}
First note that $\langle \crho, \crho \rangle >0$, so $\| \crho\| $ is real-valued. Indeed, it is easy to show that $ \crho \in\mathrm{conv}(\bbh^\dm)$, where 
$\mathrm{conv}(\bbh^\dm)$ denotes the convex hull of $\bbh^\dm$, defined as:
\[
 \mathrm{conv}(\bbh^\dm):=\{(x_0, x_1, \cdots, x_\dm) \in \bbr^{\dm+1} : x_0^2-x_1^2-\cdots-x_\dm^2\geq1\}.
\]
Consequently, 
\[
\langle \crho, \crho \rangle = c_0^2-c_1^2-\cdots-c_\dm ^2\geq1>0.
\] 

By a change of coordinates, one gets:
\begin{equation}
\label{eqn:ch-coord}
\int_{\bbh^\dm} x_jf_\# \rho(x)\d x=\int_{\bbh^\dm} (x-'\crhoh)_j\rho(x)\d x, \qquad \text{ for all } 1\leq j \leq \dm.
\end{equation}
To have $f\#\rho\in\calPcz(\bbh^\dm)$, the expression above needs to be $0$ for all $1\leq j \leq \dm$.

By \eqref{eqn:-prime} and \eqref{a-a-10}, we compute:
\[
\int_{\bbh^\dm} (x-'\crhoh)_j\rho(x)dx=\int_{\bbh^\dm} \left(x_j-x_0(\crhoh)_j+\frac{(\crhoh)_j}{(\crhoh)_0+1}(x_1(\crhoh)_1+\cdots+x_\dm(\crhoh)_\dm)\right)\rho(x)dx.
\]
From here, by the definition of $\crhoh$ and \eqref{eqn:c-alpha}, we get:
\begin{align}
\int_{\bbh^\dm} (x-'\crhoh)_j\rho(x)dx&=c_j-c_0(\crhoh)_j+\frac{(\crhoh)_j}{(\crhoh)_0+1}(c_1(\crhoh)_1+\cdots+c_\dm (\crhoh)_\dm) \nonumber \\
&= \| \crho \| (\crhoh)_j - \| \crho\| (\crhoh)_0(\crhoh)_j + \frac{ \| \crho \| (\crhoh)_j}{(\crhoh)_0+1}((\crhoh)_1^2+\cdots+(\crhoh)_\dm^2) \nonumber \\
&=\| \crho \| \Big((\crhoh)_j-(\crhoh)_0(\crhoh)_j+(\crhoh)_j((\crhoh)_0-1) \Big) \nonumber \\
& =0, \label{eqn:rhs-zero}
\end{align}
where for the third equal sign we used that $\crhoh$ has norm $1$, which implies $(\crhoh)_1^2+\cdots+(\crhoh)_\dm^2 = (\crhoh)_0^2 - 1$.

The conclusion now follows from \eqref{eqn:ch-coord} and \eqref{eqn:rhs-zero}.
\end{proof}

From \eqref{eqn:calK} and the fact that the operation $-'$ is an isometry, we get:
\begin{align*}
\calK[f_\#\rho]&=\iint_{\bbh^\dm \times \bbh^\dm}\cosh(\dist(x, y))f_\#\rho(x)f_\#\rho(y)\d x\d y\\
&=\iint_{\bbh^\dm \times \bbh^\dm}\cosh(\dist(x-'\crhoh, y-'\crhoh))\rho(x)\rho(y) \d x \d y\\
&=\iint_{\bbh^\dm \times \bbh^\dm}\cosh(\dist(x, y))\rho(x)\rho(y)\d x \d y\\
&=\calK[\rho].
\end{align*} 
Since $\calK[\rho]=\calK[f_\#\rho]$, we can restrict $\calK$ to  $\calPcz(\bbh^\dm) \subset \calPc(\bbh^\dm)$ for the purpose of investigating its minimizers. 

\begin{lem}\label{lem:Kconvex} The functional $\calK$ defined in \eqref{eqn:calK} is convex on $\calPcz(\bbh^\dm)$.
\end{lem}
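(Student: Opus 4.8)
The plan is to reduce convexity of $\calK$ entirely to the identity \eqref{eqn:conv-comb}, which already computes the convexity defect exactly:
\[
t\calK[\rho_1]+(1-t)\calK[\rho_2]-\calK[t\rho_1+(1-t)\rho_2]=t(1-t)\,\|c_{\rho_1}-c_{\rho_2}\|^2.
\]
Since $t(1-t)\geq 0$ for $t\in[0,1]$, the functional $\calK$ is convex on $\calPcz(\bbh^\dm)$ precisely when $\|c_{\rho_1}-c_{\rho_2}\|^2\geq 0$ for all $\rho_1,\rho_2\in\calPcz(\bbh^\dm)$. Thus the whole statement comes down to controlling the sign of this Lorentzian quadratic form.

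First I would record that $\calPcz(\bbh^\dm)$ is itself convex: its defining constraints $\int_{\bbh^\dm} x_j\,\rho(x)\,\d x=0$ for $1\le j\le \dm$ are linear in $\rho$, so $t\rho_1+(1-t)\rho_2$ again lies in $\calPcz(\bbh^\dm)$, and the computation \eqref{eqn:conv-comb} may be applied within the set. The key observation is then that the restriction to $\calPcz(\bbh^\dm)$ is exactly what renders the a priori indefinite Lorentzian form nonnegative. For $\rho\in\calPcz(\bbh^\dm)$ the spatial components of $\crho$ vanish by definition, so $\crho=((\crho)_0,0,\dots,0)$ is purely timelike. Consequently, for $\rho_1,\rho_2\in\calPcz(\bbh^\dm)$ the difference $c_{\rho_1}-c_{\rho_2}=((c_{\rho_1})_0-(c_{\rho_2})_0,0,\dots,0)$ has only a $0$-th component, and therefore
\[
\|c_{\rho_1}-c_{\rho_2}\|^2=\big((c_{\rho_1})_0-(c_{\rho_2})_0\big)^2\geq 0.
\]
Substituting this into \eqref{eqn:conv-comb} gives the convexity defect $t(1-t)\big((c_{\rho_1})_0-(c_{\rho_2})_0\big)^2\geq 0$, which is the claim.

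The main difficulty here is conceptual rather than computational: on the full space $\calPc(\bbh^\dm)$ the quantity $\|c_{\rho_1}-c_{\rho_2}\|^2$ can be \emph{negative}, since the Lorentzian inner product \eqref{eqn:iproduct-L} is not positive definite, and so $\calK$ need not be convex there. What makes the argument go through is that Lemma \ref{transdensity}, together with the isometry invariance $\calK[f_\#\rho]=\calK[\rho]$ established just above, legitimizes restricting attention to $\calPcz(\bbh^\dm)$ without loss of generality. Once on $\calPcz(\bbh^\dm)$, the timelike structure of $\crho$ makes the form manifestly nonnegative, and no further estimate is required.
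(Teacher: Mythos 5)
Your proof is correct and follows essentially the same route as the paper: both arguments note that $\calPcz(\bbh^\dm)$ is convex, that for $\rho\in\calPcz(\bbh^\dm)$ the vector $\crho$ has vanishing spatial components so that $\|c_{\rho_1}-c_{\rho_2}\|^2=((c_{\rho_1})_0-(c_{\rho_2})_0)^2\geq0$, and then conclude via the identity \eqref{eqn:conv-comb}.
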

\begin{proof}
Take $\rho_1,\rho_2\in\calPcz(\bbh^\dm)$; also note here that $\calPcz(\bbh^\dm)$ is a convex set. Then, $c_{\rho_1}=\alpha_1(1, 0, \cdots, 0)$ and $c_{\rho_2}=\alpha_2(1, 0, \cdots, 0)$, for some positive constants $\alpha_1,\alpha_2>0$. Therefore,
\[
\|c_{\rho_1}-c_{\rho_2} \|^2=(\alpha_1-\alpha_2)^2\geq0,
\]
and by using \eqref{eqn:conv-comb}, we find that:
\[
t\calK[\rho_1]+(1-t)\calK[\rho_2]\geq \calK[t\rho_1+(1-t)\rho_2], \qquad \text{ for all } 0<t<1.
\]
This shows the claim.
\end{proof}

Next, we state and prove the main result of this subsection.
\begin{thm}
\label{thm:convexity}
Consider the interaction potential \eqref{eqn:K-Na} with hyperbolic cosine attraction given by \eqref{eqn:Ah}. Then, the equilibrium solution $\rhoh$ from \eqref{d-8} is a global minimizer of the interaction energy on $\calA$.  
\end{thm}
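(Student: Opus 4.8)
The plan is to exploit the energy decomposition \eqref{eqn:Eh}, which writes $E$ (up to the additive constant $-\tfrac{1}{2\dm}$) as the sum of $\tfrac12\|\rho\|^2_{H^{-1}(\bbh^\dm)}$, which is strictly convex, and $\tfrac{1}{2\dm}\calK[\rho]$. The obstacle is that $\calK[\rho]=\|\crho\|^2$ is measured with respect to the indefinite Lorentzian product \eqref{eqn:iproduct-L}, so $\calK$ is \emph{not} convex on all of $\calPc(\bbh^\dm)$; this is exactly the issue flagged after \eqref{eqn:conv-comb}. The remedy is to restrict attention to the slice $\calPcz(\bbh^\dm)$, on which Lemma \ref{lem:Kconvex} gives convexity of $\calK$, and then to transfer the conclusion back to all of $\calA$ using the isometry invariance of the energy.

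First I would set up the reduction to $\calPcz(\bbh^\dm)$. Given an arbitrary $\rho\in\calA$, Lemma \ref{transdensity} produces the isometry $f(x)=x-'\crhoh$ with $f_\#\rho\in\calPcz(\bbh^\dm)$. Since $f$ is an isometry, it preserves compact support and the $H^{-1}$ norm, so $f_\#\rho\in\calPcz(\bbh^\dm)\cap\calA$, and by the isometry invariance \eqref{eqn:E-inv} we have $E[f_\#\rho]=E[\rho]$. Hence $\inf_{\calA}E=\inf_{\calPcz(\bbh^\dm)\cap\calA}E$, and it suffices to show that $\rhoh$ minimizes over the convex slice $\calPcz(\bbh^\dm)\cap\calA$. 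On this set $E$ is strictly convex, being the sum of the strictly convex functional $\tfrac12\|\cdot\|^2_{H^{-1}(\bbh^\dm)}$ and the convex functional $\tfrac{1}{2\dm}\calK$ (Lemma \ref{lem:Kconvex}), plus a constant.

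Next I would verify that $\rhoh$ is in fact the minimizer of this strictly convex functional on the slice. Two checks are needed. First, $\rhoh\in\calPcz(\bbh^\dm)$: because $\rhoh$ is radially symmetric about the vertex $v$, the integrals $\int_{\bbh^\dm} x_j\,\rhoh(x)\,\d x$ vanish for $1\le j\le \dm$ by oddness. Second, $\rhoh$ satisfies the first-order optimality (Euler--Lagrange) condition: since $\Delta_x\Ah(\dist(x,y))=\cosh(\dist(x,y))\ge 0$, Proposition \ref{prop:local-min} applies and gives \eqref{eqn:min-cond}, namely $\Lambda\equiv\lambda$ on $\supp(\rhoh)$ and $\Lambda\ge\lambda$ elsewhere. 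I would then close the argument by noting that for a convex functional this condition is sufficient for global optimality on the slice: for any competitor $\sigma\in\calPcz(\bbh^\dm)\cap\calA$, the first variation of $E$ at $\rhoh$ in the direction $\sigma-\rhoh$ equals $\int_{\bbh^\dm}\Lambda(x)\bigl(\sigma(x)-\rhoh(x)\bigr)\,\d x=\int_{\bbh^\dm}\Lambda\,\sigma\,\d x-\lambda\ge 0$, where I used $\Lambda\ge\lambda$ together with $\int\sigma=\int\rhoh=1$; convexity along the segment $(1-t)\rhoh+t\sigma$ then yields $E[\sigma]\ge E[\rhoh]$. Combined with the reduction above, $E[\rho]=E[f_\#\rho]\ge E[\rhoh]$ for every $\rho\in\calA$, so $\rhoh$ is a global minimizer (and strict convexity makes it the unique one on the slice, i.e.\ unique up to isometry).

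I expect the main obstacle to be conceptual rather than computational: precisely the non-convexity of $\calK$ stemming from the indefiniteness of the Lorentzian product, and the need to argue that passing to the slice $\calPcz(\bbh^\dm)$ loses no energy levels — this is what the isometry $f$ from Lemma \ref{transdensity} buys us. A secondary technical point is justifying the first-variation computation and the ``convexity plus variational inequality implies global minimality'' step rigorously at the level of measures (differentiability of $t\mapsto E[(1-t)\rhoh+t\sigma]$ along segments in $\calA$), but this is standard for the quadratic interaction energy.
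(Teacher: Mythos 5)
Your proposal is correct and follows essentially the same route as the paper: the decomposition \eqref{eqn:Eh} into the strictly convex $H^{-1}$ part and the functional $\calK$, convexity of $\calK$ on the slice $\calPcz(\bbh^\dm)$ via Lemma \ref{lem:Kconvex}, and the reduction to that slice through Lemma \ref{transdensity} together with the isometry invariance \eqref{eqn:E-inv}. The only difference is that you make explicit the step ``a critical point of a strictly convex functional satisfying \eqref{eqn:min-cond} is the global minimizer'' via a first-variation inequality (and you verify $\rhoh\in\calPcz(\bbh^\dm)$ by radial symmetry), both of which the paper leaves implicit.
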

\begin{proof}

As noted above, the square of the $H^{-1}$ norm, representing the repulsion component of the energy, is strictly convex on $\calA$. Since the attraction component $\calK$ is convex on $\calPcz(\bbh^\dm)$, the sum of the two functionals (and hence $E$) is strictly convex on $\calPcz(\bbh^\dm) \cap \calA$. We conclude from here that the critical point $\rho_{h}$ given by \eqref{d-8} is the unique global minimizer of $E$ in $\calPcz(\bbh^\dm) \cap \calA$. 

By Lemma \ref{transdensity}, any density can be translated by an isometry into $\calPcz(\bbh^\dm)$. Since the interaction energy is invariant under isometries (see \eqref{eqn:E-inv}), we infer that the global minimizer $\rho_h$ of $E[\cdot]$ on $\calPcz(\bbh^\dm)  \cap \calA$ is also a global minimizer of $E[\cdot]$ on $\calA$ itself.
\end{proof}

\begin{rmk}
\label{rmk:quadratic}
A similar result as in Theorem \ref{thm:convexity} holds in the standard Euclidean space $\bbr^\dm$-- see Theorem 2.4 in \cite{ChFeTo2015}. There, it is shown that for an interaction potential that consists of Newtonian repulsion and quadratic attraction (i.e., $A(x) = \frac{1}{2} |x|^2$), the interaction energy is strictly convex and the global energy minimizer consists in a uniform density supported on a ball. However, for the model on $\bbh^\dm$ we could not show the convexity of the energy for the attractive potential $\Ac$ (that leads to constant density equilibria), but we showed it instead for $\Ah$.
\end{rmk}



\section{Existence of compactly supported minimizers}
\label{sect:existence}

In this section we study the existence of compactly supported global minimizers of the energy \eqref{eqn:energy}. The arguments we use here follow the layout and the general technique used by Ca{\~n}izo {\em et al.} \cite{CaCaPa2015} for the Euclidean case. For this reason, we use similar notations, and we only highlight the main differences. 

%
%
%
%
%
%
%
%

Consider the interaction energy \eqref{eqn:energy} defined on $\calP(\bbh^\dm)$. Retaining the numbering from \cite{CaCaPa2015}, we make the following assumptions on the interaction potential $K$: \vspace{0.2cm}

\noindent$(\mathcal{H}1)$: $K$ is bounded from below by a finite constant $K_{min}<0$. \vspace{0.2cm}

\noindent$(\mathcal{H}2)$: $K$ is locally integrable, i.e., $\displaystyle{\int_{\dist(x, v)<R}|K(\dist(x, v))|dx<\infty}$, for any $R>0$. \vspace{0.2cm}

\noindent$(\mathcal{H}3)$: Interactions are symmetric, i.e., $\displaystyle{K(x,y) = K(\dist(x,y))}$ (by an abuse of notation), for all $x,y \in \bbh^\dm$. \vspace{0.2cm}

\noindent$(\mathcal{H}4)$: The limit $\displaystyle{K_\infty=\lim_{\theta\to\infty}K(\theta)}$ exists and $K$ is unstable. Here, $K$ is unstable if there exists $\rho \in \calP(\bbh^\dm)$ such that $E[\rho] < \frac{1}{2}K_\infty$. \vspace{0.2cm}

\noindent$(\mathcal{H}5)$: $K$ is lower semi-continuous. \vspace{0.2cm}

\noindent$(\mathcal{H}6)$: There exists $R_6>0$ such that $K$ is strictly increasing on $[R_6, \infty)$. \vspace{0.2cm}

Note that hypothesis $(\mathcal{H}3)$ has been used throughout the entire paper, but we list it here separately for consistency with the numbering in \cite{CaCaPa2015}. In the Euclidean case, this hypothesis shows in a weaker form in fact, specifically $K(x) = K(-x)$ for all $x \in \bbr^\dm$.

Assumptions $(\mathcal{H}1)-(\mathcal{H}6)$ were considered in \cite{CaCaPa2015} for the Euclidean space $\bbr^\dm$. Under such assumptions, the main result in \cite{CaCaPa2015} (Theorem 1.4) states that there exists a global minimizer of the energy in $\calP(\bbr^\dm)$. Moreover, there exists a uniform bound for the supports of all minimizers of $E$. A very similar result holds for $\bbh^\dm$. We state the result below and then we sketch its proof. 

\begin{thm}\label{Htheorem}
Assume that $K:[0, \infty)\to\bbr\cup\{\infty\}$ satisfies $(\mathcal{H}1)-(\mathcal{H}6)$. Then, there exists a global minimizer $\rho\in\mathcal{P}(\bbh^\dm)$ of the energy $E[\rho]$ defined in \eqref{eqn:energy}. In addition, there exists $\D>0$ (depending only on $K$ and the dimension $\dm$) such that every minimizer of $E$ has compact support with diameter at most $\D$.
\end{thm}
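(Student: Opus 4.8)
The plan is to follow the concentration-compactness strategy of \cite{CaCaPa2015}, with Euclidean translations replaced throughout by the hyperbolic isometries $F_w$ and the associated operation $+'$ developed in Section \ref{sect:prelim}. First I would fix a minimizing sequence $\rho_j \in \calP(\bbh^\dm)$ with $E[\rho_j] \to \inf_{\calP(\bbh^\dm)} E$; by $(\mathcal{H}4)$ this infimum lies strictly below $\frac{1}{2}K_\infty$, and by $(\mathcal{H}1)$ it is finite. The goal is to produce, after applying suitable isometries $F_{w_j}$ (which leave the energy unchanged by \eqref{eqn:E-inv}), a tight subsequence converging narrowly to some $\rho$, and then to invoke the lower semicontinuity granted by $(\mathcal{H}5)$ to get $E[\rho] \le \liminf_j E[\rho_j]$, so that $\rho$ is a global minimizer.

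The heart of the existence part is to exclude the vanishing and dichotomy alternatives of the concentration-compactness lemma. Vanishing, i.e. mass spreading thinly over all of $\bbh^\dm$, would force all pairwise distances to diverge and hence $E[\rho_j] \to \frac{1}{2}K_\infty$, contradicting the strict instability in $(\mathcal{H}4)$; here local integrability $(\mathcal{H}2)$ is what makes the energy of the instability-witnessing competitor well defined. Dichotomy, a splitting $\rho_j \approx \rho_j^1 + \rho_j^2$ into two lumps drifting infinitely far apart, is ruled out by the tail monotonicity $(\mathcal{H}6)$: once the two lumps are separated by more than $R_6$, displacing one of them toward the other by a hyperbolic translation $x \mapsto x +' u_k(s)$ strictly lowers the cross term $\iint K(\dist(x,y)) \, \d\rho^1 \, \d\rho^2$ while leaving the two self-energies fixed (again by \eqref{eqn:E-inv}), contradicting the near-optimality of $\rho_j$. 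This is exactly the content of the separation estimate (Lemma \ref{lemma:separation}), and re-centering the surviving lump by an isometry restores tightness.

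For the uniform diameter bound I would argue by contradiction. If a minimizer $\rho$ had support of diameter exceeding a threshold $\D$, to be fixed in terms of $R_6$ and $\dm$, then one can choose a coordinate index $k$ and a level $\ta$ so that the hypersurface $P_k(\ta)$ cuts $\supp(\rho)$ into two pieces, each of positive mass, whose $\pi_k$-coordinates differ by more than $R_6$. By Lemma \ref{l3.7} every inter-piece geodesic distance then exceeds $R_6$, so the separation argument above applies verbatim: for an appropriate choice of $s$, sliding one piece by $+' u_k(s)$ strictly decreases the energy, contradicting minimality. Letting the threshold $\D$ depend only on $K$ (through $R_6$ and $K_{min}$) and on $\dm$ yields the stated uniform bound.

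The step I expect to be the main obstacle is the quantitative core of the separation lemma: showing that the one-parameter family $s \mapsto x +' u_k(s)$ actually decreases the relevant geodesic distances, and by enough to produce a net energy gain. In $\bbr^\dm$, translating one cluster toward another along the connecting axis decreases every pairwise distance monotonically, which is geometrically transparent; on $\bbh^\dm$ the analogous monotonicity is not immediate, since $+'$ is neither commutative nor associative and the two clusters need not be aligned with the $u_k$-axis. The coordinate grid of Section \ref{sect:prelim}, together with the inequality $\dist(x,y) \ge |\pi_k(x)-\pi_k(y)|$ of Lemma \ref{l3.7} and the shift identity of Corollary \ref{c3.5}, is precisely the machinery I would use to control these distances and convert the heuristic into a rigorous monotone decrease. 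Verifying this carefully, rather than the soft concentration-compactness bookkeeping, is where the hyperbolic geometry does the real work.
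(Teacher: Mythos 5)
Your overall route differs from the paper's: you run concentration--compactness on a minimizing sequence over all of $\calP(\bbh^\dm)$, whereas the paper first minimizes over $\calP_\R(\bbh^\dm)$ (measures supported in $\overline{B_\R(v)}$, where tightness and Prokhorov give existence for free), then proves a uniform diameter bound $\D$ for these restricted minimizers, and finally shows a minimizer on $\calP_\D(\bbh^\dm)$ is global by approximation. Both strategies are viable in principle, and your identification of the separation estimate (Lemma \ref{lemma:separation}, via the computation of $(x-'(y-'u_k(\epsilon)))_0-(x-'y)_0$ and Lemma \ref{l3.7}) as the place where the hyperbolic geometry does the real work is accurate.

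However, there is a genuine gap in your diameter-bound argument. You claim that if $\mathrm{diam}(\supp\rho)>\D$ then one can find $k$ and a level $\ta$ so that $P_k(\ta)$ splits $\supp\rho$ into two positive-mass pieces whose $\pi_k$-coordinates differ by more than $R_6$. That deduction is false as stated: a large diameter does not produce a coordinate slab $\pi_k^{-1}([\ta-R_6,\ta+R_6])$ that is free of mass. The support could be connected (or have only small gaps) and still be very long, in which case every candidate slab meets $\supp\rho$ and the sliding construction has nothing to slide past. The separation lemma only forbids \emph{gaps} wider than $2R_6$; to convert ``no large gaps'' into a diameter bound one needs the second key ingredient of the paper, the mass-concentration estimate (Lemma \ref{lemma:mass}, i.e.\ \cite[Lemma 2.6]{CaCaPa2015}): every $x_0\in\supp(\rho_\R)$ carries at least mass $m>0$ in $B_r(x_0)$, with $m,r$ depending only on $K$. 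Since the total mass is $1$, the support is then covered by at most $\lceil 1/m\rceil$ geodesic balls of radius $2r$, and ordering their centres by $\pi_k$ and invoking the no-gap lemma between consecutive centres yields $\mathrm{diam}(\supp\rho)\leq([1/m]-1)(4r+2R_6)+4r$. Your proposal omits this lemma entirely (it is also what would be needed, in your concentration--compactness framework, to preclude the mass fragmenting into many small clusters rather than exactly two), so the uniform bound $\D$ does not follow from the argument as written.
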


The layout of the proof of the analogous theorem for $\bbr^\dm$ from \cite{CaCaPa2015} is the following (we write it for $\bbh^\dm$ instead):
\medskip

{\em Part 1.} Fix $\R\geq 0$ and set $\calP_\R(\bbh^\dm) = \{ \rho \in \calP(\bbh^\dm) : \supp(\rho) \subset \overline{B_R(v))} \}$. Show the existence of global minimizers on $\calP_\R(\bbh^\dm)$. This can be done as follows \cite[Lemma 2.1]{CaCaPa2015}. Consider a minimizing sequence $(\rho_k)_{k \geq 1}$ of $E$ restricted to $\calP_\R(\bbh^\dm)$. As $\calP_\R(\bbh^\dm)$ is tight and weakly-$*$ closed in $\calP(\bbh^\dm)$, by Prokhorov's Theorem, there exists a subsequence $(\rho_{k_l})_{l \geq 1}$ that converges weakly-$*$ to $\rho_\R \in \calP_\R(\bbh^\dm)$. By the weakly-$*$ lower semicontinuity of $E$, $\rho_\R$ is a global minimizer of $E$ on $\calP_\R(\bbh^\dm)$.
\medskip

{\em Part 2.} Show that there exists $\D >0$ (depending only on $K$ and $\dm$) such that for all $\R \geq 0$ and any global minimizer $\rho_\R$ of $E$ on $\calP_\R(\bbh^\dm)$, the diameter of the support of $\rho_\R$ is bounded by $\D$. This is the core of the proof and follows from several key lemmas which we will discuss after presenting the layout.
\medskip

{\em Part 3.} Consider $\rho'$ a global minimizer on $\calP_\D(\bbh^\dm)$, where $\D$ is the upper bound on the diameters of minimizers from Part 2. Show that $\rho'$ is in fact a global minimizer on $\calP(\bbh^\dm)$. This can be argued as follows \cite[Lemma 2.10]{CaCaPa2015}. First note that for any $\rho \in \calP(\bbh^\dm)$ with compact support, $E[\rho] \geq E[\rho']$. Indeed, for $\R \geq 0$ fixed, such that $\rho \in \calP_\R(\bbh^\dm)$, take $\rho_\R$ a global minimizer of $E$ on $\calP_\R(\bbh^\dm)$. Then, $E[\rho] \geq E[\rho_\R] \geq E[\rho']$, where the second inequality holds by the translation invariance of $E$ (as $\rho_\R$ can be translated to a measure in $\calP_\D(\bbh^\dm))$. Second, for any $\rho \in \calP(\bbh^\dm)$, approximate $\rho$ by compactly supported measures $\rho_k$ with $E[\rho_k] \to E[\rho]$ as $k \to \infty$. Since $E[\rho_k] \geq E[\rho']$, for all $k$, infer $E[\rho] \geq E[\rho']$ and conclude $\rho'$ global energy minimizer on $\calP(\bbh^\dm)$.
\smallskip

As noted above, Part 2 is the core of the proof. It is based on two fundamental lemmas. The first lemma states that for any point in the support of a minimizer $\rho_\R$ on $\calP_\R(\bbh^\dm)$, there has to be some mass not far from it. Specifically, the following result holds.
\begin{lem}\cite[Lemma 2.6]{CaCaPa2015}
\label{lemma:mass}
Suppose the potential $K$ satisfies hypotheses $(\mathcal{H}1)-(\mathcal{H}5)$. Then there exist constants $r,m>0$ (depending only on $K$) such that for all sufficiently large $R$, and all global minimizers $\rho_\R$ of the energy on $\calP_\R(\bbh^\dm)$ we have
\[
\int_{B_r(x_0)} \d \rho_\R(x) \geq m, \qquad \text{ for all } x_0 \in \supp(\rho_\R).
\]
\end{lem}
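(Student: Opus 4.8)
The plan is to pin down the value of the potential $\Lambda(x)=\int_{\bbh^\dm}K(\dist(x,y))\d\rho_\R(y)$ (see \eqref{eqn:Lambda}) on $\supp(\rho_\R)$, and then read off the mass bound from an elementary split of $\Lambda$ into a near part and a far part. First I would use the instability hypothesis $(\mathcal H4)$ to fix, once and for all, a \emph{compactly supported} reference measure $\bar\rho\in\calP(\bbh^\dm)$ with $E[\bar\rho]=:I<\tfrac12 K_\infty$. Producing such a $\bar\rho$ from the abstract unstable measure of $(\mathcal H4)$ is a truncation argument based on $(\mathcal H2)$ and $(\mathcal H5)$, identical to the one in \cite{CaCaPa2015}, so I would only cite it. Because $\bar\rho$ has compact support, for all sufficiently large $R$ an isometric copy of $\bar\rho$ (obtained through one of the maps $F_w$ of Lemma \ref{Lem-iso1}) lies in $\overline{B_R(v)}$; the invariance \eqref{eqn:E-inv} and the minimality of $\rho_\R$ then give $\lambda_\R:=2E[\rho_\R]\le 2I<K_\infty$. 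I would record the gap $2\delta:=K_\infty-2I>0$ for later use.

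Next I would extract the Euler--Lagrange information for the constrained minimizer. Differentiating $s\mapsto E[(1-s)\rho_\R+s\sigma]$ at $s=0^+$ along competitors $\sigma\in\calP_\R(\bbh^\dm)$ yields $\int\Lambda\,\d\sigma\ge\lambda_\R$; testing against point masses gives $\Lambda\ge\lambda_\R$ everywhere on $\overline{B_R(v)}$, while testing against $\rho_\R$ itself (using $\int\Lambda\,\d\rho_\R=2E[\rho_\R]=\lambda_\R$ and the sign of $\Lambda-\lambda_\R$) forces $\Lambda=\lambda_\R$ $\rho_\R$-almost everywhere. Since the equality set has full $\rho_\R$-measure it is dense in $\supp(\rho_\R)$, and the lower semicontinuity $(\mathcal H5)$ of $\Lambda$ then upgrades this to
\[
\Lambda(x_0)\le\lambda_\R\qquad\text{for \emph{every} } x_0\in\supp(\rho_\R).
\]

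The last step is the elementary estimate that produces $r$ and $m$. Since $\lim_{\theta\to\infty}K(\theta)=K_\infty$, I would choose $r>0$ (depending only on $K$) and $\kappa\in(0,\delta)$ so that $K(\theta)\ge K_\infty-\kappa$ for all $\theta\ge r$; note $K_\infty>K_{min}$ in the unstable case, so $\kappa$ can additionally be taken below $K_\infty-K_{min}$. Fixing $x_0\in\supp(\rho_\R)$ and setting $\epsilon:=\rho_\R(B_r(x_0))$, I would split $\Lambda(x_0)$ over $B_r(x_0)$ and its complement, bounding the first integrand below by $K_{min}$ (from $(\mathcal H1)$) and the second by $K_\infty-\kappa$, so that the previous display gives
\[
\lambda_\R\ge \Lambda(x_0)\ge K_{min}\,\epsilon+(K_\infty-\kappa)(1-\epsilon).
\]
Rearranging (the coefficient $K_{min}-K_\infty+\kappa$ is negative) and using $\lambda_\R\le 2I=K_\infty-2\delta$ yields $\epsilon\ge \dfrac{2\delta-\kappa}{K_\infty-K_{min}-\kappa}=:m>0$, a constant depending only on $K$. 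This is the desired bound, uniform in $R$ and in the choice of minimizer.

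The genuinely nonroutine inputs, which I expect to be the main obstacles, are (a) the approximation step yielding a compactly supported unstable $\bar\rho$, where local integrability $(\mathcal H2)$ and lower semicontinuity $(\mathcal H5)$ have to be used with some care, and (b) justifying the first-variation computation when $\Lambda$ is merely lower semicontinuous and possibly $+\infty$, so that only the lower bound $\Lambda\ge\lambda_\R$ is immediate and the almost-everywhere equality and its upgrade to all of $\supp(\rho_\R)$ rely on the sign/density argument above. Everything specific to $\bbh^\dm$ reduces to using the isometries $F_w$ (equivalently the operation $+'$) to place $\bar\rho$ inside $B_R(v)$, and to expressing the tail bound on $K$ through the intrinsic geodesic distance.
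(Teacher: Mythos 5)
Your proof is correct and follows essentially the same route the paper intends: the paper only cites \cite{CaCaPa2015} (Lemma 2.6) and notes that $(\mathcal{H}1)$ and $(\mathcal{H}4)$ are used to bound $K$ at short and long distances, which is precisely your near/far splitting of $\Lambda(x_0)$ over $B_r(x_0)$ and its complement, combined with the Euler--Lagrange bound $\Lambda(x_0)\le\lambda_\R$ on $\supp(\rho_\R)$ and the energy gap $2\delta=K_\infty-2I$ obtained from the unstable reference measure. The only $\bbh^\dm$-specific ingredient --- placing an isometric copy of the compactly supported reference measure inside $B_R(v)$ via the maps $F_w$ and invoking \eqref{eqn:E-inv} --- is exactly how the paper's isometry machinery is meant to be used, and your algebra (including the check that $m\le 1$ and that $K_{min}-K_\infty+\kappa<0$) is sound.
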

Note that the constants $m$ and $r$ are universal (they do not depend on $\R$). The proof of the lemma can be immediately adapted from that of \cite[Lemma 2.6]{CaCaPa2015}. We only point out here that for its proof, Hypotheses $(\mathcal{H}1)$ and $(\mathcal{H}4)$ are used in a fundamental way, to bound the interaction potential $K$ at short and long distances, respectively.

The second fundamental lemma states that minimizers $\rho_\R$ cannot have large gaps. The general idea of its proof is the same as for \cite[Lemma 2.7]{CaCaPa2015}, but the details are more involved. In place of the projection on the $k$-th coordinate (as used in $\bbr^\dm$), we will use the map $\pi_k$ defined in \eqref{eqn:pik}. This lemma is the only place where hypothesis $(\mathcal{H}6)$ is being used.

\begin{lem}(Generalization of \cite[Lemma 2.7]{CaCaPa2015})
\label{lemma:separation}
Assume that the potential $K$ satisfies $(\mathcal{H}1)-(\mathcal{H}6)$. Let $\R\geq0$ and suppose that $\rho_\R$ is a global minimizer of the energy \eqref{eqn:energy} on $\calP_\R(\bbh^\dm)$. Then the support of $\rho_\R$ cannot have ``gaps" larger than $2R_6$ in each coordinate  ($R_6$ is the constant from $(\mathcal{H}6)$): if $k\in\{1, \dots, \dm \}$ and $a_k\in\bbr$ is such that $\displaystyle{\pi_k^{-1}([a_k-R_6, a_k+R_6])\subseteq \bbh^\dm\backslash \supp(\rho_\R)}$, then either $\displaystyle{\pi_k^{-1}((-\infty, a_k-R_6])\subseteq \bbh^\dm\backslash \supp(\rho_\R)}$ or 
$\displaystyle{\pi_k^{-1}([a_k+R_6, \infty))\subseteq \bbh^\dm\backslash \supp(\rho_\R)}$.
\end{lem}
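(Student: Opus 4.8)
The plan is to argue by contradiction: assuming the gap hypothesis holds but \emph{neither} of the two rays is free of support, I will produce an admissible competitor of strictly smaller energy, contradicting the minimality of $\rho_\R$ on $\calP_\R(\bbh^\dm)$. Suppose then that $\pi_k^{-1}([a_k-R_6,a_k+R_6])$ misses $\supp(\rho_\R)$, yet both $\pi_k^{-1}((-\infty,a_k-R_6))$ and $\pi_k^{-1}((a_k+R_6,\infty))$ carry positive mass. Decompose $\rho_\R=\mu+\nu$, where $\mu$ is the restriction of $\rho_\R$ to the left ray and $\nu$ to the right ray; both are nonzero and compactly supported, since $\rho_\R$ is supported in $\overline{B_R(v)}$. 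For $x\in\supp(\mu)$ and $y\in\supp(\nu)$ we have $\pi_k(x)<a_k-R_6$ and $\pi_k(y)>a_k+R_6$, so Lemma \ref{l3.7} gives $\dist(x,y)\geq \pi_k(y)-\pi_k(x)>2R_6$. In particular every cross-distance lies in $(R_6,\infty)$, the region where $K$ is strictly increasing by $(\mathcal{H}6)$.

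Next I slide one cluster toward the other along the $k$-th coordinate grid. The two cases are symmetric, so I describe $a_k\geq 0$, where I translate $\nu$ leftward: set $\nu_s:=(T_s)_\#\nu$ with $T_s(y):=y+'u_k(-s)$ for small $s>0$. Two coordinate computations (of the type carried out for the reflection preceding Theorem \ref{thm:mov-planes}) are the heart of the matter. First, \textbf{admissibility}: for $y\in\supp(\nu)$ one has $\pi_k(y)>a_k+R_6>0$, hence $y_k>0$, and
\[
\cosh\dist(T_s(y),v)=(T_s(y))_0=y_0\cosh s-y_k\sinh s<y_0=\cosh\dist(y,v)
\]
for small $s>0$; thus $\supp(\nu_s)\subset\overline{B_R(v)}$ and $\mu+\nu_s\in\calP_\R(\bbh^\dm)$. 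Second, \textbf{distance decrease}: since $\cdot\,+'u_k(s)$ is an isometry and $(y+'u_k(-s))+'u_k(s)=y$ (Lemma \ref{La.8} together with $-u_k(-s)=u_k(s)$), I rewrite $\dist(x,T_s(y))=\dist(x+'u_k(s),y)$, and the same computation gives
\[
\cosh\dist(x+'u_k(s),y)=P\cosh s+Q\sinh s-C,
\]
with $P=x_0y_0-x_ky_k$, $\ Q=x_ky_0-x_0y_k=x_0y_0\big(\tanh\pi_k(x)-\tanh\pi_k(y)\big)$ and $C=\sum_{1\le j\le \dm,\,j\neq k}x_jy_j$. Because $\pi_k(x)<\pi_k(y)$ and $\tanh$ is increasing, $Q<0$, so the $s$-derivative at $s=0$ equals $Q<0$ and each cross-distance strictly decreases.

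Finally, since $\supp(\mu)$ and $\supp(\nu)$ are compact, $Q$ is bounded away from $0$ and the cross-distances stay above $R_6$ for all small $s$; hence a single $s>0$ can be chosen so that $\dist(x,T_s(y))<\dist(x,y)$, with both in $(R_6,\infty)$, simultaneously for all $x\in\supp(\mu)$ and $y\in\supp(\nu)$. By $(\mathcal{H}6)$ this yields $K(\dist(x,T_s(y)))<K(\dist(x,y))$ pointwise, so the cross-interaction $\iint K(\dist(x,y))\,\d\mu(x)\,\d\nu(y)$ strictly decreases, while the self-energies are unchanged by isometry invariance \eqref{eqn:E-inv} (note $E[\nu_s]=E[\nu]$). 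Writing $E$ through its bilinear form then gives $E[\mu+\nu_s]<E[\mu+\nu]=E[\rho_\R]$, contradicting minimality; therefore one of the rays must be empty. I expect the geometric step to be the main obstacle: unlike in $\bbr^\dm$, the translation $+'u_k$ does not act coordinatewise, so both the monotone decay of the cross-distances (via the sign of $Q$) and the fact that the moved cluster stays inside $\overline{B_R(v)}$ must be read off from the explicit formula for $+'$; once these are in hand, the compactness argument producing a uniform admissible $s$ is routine.
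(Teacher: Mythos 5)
Your proof is correct and follows essentially the same route as the paper's: decompose $\rho_\R$ across the gap into its left and right pieces, slide one piece toward the other by $\mp'u_k(s)$, show via the explicit formula for $+'$ (and Lemma \ref{l3.7}) that every cross-distance strictly decreases while remaining above $R_6$, and invoke $(\mathcal{H}6)$ to get a strict energy drop contradicting minimality. The only differences are minor: the paper establishes the distance decrease by an exact closed-form identity valid for any fixed $\epsilon\in(0,R_6)$ rather than your derivative-at-zero-plus-compactness argument, and, to your credit, you additionally verify that the translated measure remains in $\calP_\R(\bbh^\dm)$ by moving the cluster on the far side of the vertex toward it, an admissibility point the paper's proof leaves implicit.
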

\begin{proof}
Suppose by contradiction that there exists an index $k\in\{1, 2, \cdots, \dm\}$ and $a_k \in \bbr$ with $\displaystyle{\pi_k^{-1}([a_k-R_6, a_k+R_6])\subseteq \bbh^\dm\backslash \supp(\rho_\R)}$, such that the support of $\rho_\R$ has nontrivial intersections with both the ``left" and ``right" parts, defined as
\[
\HL:=\pi_k^{-1}((-\infty, a_k-R_6])\quad\text{and}\quad \HR:=\pi_k^{-1}([a_k+R_6, \infty)),
\]
respectively. Also, fix $0<\epsilon<R_6$ and consider
\[
\bar{\rho}_\R:=\rho_\R\vert_{_{\HL}}+{T_\epsilon}_{\#} \bigl(\rho_\R\vert_{_{\HR}} \bigr),
\]
where $T_\epsilon$ is the isometry defined as $T_\epsilon(x)=x-'u_k(\epsilon)$ (recall \eqref{eqn:uk}). 

It follows immediately that
\begin{align*}
\supp(\rho_\R\vert_{_{\HL}})\subset \HL \quad \textrm{ and } \quad \supp\bigl ({T_\epsilon}_{\#} \bigl(\rho_\R\vert_{_{\HR}} \bigr) \bigr)\subset \HR-'u_k(\epsilon).
\end{align*}
From Corollary \ref{c3.5} we have
\[
\HR-'u_k(\epsilon)=\pi_k^{-1}([a_k+R_6, \infty))-'u_k(\epsilon)=\pi_k^{-1}([a_k+R_6-\epsilon, \infty)).
\]
Hence, we get
\[
\supp(\rho_\R\vert_{_{\HL}})\subset \pi_k^{-1}((-\infty, a_k-R_6]) \quad \textrm{ and } \quad \supp\bigl ({T_\epsilon}_{\#} \bigl(\rho_\R\vert_{_{\HR}} \bigr) \bigr) \subset \pi_k^{-1}([a_k+R_6-\epsilon, \infty)).
\]

From this fact and $0<\epsilon<R_6$, we infer that $\supp(\rho_\R\vert_{_{\HL}})$ and $\supp\bigl ({T_\epsilon}_{\#} \bigl(\rho_\R\vert_{_{\HR}} \bigr) \Bigr)$ are disjoint. This yields the following calculation:
\begin{align}
E(\bar{\rho}_\R)&=E(\rho_\R\vert_{_{\HL}})+E({T_\epsilon}_{\#} \bigl(\rho_\R\vert_{_{\HR}} \bigr) )
+\iint_{\bbh^\dm\times\bbh^\dm}K(\dist(x,y)) \,\d\rho_\R\vert_{_{\HL}} (x) \, \d {T_\epsilon}_{\#} \bigl(\rho_\R\vert_{_{\HR}} \bigr) (y) \nonumber \\
&=E(\rho_\R\vert_{_{\HL}})+E(\rho_\R\vert_{_{\HR}})
+\iint_{\bbh^\dm\times\bbh^\dm} K(\dist(x,y-'u_k(\epsilon))) \, \d\rho_\R\vert_{_{\HL}} (x) \, \d\rho_\R\vert_{_{\HR}}(y), \label{c-10}
\end{align}
where for the second equal sign we used the invariance under isometries of the energy (see \eqref{eqn:E-inv}) and the change of variable $z = y+'u_k(\epsilon) $ in the integral.

Now take $x\in \HL$ and $y\in \HR$, and recall \eqref{eqn:zcomp-dist}, by which we write
\[
\cosh(\dist(x,y-'u_k(\epsilon))) = (x-'(y-'u_k(\epsilon)))_0.
\]
Next, we will compare $(x-'(y-'u_k(\epsilon)))_0$ with $(x-'y)_0$.

Using \eqref{a-a-10}, \eqref{eqn:inverse} and \eqref{eqn:-prime}, we compute 
\begin{align*}
(x-'(y-'u_k(\epsilon)))_0&=x_0(y-'u_k(\epsilon))_0-\sum_{l=1}^\dm x_l(y-'u_k(\epsilon))_l  \\
&=x_0 (y_0 \cosh\epsilon- y_k \sinh\epsilon)-\sum_{l=1}^\dm x_l (-y_0\delta_{kl}\sinh \epsilon + y_l + y_l\delta_{kl}(\cosh \epsilon-1)) \\
&=x_0(y_0 \cosh\epsilon - y_k \sinh\epsilon) + x_k y_0 \sinh \epsilon -\sum_{l=1}^\dm x_l y_l+(1-\cosh \epsilon)x_ky_k.
\end{align*}
Furthermore, add and subtract $x_0 y_0$ to the right-hand-side above and use \eqref{eqn:xmy-z} to get
\begin{align}
&(x-'(y-'u_k(\epsilon)))_0-(x-'y)_0 \nonumber \\
=&(\cosh\epsilon-1)(x_0y_0-x_ky_k)+\sinh\epsilon(x_ky_0-x_0y_k)\\
=&2\sinh^2\left(\frac{\epsilon}{2}\right)x_0y_0\left(1-\frac{x_ky_k}{x_0y_0}\right)+2\sinh\left(\frac{\epsilon}{2}\right)\cosh\left(\frac{\epsilon}{2}\right) x_0y_0\left(\frac{x_k}{x_0}-\frac{y_k}{y_0}\right) \nonumber \\
=&2\sinh\left(\frac{\epsilon}{2}\right)x_0y_0\left(\sinh\left(\frac{\epsilon}{2}\right)\left(1-\tanh(\pi_k(x))\tanh(\pi_k(y))\right) +\cosh\left(\frac{\epsilon}{2}\right)(\tanh(\pi_k(x))-\tanh(\pi_k(y)))\right) \nonumber \\
=&2\sinh\left(\frac{\epsilon}{2}\right)x_0y_0(1-\tanh(\pi_k(x)) \tanh(\pi_k(y)))\left(\sinh\left(\frac{\epsilon}{2}\right)+\cosh\left(\frac{\epsilon}{2}\right)\tanh(\pi_k(x)-\pi_k(y))\right)\nonumber \\
=&\left(\frac{2\sinh\left(\frac{\epsilon}{2}\right)x_0y_0(1-\tanh\pi_k(x)\tanh\pi_k(y))}{\cosh(\pi_k(x)-\pi_k(y))}\right)\sinh\left(\pi_k(x)-\pi_k(y)+\frac{\epsilon}{2}\right). \label{eqn:diff-z}
\end{align}

Since
\[
\frac{2\sinh\left(\frac{\epsilon}{2}\right)x_0y_0(1-\tanh\pi_k(x)\tanh\pi_k(y))}{\cosh(\pi_k(x)-\pi_k(y))}
\]
is always positive for any $\epsilon>0$, we only have to study a sign of $\sinh\left(\pi_k(x)-\pi_k(y)+\frac{\epsilon}{2}\right)$. As $x\in H_L$, $y\in H_R$, and $\epsilon<R_6$, we have
\[
\pi_k(x)-\pi_k(y)+\frac{\epsilon}{2}<(a_k-R_6)-(a_k+R_6)+\frac{R_6}{2} = - \frac{3}{2} R_6 < 0,
\]
which by \eqref{eqn:diff-z} it implies that
\begin{equation}
\label{eqn:equiv}
(x-'(y-'u_k(\epsilon)))_0<(x-'y)_0\quad\Longleftrightarrow\quad \dist(x, y-'u_k(\epsilon))<\dist(x, y).
\end{equation}

Now apply Lemma \ref{l3.7} for $x$ and $y-'u_k(x)$, and use $\pi_k(x)<a_k-R_6$ and $\pi_k(y-'u_k(\epsilon))>a_k+R_6-\epsilon$, to get
\begin{equation}
\label{eqn:dist-ineq}
\dist(x, y-'u_k(\epsilon))>|(a_k+R_6-\epsilon)-(a_k-R_6)|=2R_6-\epsilon >R_6.
\end{equation}
By combining  \eqref{eqn:equiv} and \eqref{eqn:dist-ineq} we now get
\[
R_6<\dist(x, y-'u_k(\epsilon))<\dist(x, y), 
\]
and furthermore,
\[
K(\dist(x,y-'u_k(\epsilon)))<K(\dist(x,y)),
\]
since $K$ is strictly increasing on $[R_6, \infty)$ by Hypothesis $(\mathcal{H}6)$. 

Finally, we substitute this result into \eqref{c-10} to get
\begin{align*}
E(\bar{\rho}_\R) &=E(\rho_\R\vert_{_{\HL}})+E(\rho_\R\vert_{_{\HR}})
+\iint_{\bbh^\dm\times\bbh^\dm} K(\dist(x,y-'u_k(\epsilon))) \, \d\rho_\R\vert_{_{\HL}} (x) \, \d\rho_\R\vert_{_{\HR}}(y) \\
&< E(\rho_\R\vert_{_{\HL}})+E(\rho_\R\vert_{_{\HR}})
+\iint_{\bbh^\dm\times\bbh^\dm} K(\dist(x,y)) \, \d\rho_\R\vert_{_{\HL}} (x) \, \d\rho_\R\vert_{_{\HR}}(y) \\
&=E(\rho_\R).
\end{align*}
However,  this is a contradiction to the fact that $\rho_\R$ is a global minimizer on $\calP_\R(\bbh^\dm)$.
\end{proof}

Lemmas \ref{lemma:mass} and \ref{lemma:separation} lead to the following last result, that concludes Part 2 of the proof of Theorem \ref{Htheorem}. The statement and its proof are identical to those of \cite[Lemma 2.9]{CaCaPa2015}.

\begin{lem} \cite[Lemma 2.9]{CaCaPa2015} Assume $K$ satisfies $(\mathcal{H}1)-(\mathcal{H}6)$. Then there exists $\D>0$ (depending only on $K$ and $\dm$) such that for all $\R \geq 0$ and any global minimizer $\rho_\R$ of the energy on $\calP_\R(\bbh^\dm)$, the diameter of the support of $\rho_\R$ is bounded by $\D$.
\label{lemma:support}
\end{lem}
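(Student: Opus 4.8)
The plan is to combine the two fundamental lemmas -- the mass lemma (Lemma~\ref{lemma:mass}) and the separation lemma (Lemma~\ref{lemma:separation}) -- along the same lines as \cite[Lemma 2.9]{CaCaPa2015}, and then to handle one genuinely hyperbolic difficulty at the very end. Write $\bar{x}=(x_1,\dots,x_\dm)$ for the spatial part of $x\in\bbh^\dm$. The first step is to bound, for each fixed coordinate $k$, the extent of $\supp(\rho_\R)$ in the coordinate $\pi_k$ of \eqref{eqn:pik} by a constant depending only on $K$ and $\dm$. Let $[\alpha_k,\beta_k]$ be the range of $\pi_k$ over $\supp(\rho_\R)$, and partition it into consecutive slabs of $\pi_k$-width $2R_6$. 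By Lemma~\ref{lemma:separation}, no slab lying strictly between $\alpha_k$ and $\beta_k$ can be empty (otherwise all the mass would sit on one side of it, contradicting that both endpoints are attained), so each interior slab contains a point of the support. Selecting one support point per slab and retaining only every few of them, I obtain points whose pairwise $\pi_k$-differences exceed $2r$; by Lemma~\ref{l3.7} their pairwise geodesic distances then exceed $2r$ as well, so the balls $B_r(\cdot)$ are disjoint and the mass lemma caps their number at $1/m$. This caps the number of slabs, and hence bounds $\beta_k-\alpha_k$ by a constant $C=C(K,\dm)$.

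The second, and in my view only nontrivial, step is to convert bounded coordinate extents into a bounded geodesic diameter. In $\bbr^\dm$ this is immediate, since bounded coordinate ranges confine the support to a box whose diameter is controlled by the side lengths. On $\bbh^\dm$ this implication fails: Lemma~\ref{l3.7} gives only the one-sided bound $\dist(x,y)\ge|\pi_k(x)-\pi_k(y)|$, and two points can be arbitrarily far apart geodesically while all of their coordinate differences stay bounded. Concretely, along a geodesic ray leaving the vertex in a direction $\xi\in\bbs^{\dm-1}$ one has $\pi_k\to\tanh^{-1}(\xi_k)$, so the $\pi_k$-extent remains finite while the geodesic extent diverges. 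Thus the endgame of the Euclidean proof does not transfer verbatim, and this is the main obstacle.

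I would resolve it by exploiting rotational invariance. A rotation $O$ about the vertex (acting as the identity on the $x_0$-coordinate) is an isometry of $\bbh^\dm$ that preserves $B_\R(v)$ and the volume measure, so $O_{\#}\rho_\R$ is again a global minimizer on $\calP_\R(\bbh^\dm)$ and the first step applies to it. Carrying the bound through every rotated frame yields the directional statement that, for every unit vector $e\in\bbs^{\dm-1}$, the range over $\supp(\rho_\R)$ of $\pi_e(x):=\tanh^{-1}(\langle\bar{x},e\rangle/x_0)$ is at most $C$. This directional control does bound the diameter: given $x,y\in\supp(\rho_\R)$ with $\dist(v,y)\ge\dist(v,x)$, choosing $e$ along $\bar{y}$ gives $\pi_e(y)=\dist(v,y)$, while $\pi_e(x)\ge-\dist(v,x)$ always and stays bounded well below $\dist(v,y)$ unless $x$ and $y$ lie in nearly the same direction from $v$ (in which case $\dist(x,y)$ is itself small). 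A short computation using $\cosh\dist(x,y)=x_0y_0-\langle\bar{x},\bar{y}\rangle$ and the law of cosines \eqref{d-10} then shows that the directional range $\pi_e(y)-\pi_e(x)$ is comparable to $\dist(x,y)$. Hence $\dist(x,y)\le D(C,\dm)$ for all $x,y\in\supp(\rho_\R)$, which is the desired uniform bound $\D$. Since $C$, $r$, $m$ and $R_6$ depend only on $K$ and $\dm$, so does $\D$, completing Part~2 and therefore the proof of Theorem~\ref{Htheorem}.
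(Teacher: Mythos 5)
Your argument is correct, and its first half is in substance the same counting the paper performs: combine Lemma~\ref{lemma:separation} (no gaps wider than $2R_6$) with Lemma~\ref{lemma:mass} (disjoint balls $B_r$ each carrying mass $m$, hence at most $1/m$ of them) to bound the $\pi_k$-extent of $\supp(\rho_\R)$ by a constant $C=C(K,\dm)$. You also correctly isolate the genuinely hyperbolic obstacle --- that bounded coordinate extents do not by themselves bound the geodesic diameter, since Lemma~\ref{l3.7} is only one-sided --- and this is where the two proofs diverge. The paper resolves it with a single isometry: it takes the two farthest covering-ball centres, maps the midpoint of the geodesic joining them to the vertex $v$ and aligns that geodesic with the $x_1$-axis, so the images are $z$ and $-z$ with $z=(\cosh\alpha,\sinh\alpha,0,\dots,0)$; then $\dist(x_l,x_m)=2\alpha=\pi_1(z)-\pi_1(-z)$ and the coordinate bound applies with no further estimate (one only has to note that the bound is insensitive to where the ball $B_\R$ is centred, since the translation moves it). You instead use the full rotation group fixing $v$ --- which has the minor advantage of preserving $B_\R(v)$ exactly, so the rotated measure is literally a minimizer on $\calP_\R(\bbh^\dm)$ --- to obtain the directional bound for every $e\in\bbs^{\dm-1}$, and then invoke the law of cosines. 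The ``short computation'' you leave implicit does close: writing $s=\dist(v,x)\le t=\dist(v,y)$ and $e=\bar{y}/|\bar{y}|$, the bound $\pi_e(y)-\pi_e(x)\le C$ gives $\sinh s\cos\angle(xvy)\ge \cosh s\tanh(t-C)$ (the case $t\le C$ being trivial), and then \eqref{d-10} yields $\cosh\dist(x,y)\le \cosh s\,\cosh C/\cosh(t-C)\le e^{C}\cosh C$, so $\dist(x,y)\le\cosh^{-1}(e^{C}\cosh C)$. Both routes are valid; the paper's is a little slicker because the well-chosen isometry turns the distance into an exact coordinate difference, while yours trades that for an explicit but elementary estimate.
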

\begin{proof}
The proof of the lemma follows that of  \cite[Lemma 2.9]{CaCaPa2015}  by replacing the $k$-th coordinate of a point $x$ by $\pi_k(x)$ as defined in \eqref{eqn:pik}. We sketch it here. First, given that there is at least an amount of mass $m$ in any geodesic ball $B_r(x)$ with $x \in \supp(\rho_\R)$ (by Lemma \ref{lemma:mass}), the support of $\rho_\R$ can be covered by a finite number of geodesic balls $B_{2r}(x_i)$, $i=0,1,\dots,N$, with $N+1 \leq [1/m]$ ($[\cdot]$ denotes the ceiling of a number). Note that the points $x_i$ are chosen so that $x_i \notin B_{2r}(x_j)$, for any $i \neq j$, so the balls $B_r(x_i)$ are disjoint.

With $k$ fixed ($1\leq k \leq \dm$), order the points so that $\pi_k(x_0) < \dots < \pi_k(x_N)$. Then, 
\[
\pi_k(x_{i+1}) - \pi_k(x_i) \leq 4r + 2 R_6, \qquad i = 0,1,\dots N-1,
\]
as otherwise this would contradict Lemma \ref{lemma:separation} (gaps larger than $2R_6$). From here, get that
\begin{equation}
\label{eqn:pik-diff}
\pi_k(x_N) - \pi_k(x_0) \leq N (4r + 2 R_6) \leq ([1/m]-1)(4r + 2 R_6).
\end{equation}

Now set $x_l$ and $x_m$ such that 
\[
d(x_l,x_m) = \max_{0\leq i,j \leq N} d(x_i,x_j).
\]
Consider the geodesic curve between $x_l$ and $x_m$, and the midpoint between the two points on this geodesic. Take the isometry $T$ that maps the midpoint into the vertex $v$, and $x_l \mapsto z$, $x_m \mapsto -z$ (see \eqref{eqn:inverse}), where $z=(\cos \alpha, \sin \alpha, 0, \dots,0)$. Then,
\[
d(x_l,x_m) = d(z,-z)= 2\alpha,
\]
where the second equal sign comes from the fact that (see \eqref{eqn:geod-d})
\[
\dist(-z,z) = 2 \dist(v,z) = 2\cosh^{-1} (\cosh \alpha) = 2 \alpha.
\]
On the other hand, by \eqref{eqn:pik}, we get
\[
2 \alpha = \pi_1(z) - \pi_1(-z).
\]
Finally, by \eqref{eqn:pik-diff} (used for the image under the isometry $T$ of $\supp(\rho_R)$), one has
\[
\pi_1(z) - \pi_1(-z) \leq ([1/m]-1)(4r + 2 R_6).
\]

Combining these considerations, we find
\[
\text{diam}(\supp(\rho_R)) \leq  ([1/m]-1)(4r + 2 R_6) + 4r =:\D.
\] 

\end{proof}

\section{Numerical simulations}\label{sect:numerics}

In this section, we provide numerical simulations of the model \eqref{eqn:model} for some specific interaction potentials in the form \eqref{eqn:K-Na}. For this purpose, we write model \eqref{eqn:model} in characteristic form; this form of the equation is widely used for both analysis and numerical simulations \cite{BertozziCarilloLaurent, FeRa10, FeHu13, FeZh2019}.

\paragraph{Model \eqref{eqn:model} in characteristic form.} Consider the flow generated by the velocity field $\V[\rho]$ given by \eqref{eqn:v-field}. The characteristic paths $x(t)$ then satisfy
\begin{equation}
\label{eqn:flow}
\frac{ \d x}{\d t} = \V[\rho](x(t),t).
\end{equation}
To reach the characteristic form of model \eqref{eqn:model}, expand the divergence 
\[
\nabla \cdot (\rho \V[\rho]) = \nabla_{\V[\rho]} \rho + \rho \nabla \cdot \V[\rho],
\]
where  $\nabla_{\V[\rho]} \rho$ denotes the covariant derivative of $\rho$ along $\V[\rho]$. Then write \eqref{eqn:model} as
\begin{align}
\partial_t \rho +  \nabla_{\V[\rho]} \rho &= - \rho \nabla \cdot \V[\rho] \nonumber \\
&= \rho(t, x)\int_{\bbh^\dm} \Delta_x K(x, y) \rho(t, y)\d y, \label{eqn:ch-form}
\end{align}
where for the second equal sign we used \eqref{eqn:v-field}.  Note that the left-hand-side above represents the material derivative (along characteristic paths) of $\rho$.

For numerical simulations we consider only radially symmetric solutions. In such case, the characteristic paths $x(t)$ are geodesic curves passing through the vertex $v$. Denote the radial coordinate
\[
\theta_{x}:= \dist(x,v).
\]
By writing the characteristic equations \eqref{eqn:flow} in radial coordinates, we then find
\begin{align}\label{Nu2}
\frac{\d}{\d t}\theta_{x(t)}=-\nx\cdot \nabla_x \int_{\bbh^\dm} K(x, y)\rho(t, y)\d y,
\end{align}
where $\nx := \frac{x}{\theta_x}$.

The interaction potentials in the numerical simulations are in the form \eqref{eqn:K-Na}. Using \eqref{eqn:K-Na}, the radial symmetry of $\rho$, the divergence theorem and \eqref{eqn:DeltaG}, we compute starting from \eqref{Nu2}:
\begin{equation*}
\begin{aligned}
&\frac{\d}{\d t}\theta_{x(t)}= -\nx\cdot \nabla_x \int_{\bbh^\dm} ( G(x, y) + A(d(x,y)) \rho(t, y)\d y\\
&\quad =-\frac{1}{|\partial B_v(\theta_x)|}\int_{\partial B_v(\theta_x)}\nx\cdot \nabla_x \int_{\bbh^\dm}G(x, y)\rho(t, y)\d y \d\sigma_x-\nx\cdot \nabla_x \int_{\bbh^\dm} A(d(x, y))\rho(t, y)\d y \\
&\quad =-\frac{1}{|\partial B_v(\theta_x)|}\int_{B_v(\theta_x)}\Delta_x \int_{\bbh^\dm} G(x, y)\rho(t, y)\d y\d \sigma_x-\nx\cdot \nabla_x \int_{\bbh^\dm} A(d(x, y))\rho(t, y)\d y\\
&\quad =-\frac{1}{\dm\alpha(\dm)\sinh^{\dm-1}\theta_x}\int_{B_v(\theta_x)}\left(-\rho(t, x)\right)\d x-\nx\cdot \nabla_x \int_{\bbh^\dm} A(d(x, y))\rho(t, y)dy.
\end{aligned}
\end{equation*}
Furthermore, using the notation $\tilde{\rho}(t,\theta_x) = \rho(t,x)$ and the hyperbolic law of cosines, we write
\begin{align}
&\frac{\d}{\d t}\theta_{x(t)}
=\frac{1}{\sinh^{\dm-1}\theta_x}\int_0^{\theta_x}\tilde{\rho}(t, \lambda)\sinh^{\dm-1}\lambda \, \d\lambda \nonumber \\
&\quad  -\int_{\bbh^\dm} \partial_{\theta_x}A\left(\cosh^{-1}\left(\cosh\theta_x \cosh\theta_y-\sinh\theta_x \sinh\theta_y\cos\angle xvy\right)\right)\tilde{\rho}(t, \theta_y)\d y \nonumber \\
& = \frac{1}{\sinh^{\dm-1}\theta_x}\int_0^{\theta_x}\tilde{\rho}(t, \lambda)\sinh^{\dm-1}\lambda \, \d\lambda -  (\dm-1)\alpha(\dm-1) \nonumber \\
& \quad  \times \int_0^\pi\int_0^\infty \partial_{\theta_x}A\left(\cosh^{-1}\left(\cosh\theta_x \cosh\theta_y-\sinh\theta_x \sinh\theta_y\cos\beta\right)\right)\tilde{\rho}(t, \theta_y)\sinh^{\dm-1}\theta_y \sin^{\dm-2}\beta \, \d\theta_y\d\beta. \label{eqn:dthetax-dt}
\end{align}

For the evolution of the density along characteristic paths, we calculate from \eqref{eqn:ch-form}:
\begin{align}
&\frac{\d}{\d t}\rho(t, x(t)) =\rho(t, x)\int_{\bbh^\dm} \Delta_x K(x, y)\rho(t, y)\d y\nonumber \\
&=\rho(t, x)\left(-\rho(t, x)+\int_{\bbh^\dm} \Delta_x A(d(x, y))\rho(t, y)\d y\right) \nonumber \\
&=-\rho(t, x)^2 +\rho(t, x)\int_{\bbh^\dm}\Delta A\left(\cosh^{-1}\left(\cosh\theta_x\cosh\theta_y-\sinh\theta_x\sinh\theta_y\cos\angle xvy\right)\right) \tilde{\rho}(t, \theta_y) \d y \nonumber \\
&=-\rho(t, x)^2+(\dm-1)\alpha(\dm-1)\rho(t, x) \nonumber \\
&\quad\times\int_0^\pi\int_0^\infty \Delta A\left(\cosh^{-1}\left(\cosh\theta_x\cosh\theta_y-\sinh\theta_x\sinh\theta_y\cos\beta\right)\right) \tilde{\rho}(t, \theta_y)\sinh^{\dm-1}\theta_y \sin^{\dm-2}\beta \, \d\theta_y\d\beta. \label{eqn:drho-dt}
\end{align}

Due to the radial symmetry assumption, the evolution equations \eqref{eqn:dthetax-dt} and \eqref{eqn:drho-dt} are one-dimensional. For simplicity, denote $\theta(t) = \theta_{x(t)}$ and use the notation $\tilde{\rho}$, to write \eqref{eqn:dthetax-dt} and \eqref{eqn:drho-dt} as a system of two integro-differential equations for $\theta(t)$ and $\tilde{\rho}(t,\theta(t))$:

\begin{align}\label{charsys-dm}
\begin{cases}
&\displaystyle \frac{\d}{\d t}\theta(t)=\frac{1}{\sinh^{\dm-1}\theta}\int_0^{\theta}\tilde{\rho}(t, \lambda)\sinh^{\dm-1}\lambda \, \d\lambda-(\dm-1)\alpha(\dm-1) \\[12pt]
&\quad \displaystyle \times\int_0^\pi \int_0^\infty \partial_{\theta}A\left(\cosh^{-1}\left(\cosh\theta \cosh\theta_y-\sinh\theta \sinh\theta_y\cos\beta\right)\right)\tilde{\rho}(t, \theta_y)\sinh^{\dm-1}\theta_y \sin^{\dm-2}\beta\,  \d\theta_y\d\beta \\[15pt]
&\displaystyle \frac{\d}{\d t}\tilde{\rho}(t, \theta(t))
=-\tilde{\rho}(t, \theta)^2+(\dm-1)\alpha(\dm-1)\tilde{\rho}(t, \theta)\\[12pt] 
&\quad \displaystyle \times\int_0^\pi\int_0^\infty \Delta A\left(\cosh^{-1}\left(\cosh\theta \cosh\theta_y-\sinh\theta\sinh\theta_y\cos\beta\right)\right) \tilde{\rho}(t, \theta_y)\sinh^{\dm-1}\theta_y \sin^{\dm-2}\beta \, \d\theta_y\d\beta,
\end{cases}
\end{align}
where $\Delta A(\theta)=A''(\theta)+(\dm-1)A'(\theta)\coth\theta $.

\paragraph{Numerical results.} All numerical results we present are for the hyperbolic plane ($\dm=2$), for which \eqref{charsys-dm} simplifies to
\begin{align}\label{charsys-2d}
\begin{cases}
&\displaystyle \frac{\d}{\d t}\theta(t)=\frac{1}{\sinh \theta}\int_0^{\theta}\tilde{\rho}(t, \lambda)\sinh \lambda \, \d\lambda \\[12pt]
&\qquad \displaystyle -2 \int_0^\pi \int_0^\infty \partial_{\theta}A\left(\cosh^{-1}\left(\cosh\theta \cosh\theta_y-\sinh\theta \sinh\theta_y\cos\beta\right)\right)\tilde{\rho}(t, \theta_y)\sinh \theta_y \,  \d\theta_y\d\beta \\[15pt]
&\displaystyle \frac{\d}{\d t}\tilde{\rho}(t, \theta)
=-\tilde{\rho}(t, \theta)^2 \\[12pt] 
&\qquad \displaystyle + 2 \tilde{\rho}(t, \theta)\int_0^\pi\int_0^\infty \Delta A\left(\cosh^{-1}\left(\cosh\theta \cosh\theta_y-\sinh\theta\sinh\theta_y\cos\beta\right)\right) \tilde{\rho}(t, \theta_y)\sinh \theta_y \, \d\theta_y\d\beta.
\end{cases}
\end{align}

For all simulations, the (radially symmetric) initial density $\tilde{\rho}$ is taken to be
\begin{align}\label{initialdataradial}
\tilde{\rho}_0(\theta)=e^{-5\theta}(0.01+\theta-\theta^2)/C,
\end{align}
where $C$ is a normalization constant which satisfies $\int_0^\infty \tilde{\rho}_0(\theta)\sinh\theta \d\theta=C$.  We use the classical fourth order Runge-Kutta method with $\Delta t=0.02$ to evolve \eqref{charsys-2d} in time. The initial particle positions are set at $\theta_i(0) = (i-1) \Delta x$ with $\Delta x=0.02$, $i=1,\dots, N$, with $N=51$.

We consider the interaction potentials (in the form \eqref{eqn:K-Na}) from Section \ref{subsect:explicit}, for which we have explicit expressions of the equilibria. First, take attraction given by \eqref{eqn:Ac} and \eqref{eqn:Ah}, for which we computed the explicit solutions \eqref{d-15} and \eqref{d-8}. In 
$\dm = 2$ dimensions, these solutions (written in the radial notation, using the tilde symbol) read
\begin{align}\label{exact:rhoc}
\begin{aligned}
&\trhoc(\theta)=\begin{cases}
\displaystyle1 \qquad\text{if }0\leq \theta<\cosh^{-1}\Bigl(1+\frac{1}{2\pi}\Bigr),\\[5pt]
0\qquad\text{otherwise},
\end{cases}
\end{aligned}
\end{align}
and
\begin{align}\label{exact:rhoh}
\begin{aligned}
&\trhoh(\theta)=\begin{cases}
\displaystyle\frac{\cosh\theta}{\pi\left(\left(1+\frac{3}{2\pi}\right)^{2/3}-1\right)}&\displaystyle\qquad\text{if }0\leq \theta<\cosh^{-1}\biggl( \Bigl(1+ \frac{3}{2\pi}\Bigr)^{1/3}\biggr),\\[10pt]
0&\qquad\text{otherwise}.
\end{cases}
\end{aligned}
\end{align}
Note the monotonicity of these equilibria, which is consistent with Theorem \ref{thm:mov-planes}.

The time evolution of the model with Newtonian repulsion and attraction \eqref{eqn:Ac} and \eqref{eqn:Ah}, is illustrated in Figure \ref{fig:AcAh}. In both cases, the solution approaches the equilibria in \eqref{exact:rhoc} and \eqref{exact:rhoh}, respectively. The simulation confirms the theoretical findings in Section \ref{sec:3.3} (see also Theorem \ref{thm:convexity}): the energy functional is strictly convex and solutions approach asymptotically the global energy minimizer. On the other hand, numerical simulations with a variety of other initial conditions suggest that the equilibrium \eqref{exact:rhoc} is also a global attractor; this was first conjectured in \cite{FeZh2019}. 
\begin{figure}
\begin{center}
\begin{tabular}{cc}
\includegraphics[scale=0.45]{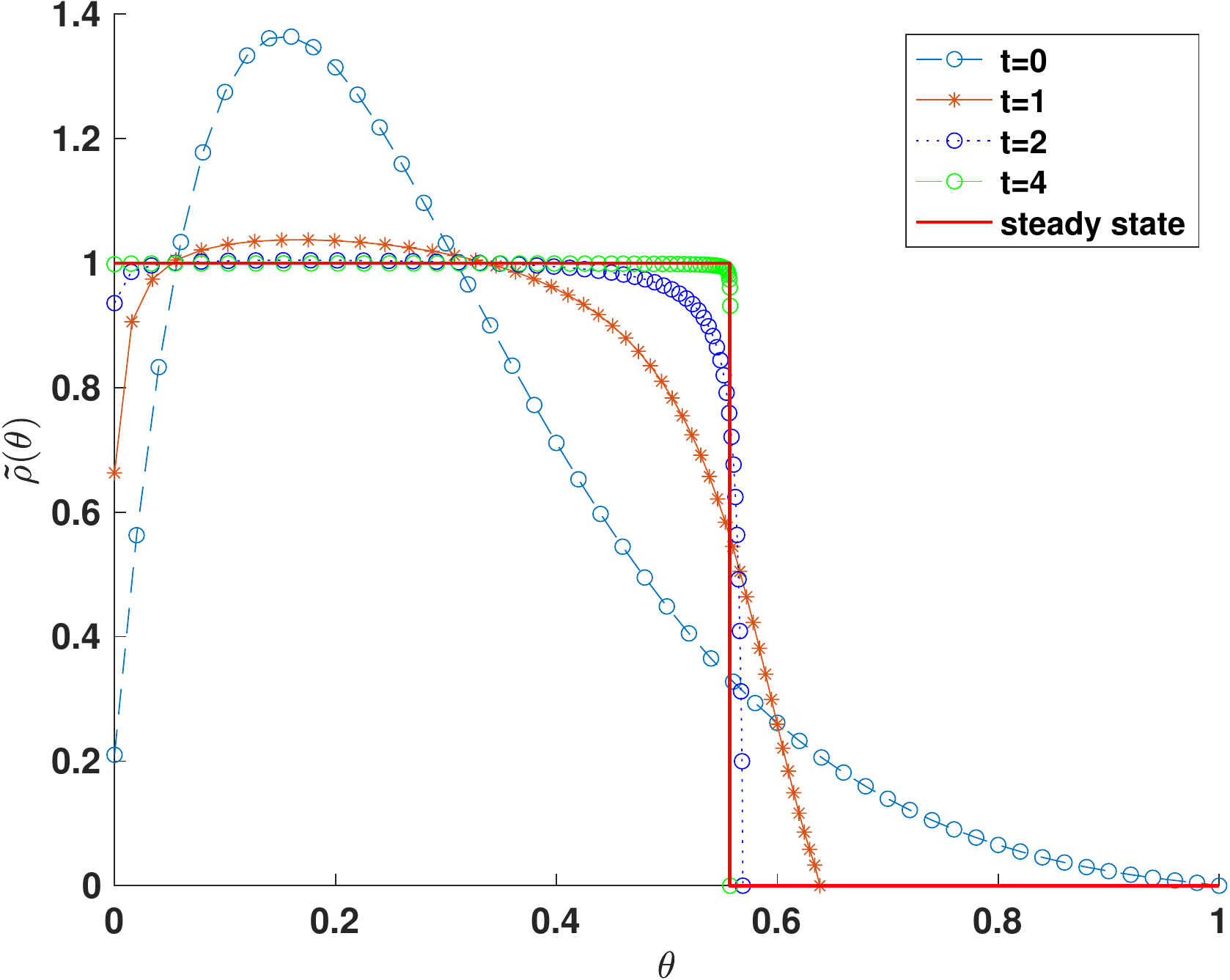}&
\includegraphics[scale=0.45]{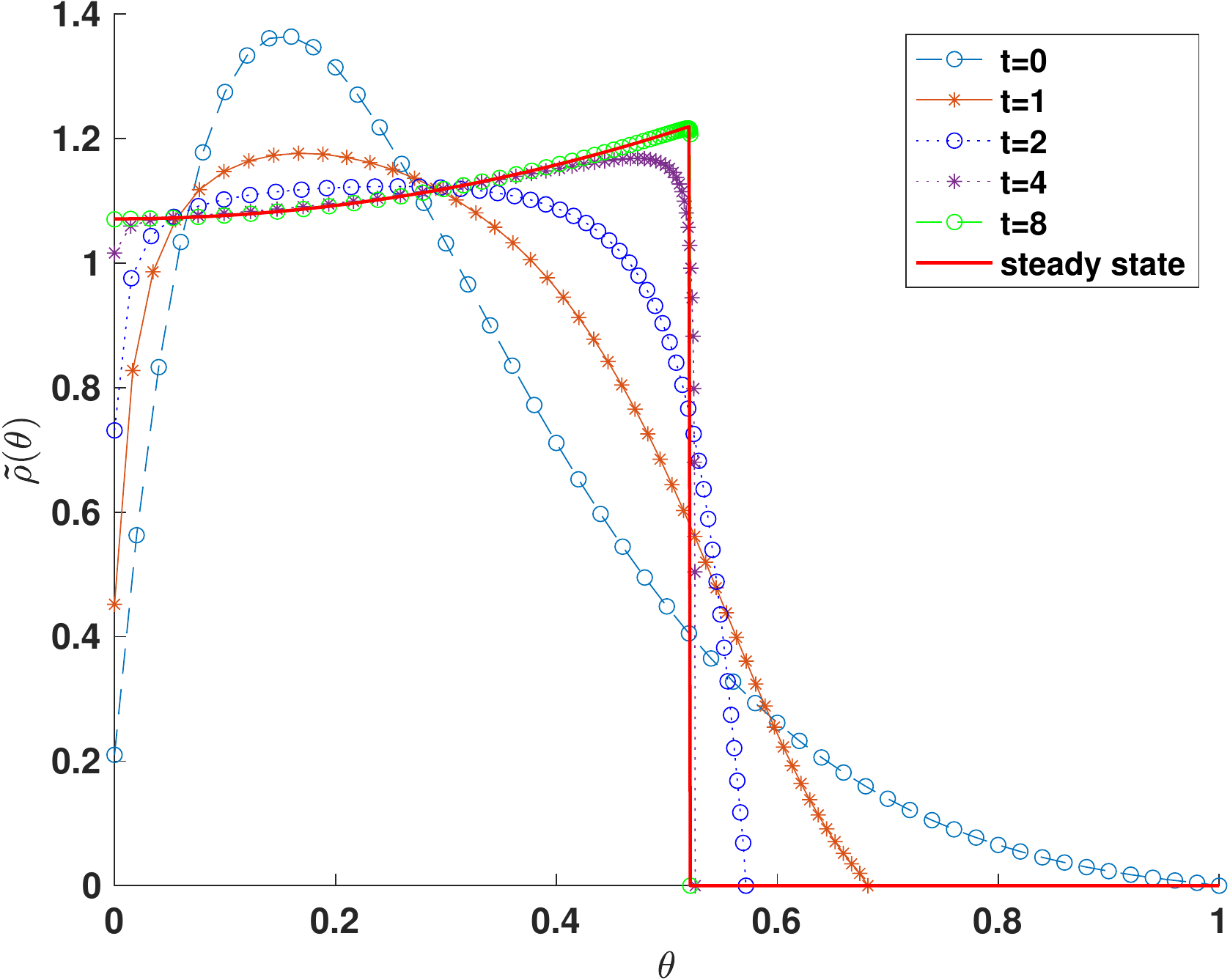} \\
 (a) & (b) 
\end{tabular}
\end{center}
\caption{Time evolution of a radially symmetric solution to the aggregation model \eqref{eqn:model} on $\bbh^2$, for $K$ in the form \eqref{eqn:K-Na}, with attraction given by (a) $A = \Ac$  and (b) $A=\Ah$ -- see \eqref{eqn:Ac} and \eqref{eqn:Ah}. The initial density (blue dashed-line marked with circles) is given in \eqref{initialdataradial}. The solutions approach asymptotically the equilibria \eqref{exact:rhoc} and \eqref{exact:rhoh}, respectively, indicated by red solid lines.}
\label{fig:AcAh}
\end{figure}

For numerical simulations, we also considered linear combinations of the potentials $\Ac$ and $\Ah$, i.e., potentials in the form 
\begin{align}\label{lcpot}
\Am:=b_0 \Ac +b_1 \Ah,
\end{align}
with $b_0,b_1$ constants. Note that $\Delta \Am(\theta)=b_0+b_1\cosh\theta$. 

The following proposition provides an explicit expression for the equilibria corresponding to mixed attraction $\Am$.
\begin{prop}\label{P5.1}
Consider the aggregation model \eqref{eqn:model} on $\bbh^2$, with $K(x, y)=G(x, y)+ \Am(d(x, y))$, and $b_0,b_1\geq0$ such that $(b_0, b_1)\neq(0, 0)$. Then, there exists a unique radially symmetric equilibrium in the form
\[
\trhom(\theta)=\begin{cases}
\displaystyle  b_0+\left(\frac{1-2\pi b_0\alpha_{b_0b_1}}{\pi\alpha_{b_0b_1}(\alpha_{b_0b_1}+2)}\right)\cosh\theta \qquad&\text{ if }0\leq \theta\leq \cosh^{-1}(\alpha_{b_0b_1}+1),\\[10pt]
0 \qquad&\text{ if }\theta>\cosh^{-1}(\alpha_{b_0b_1}+1),
\end{cases}
\]
where $\alpha_{b_0b_1}$ is a solution of the polynomial
\[
1-2\pi (b_0+b_1) x-2\pi b_1x^2-\frac{2\pi b_1}{3}x^3+\frac{\pi^2 b_0b_1}{3}x^4=0.
\]
\end{prop}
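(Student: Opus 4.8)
The plan is to search for a radially symmetric equilibrium supported on a geodesic ball $D_R(v)$ and to pin down its two free parameters from the equilibrium equation and normalization. Writing $\theta_x=\dist(v,x)$ and using $\Delta_x\Am(\dist(x,y))=b_0+b_1\cosh(\dist(x,y))$ together with $\int \rhom=1$, the equilibrium integral equation $\eqref{eqn:int-eq-gen0}$ becomes, for $x\in D_R(v)$,
\[
\trhom(\theta_x)=b_0+b_1\int_{D_R(v)}\cosh(\dist(x,y))\rhom(y)\,\d y.
\]
The integral here is exactly the one evaluated in the hyperbolic-cosine computation leading to $\eqref{eqn:int-cosh}$: by the law of cosines $\eqref{d-10}$ and the vanishing of $\int_{\bbs^{1}}\cos\angle(xvy)\,\d\sigma_y$, the cross term drops and only a constant multiple of $\cosh\theta_x$ survives. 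Hence any such equilibrium is forced to have the form $\trhom(\theta)=b_0+c\cosh\theta$ on $[0,R]$, with $c=2\pi b_1\int_0^R\cosh\theta\,\trhom(\theta)\sinh\theta\,\d\theta$ a constant still to be determined. (That the support is a geodesic ball, and that $\trhom$ is increasing, is consistent with Theorem \ref{thm:mov-planes} since $\Delta\Am=b_0+b_1\cosh\theta$ is positive and increasing.)

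Next I would fix the two remaining unknowns $c$ and $R$, encoding the radius through $\alpha:=\cosh R-1>0$. They are determined by two scalar conditions: the normalization $2\pi\int_0^R\trhom(\theta)\sinh\theta\,\d\theta=1$, and the self-consistency relation defining $c$ above. All integrals are elementary in $\alpha$ via $\int_0^R\sinh\theta\,\d\theta=\alpha$, $\int_0^R\cosh\theta\sinh\theta\,\d\theta=\tfrac12\alpha(\alpha+2)$, and $\int_0^R\cosh^2\theta\sinh\theta\,\d\theta=\tfrac13\alpha(\alpha^2+3\alpha+3)$. The normalization solves immediately for $c$, yielding $c=\frac{1-2\pi b_0\alpha}{\pi\alpha(\alpha+2)}$, which is precisely the coefficient in the stated formula. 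Substituting this $c$ into the self-consistency relation and clearing the factor $\pi\alpha(\alpha+2)$ leaves a single equation in $\alpha$; the quartic terms recombine through the identity $\tfrac43(\alpha^2+3\alpha+3)-(\alpha^2+4\alpha+4)=\tfrac13\alpha^2$, and collecting powers of $\alpha$ produces exactly $1-2\pi(b_0+b_1)\alpha-2\pi b_1\alpha^2-\tfrac{2\pi b_1}{3}\alpha^3+\tfrac{\pi^2b_0b_1}{3}\alpha^4=0$.

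The main obstacle is the last step: showing the quartic has a \emph{unique} positive root $\alpha_{b_0b_1}$ that yields an admissible (nonnegative) density, i.e.\ disentangling the admissible root from any spurious ones. The key observation is that for a nontrivial admissible density the self-consistency relation forces $c=2\pi b_1\int_0^R\cosh\theta\,\trhom\sinh\theta\,\d\theta>0$ when $b_1>0$, so by the formula for $c$ the admissible root must satisfy $0<\alpha<\frac{1}{2\pi b_0}$ (the upper endpoint read as $+\infty$ when $b_0=0$). Denoting the quartic by $q$, one checks $q(0)=1>0$, $q(+\infty)=+\infty$, and $q\bigl(\tfrac{1}{2\pi b_0}\bigr)<0$ (there $c=0$ while the right side of the self-consistency relation is strictly positive). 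Since $q$ has at most two positive roots by Descartes' rule of signs, it has exactly one root in $\bigl(0,\tfrac{1}{2\pi b_0}\bigr)$, the admissible one with $c>0$ and $\trhom\ge b_0\ge0$, and one root in $\bigl(\tfrac{1}{2\pi b_0},\infty\bigr)$ with $c<0$, which makes $\int\cosh\theta\,\trhom\sinh\theta<0$ and hence forces $\trhom$ to change sign, ruling it out. Finally I would dispatch the degenerate cases separately: $b_1=0$ reduces the equation to $1-2\pi b_0\alpha=0$ with $c=0$, recovering the constant-density equilibrium $\rhoc$, while $b_0=0$ reduces it to a cubic with a single positive root and $c>0$, recovering $\rhoh$.
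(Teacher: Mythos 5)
Your argument is correct and arrives at the same quartic as the paper, but the way you derive it and, more importantly, the way you count and select its roots differ from the paper's Appendix B. The paper treats the two equations for the coefficients $a_0,a_1$ of $\trhom=a_0+a_1\cosh\theta$ as a homogeneous linear system, extracts the compatibility condition on the ratio $a_0:a_1$ (see \eqref{aratio}) to obtain \eqref{eqcoshR}, and only uses the unit-mass condition at the very end to fix the overall constant; you instead impose unit mass up front (which forces $a_0=b_0$ and gives $c$ explicitly in terms of $\alpha=\cosh R-1$) and get the quartic from the single remaining self-consistency relation — algebraically equivalent, and your route makes the coefficient $\frac{1-2\pi b_0\alpha}{\pi\alpha(\alpha+2)}$ appear immediately. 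The real divergence is in the root analysis: the paper studies $\fb'$ (Vieta's formulas on the cubic to show a unique positive critical point), evaluates $\fb$ at $\frac{1}{2\pi(b_0+b_1)}$ to force two positive roots, and excludes the larger root $\beta_{b_0b_1}$ by a separate computation at $\lambda=\frac{1+\sqrt{1+4\pi b_0}}{2\pi b_0}$ showing $\trhom(R)<0$ there; you get "at most two positive roots" from Descartes' rule, localize them with the single evaluation $q(\tfrac{1}{2\pi b_0})<0$, and exclude the larger root by the sign of $c$ (the self-consistency $c=2\pi b_1\int_0^R\cosh\theta\,\trhom\sinh\theta\,\d\theta$ forces $c>0$ for a nonnegative nontrivial density). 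Your version is shorter and avoids the $\lambda$ computation entirely; the one place you should tighten is the parenthetical justification that $q(\tfrac{1}{2\pi b_0})<0$ — it is true (direct substitution gives $-\frac{b_1}{b_0}-\frac{b_1}{2\pi b_0^2}-\frac{b_1}{16\pi^2 b_0^3}$, or note that at $c=0$ the self-consistency defect equals $-\pi b_0 b_1\alpha(\alpha+2)$ times a positive factor), but as written it is only gestured at. Like the paper, you take for granted that the equilibrium is supported on a single geodesic ball centred at $v$, which is the sense in which the proposition's uniqueness claim is to be read.
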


\begin{proof}
See Appendix \ref{sec:app.b}.
\end{proof}

\begin{remark}
While we cannot prove Proposition \ref{P5.1} for general $(b_0, b_1)\in\bbr^2\backslash \{(0, 0)\}$, similar arguments can be used for some specific cases. For example, the result in Proposition \ref{P5.1} holds for $(b_0, b_1)=(2, -1)$, although $b_1<0$. We will present below numerical simulations for $(b_0, b_1)=(2, -1)$, as in this case the equilibrium solution $\trhom$ is qualitatively different from $\trhoc$ and $\trhoh$. Indeed, $\trhom$ is monotonically decreasing, following the monotonicity of $\Delta \Am$ (see Theorem \ref{thm:mov-planes}).
\end{remark}

For $(b_0, b_1)= (2, -1)$, the equilibrium in Proposition  \ref{P5.1} can be calculated as
\begin{align}\label{exactsolutions}
\begin{aligned}
&\trhom(\theta)\simeq\begin{cases}
2-1.0956\cosh\theta &\qquad\text{ if }0\leq \theta<0.6227,\\
0&\qquad\text{ otherwise}.
\end{cases}
\end{aligned}
\end{align}
Figure \ref{fig:Am}(a) shows the time evolution of the initial density \eqref{initialdataradial}, for a potential with mixed attraction \eqref{lcpot}, and $(b_0, b_1)= (2, -1)$. The solution approach asymptotically the steady state $\trhom$. Simulations with a variety of other initial densities (including 
non-symmetric ones) suggest that $\trhom$ is in fact a global attractor. We clarify here that for non-symmetric densities we use a particle method and evolve in time the discrete analogue of \eqref{eqn:model} -- see \cite{FeZh2019} for the particle method formulation of the model \eqref{eqn:model} on $\bbh^\dm$. Indeed, starting from random distributions, particles approach a symmetric steady state supported on a geodesic ball (we do not present these simulations here).

Finally, we provide a simulation with an interaction potential in the form \eqref{eqn:K-Na} for which $\Delta A$ is not monotone (so Theorem \ref{thm:mov-planes} does not hold). Specifically, we take
\begin{equation}
\label{eqn:A-non-monotone}
A(\theta) =\frac{1}{6}\sinh^2\theta-\frac{3}{2}\cosh\theta+6\ln\left(\cosh\frac{\theta}{2}\right),
\end{equation}
for which $\Delta A(\theta)=\cosh^2\theta-3\cosh\theta+8/3$. Figure \ref{fig:Am}(b) shows this simulation, starting from the same initial density \eqref{initialdataradial}. Note that in this case, the steady state is not monotone, as in the previous examples. Also, we do not have any explicit form for  the steady state, or any result about its uniqueness; this is simply a numerically obtained equilibrium. Nevertheless, simulations with various other symmetric and non-symmetric initial data suggest a large basin of attraction for this equilibrium, possibly being a global attractor as well.
\begin{figure}
 \begin{center}
 \begin{tabular}{cc}
 \includegraphics[scale=0.45]{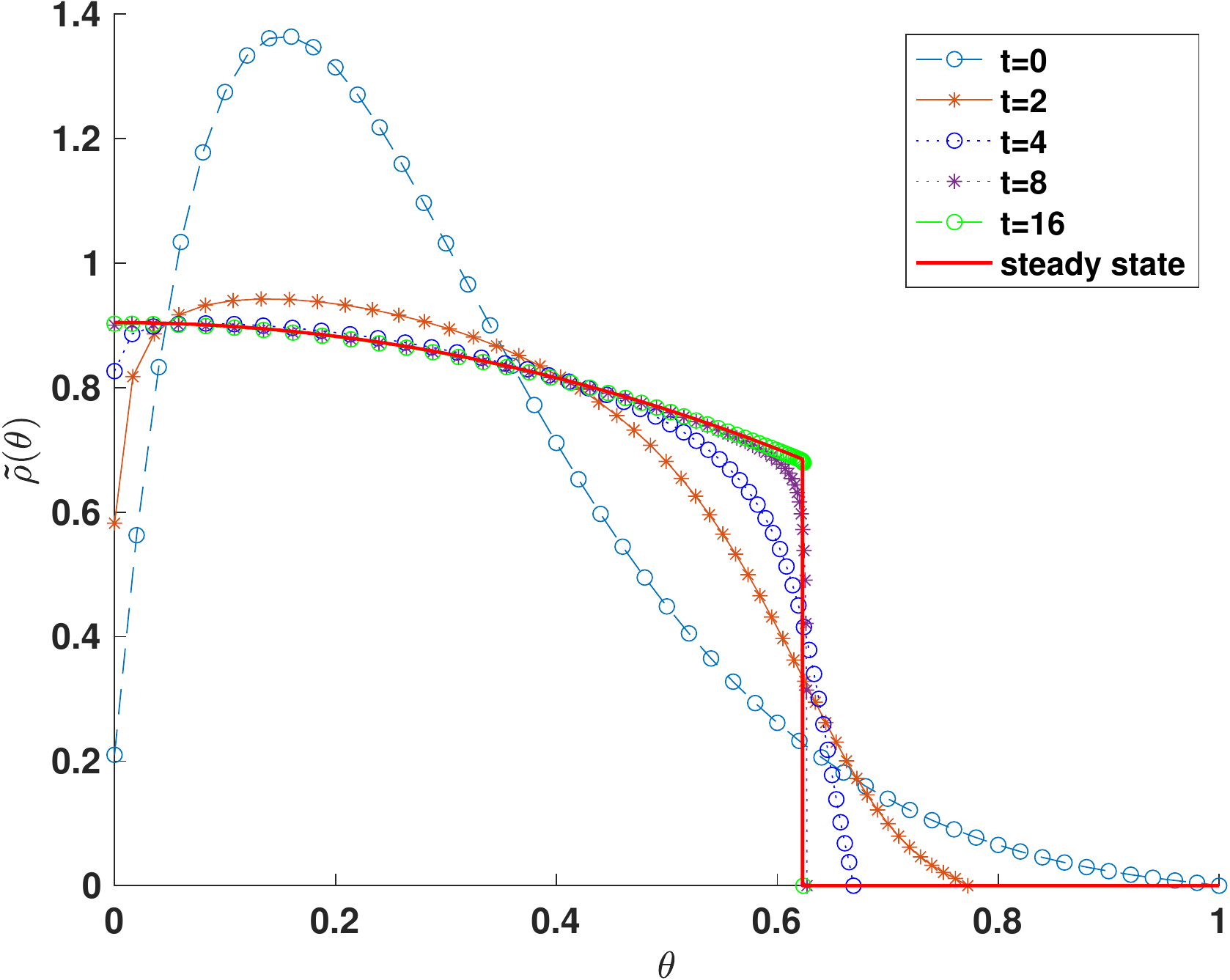}  &
 \includegraphics[scale=0.45]{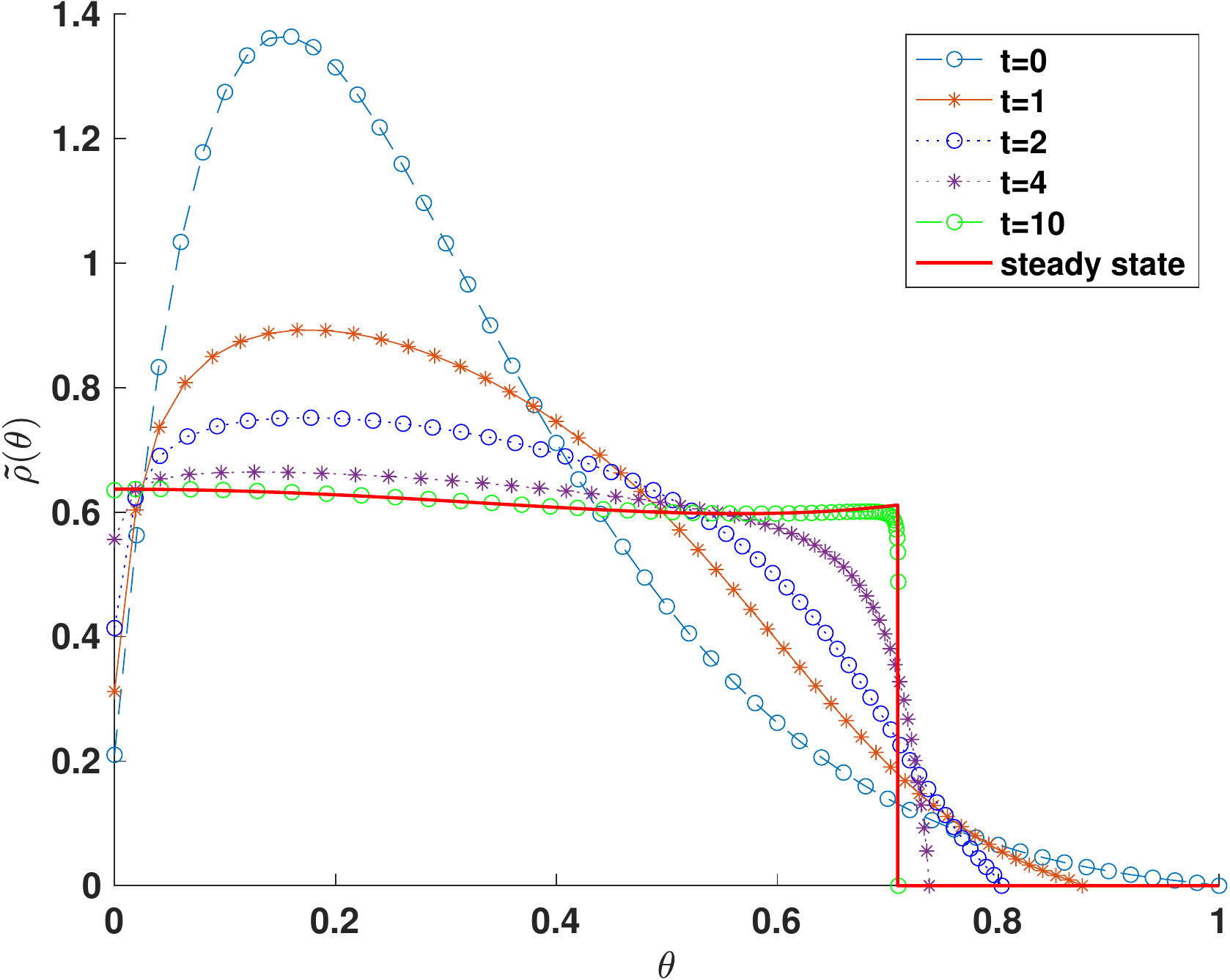}  
\\
 (a)  & (b) 
 \end{tabular}
 \begin{center}
 \end{center}
\caption{Time evolution of a radially symmetric solution to the aggregation model \eqref{eqn:model} on $\bbh^2$, for $K$ in the form \eqref{eqn:K-Na}, with attraction given by (a) $A= \Am$ with $(b_0, b_1)=(2, -1)$ -- see \eqref{lcpot}, and (b) $A$ given by \eqref{eqn:A-non-monotone}. The initial data is given by \eqref{initialdataradial}.  For simulation (a), the solution approaches asymptotically the equilibrium \eqref{exactsolutions}; note that the steady state is monotonically decreasing about the vertex, consistent with the result in Theorem \ref{thm:mov-planes}. The steady state for the simulation (b) is not monotone; this case is not covered by Theorem \ref{thm:mov-planes}.}
\label{fig:Am}
\end{center}
\end{figure}


\bibliographystyle{abbrv}
\def\url#1{}
\bibliography{lit.bib}

\appendix
\section{Proofs of several lemmas from Section \ref{sec:2.2}}\label{sec:app.a}

\begin{proof}[Proof of Lemma \ref{Leme}]
From the definition of the matrix product, we have
\[
y_{\alpha}=\sum_{\beta=0}^\dm B(w)_{\alpha\beta}x_\beta, \qquad\forall~\alpha=0, 1, \cdots, \dm.
\]
We can simplify it as follows:
\[
y_0=\sum_{\beta=0}^\dm B(w)_{0\beta}x_\beta=B(w)_{00}x_0+\sum_{i=1}^\dm B(w)_{0i}x_i=\gamma x_0-\gamma \sum_{i=1}^\dm w_i x_i,
\]
and for $j =1,\dots,\dm$,
\begin{align*}
y_j&=\sum_{\beta=0}^\dm B(w)_{j\beta}x_\beta=B(w)_{j0}x_0+\sum_{i=1}^\dm B(w)_{ji}x_i \\
& =-\gamma w_jx_0+\sum_{i=1}^\dm \left(\delta_{ij}+(\gamma-1)\frac{w_iw_j}{|w|^2}\right)x_i\\
&=-\gamma w_jx_0+x_j+(\gamma-1)\frac{w_j}{|w|^2}\sum_{i=1}^\dm x_iw_i.
\end{align*}
This yields
\begin{align*}
&y_0^2-y_1^2-\cdots-y_\dm^2\\
&=\gamma^2\left(x_0-\sum_{i=1}^\dm w_ix_i\right)^2-\sum_{j=1}^\dm \left(-\gamma w_jx_0+x_j+(\gamma-1)\frac{w_j}{|w|^2}\sum_{i=1}^\dm x_iw_i\right)^2\\
&=\gamma^2 x_0^2-2\gamma^2x_0\sum_{i=1}^\dm w_ix_i+\gamma^2\left(\sum_{i=1}^\dm w_ix_i\right)^2-\sum_{j=1}^\dm\left(
\gamma^2w_j^2x_0^2+x_j^2+(\gamma-1)^2\frac{w_j^2}{|w|^4}\left(\sum_{i=1}^\dm x_iw_i\right)^2
\right)\\
&\quad -\sum_{j=1}^\dm\left(
-2\gamma w_jx_jx_0-2\gamma(\gamma-1)\frac{w_j^2x_0}{|w|^2}\sum_{i=1}^\dm x_iw_i+2(\gamma-1)\frac{x_jw_j}{|w|^2}\sum_{i=1}^\dm x_iw_i
\right)\\
&=\gamma^2 x_0^2-2\gamma^2x_0\sum_{i=1}^\dm w_ix_i+\gamma^2\left(\sum_{i=1}^\dm w_ix_i\right)^2
-\gamma^2x_0^2|w|^2-\sum_{j=1}^\dm x_j^2-(\gamma-1)^2\frac{1}{|w|^2}\left(\sum_{i=1}^\dm x_iw_i\right)^2\\
&\quad +2 x_0\gamma\sum_{i=1}^\dm x_iw_i+2\gamma(\gamma-1)x_0\sum_{i=1}^\dm x_iw_i-2(\gamma-1)\frac{1}{|w|^2}\left(\sum_{i=1}^\dm x_iw_i\right)^2\\
&=\gamma^2 x_0^2+\gamma^2\left(\sum_{i=1}^\dm w_ix_i\right)^2
-\gamma^2x_0^2|w|^2-\sum_{j=1}^\dm x_j^2-(\gamma^2-1)\frac{1}{|w|^2}\left(\sum_{i=1}^\dm x_iw_i\right)^2\\
&=\gamma^2(1-|w|^2)x_0^2-\sum_{j=1}^\dm  x_j^2=x_0^2-x_1^2-\cdots-x_\dm^2.
\end{align*}
If we use $x\in\bbh^\dm$, then we have $x_0^2-x_1^2-\cdots-x_\dm^2=1$. This implies that
\[
y_0^2-y_1^2-\cdots-y_\dm^2=1,
\]
which is the desired result.
\end{proof}
\medskip

\begin{proof}[Proof of Lemma \ref{Lem-iso1}]
Let $a, b$ be tangent vectors at $x\in\bbh^\dm$. Since $\bbh^\dm$ is embedded in $\bbr^{\dm+1}$, tangent vectors $a$ and $b$ can also be considered as vectors in $\bbr^{\dm+1}$ which satisfy
\[
a_0x_0-a_1x_1-\cdots-a_\dm x_\dm=b_0x_0-b_1x_1-\cdots-b_\dm x_\dm=0.
\]
The induced metric defines the inner product between two tangent vectors as 
\[
a\cdot b=a_0b_0-a_1b_1-\cdots-a_\dm b_\dm =- a^\top \eta b.
\]
From the definition of the push forward of a tangent vector, we have
\[
dF_w(a)=B(w)a,\quad dF_w(b)=B(w)b.
\]
This yields
\[
dF_w(a)\cdot dF_w(b)=-(dF_w(a))^\top \eta\, dF_w(b)=-a^\top B(w)^\top \eta B(w) b=-a^\top\eta b=a\cdot b,
\]
where we used \eqref{LB-1} for the third equal sign. Finally, this implies that $F_w$ is an isometry on $\bbh^\dm$ for all $w$ such that $|w|<1$.
\end{proof}
\medskip

\begin{proof}[Proof of Lemma \ref{La.3}]
By a simple calculation, for any $w\in\bbr^\dm$ with $|w|<1$, we know the vertex of $\bbh^\dm$ maps to
\[
F_w(1, 0, \cdots, 0)=(\gamma, -\gamma w_1, \cdots, -\gamma w_\dm).
\]
The equation above shows that $1 \iff 3$.

Now, we investigate the relationship between $x\in\bbh^\dm$ and $w\in\bbr^\dm$ so that:
\[
F_w(x)=(1, 0, \cdots, 0).
\]

From the definition of $F_w$, we have
\begin{align}\label{a-a-1}
1=\gamma x_0-\gamma(w_1x_1+\cdots+w_\dm x_\dm),
\end{align}
and
\begin{align}\label{a-a-2}
0=-\gamma w_i x_0+x_i+(\gamma-1)\frac{w_i}{|w|^2}\sum_{j=1}^\dm w_jx_j,\quad \forall~i=1,2 , \cdots, \dm.
\end{align}
Equation \eqref{a-a-2} yields
\begin{align}\label{a-a-3}
w_i=\alpha x_i,\qquad\forall~i=1, 2, \cdots, \dm,
\end{align}
for some $\alpha\in\bbr$. We substitute \eqref{a-a-3} into \eqref{a-a-1} to get
\[
1=\gamma x_0-\gamma\alpha(x_1^2+\cdots+x_\dm^2)=\gamma x_0-\gamma\alpha(x_0^2-1),
\]
which gives
\begin{align}\label{a-a-4}
\gamma=\frac{1}{x_0-\alpha(x_0^2-1)}.
\end{align}

By \eqref{a-a-3}, we also have:
\[
|w|^2=w_1^2+w_2^2+\cdots+w_\dm^2=\alpha^2(x_1^2+\cdots+x_\dm^2)=\alpha^2(x_0^2-1),
\]
where for the last equality we used that $x\in\bbh^\dm$. From the definition of $\gamma$ and the above relation, we get
\begin{align}\label{a-a-5}
\gamma^2=\frac{1}{1-|w|^2}=\frac{1}{1-\alpha^2(x_0^2-1)}.
\end{align}
Finally, we compare \eqref{a-a-4} and \eqref{a-a-5} to get
\[
(x_0-\alpha(x_0^2-1))^2=1-\alpha^2(x_0^2-1),
\]
which is equivalent to
\begin{equation}
\label{eqn:x0-alpha}
(x_0^2-1)(1-\alpha x_0)^2=0.
\end{equation}

If $w=0$, then $F_w$ is the identity map. So, in this case, we have $x=(1, 0, \cdots, 0)$. Now assume that $|w|>0$.
Then, \eqref{a-a-3} implies $x_i\neq 0$ for some $1\leq i\leq \dm$, which yields
\[
x_0^2=1+x_1^2+\cdots+ x_\dm^2\geq1+x_i^2>1.
\]
Hence, from \eqref{eqn:x0-alpha}, we get the unique solution
\[
\alpha=\frac{1}{x_0},
\]
and this gives the relation
\[
w=\left(\frac{x_1}{x_0}, \cdots, \frac{x_\dm}{x_0}\right).
\]
Note that this relation also holds for $w=0$. We have thus shown that $2 \iff 3$, which concludes the proof.
\end{proof}
\medskip

\begin{proof}[Proof of Lemma \ref{Lemexf}]
First, we have
\begin{align*}
\gamma(-\hat{y})&=(1-\|\hat{y}\|^2)^{-1/2}=\left(1-\frac{y_1^2}{y_0^2}-\cdots-\frac{y_\dm^2}{y_0^2}\right)^{-1/2}\\
&=\frac{y_0}{y_0^2-y_1^2-\cdots-y_\dm^2}=y_0.
\end{align*}

From direct calculations, we have
\begin{align*}
(x+'y)_0&=(B(-\hat{y})x)_0=\sum_{\alpha=0}^\dm [B(-\hat{y})]_{0\alpha}x_\alpha=y_0x_0+\sum_{j=1}^\dm (-\gamma(-\hat{y}))(-\hat{y}_j)x_j\\
&=x_0y_0+x_1y_1+\cdots+x_\dm y_\dm.
\end{align*}
This is the first desired result. By a similar calculation, we get for $j =1,\dots,\dm$,
\begin{align*}
(x+'y)_j&=\sum_{\alpha=0}^\dm [B(-\hat{y})]_{j\alpha}x_\alpha=[B(-\hat{y})]_{j0}x_0+\sum_{k=1}^\dm [B(-\hat{y})]_{jk}x_k\\
&=\gamma(-\hat{y})\hat{y}_jx_0+\sum_{k=1}^\dm \left(\delta_{jk}+(\gamma(-\hat{y})-1)\frac{\hat{y}_j\hat{y}_k}{\|\hat{y}\|^2}\right)x_k
=x_0y_j+x_j+(y_0-1)\sum_{k=1}^\dm \frac{y_jy_kx_k}{y_0^2\|\hat{y}\|^2}\\
&=x_0y_j+x_j+(y_0-1)\sum_{k=1}^\dm \frac{y_jy_kx_k}{y_0^2-1}\\
&=x_0y_j+x_j+\frac{y_j}{y_0+1}(x_1y_1+\cdots+x_\dm y_\dm).
\end{align*}
This is the second part of the desired result.
\end{proof}
\medskip

\begin{proof}[Proof of Lemma \ref{msl}]
From direct calculations, we have
\begin{align*}
(-(x+'y))_0=(x+'y)_0=x_0y_0+x_1y_1+\cdots+x_\dm y_\dm,
\end{align*}
and
\begin{align*}
(-(x+'y))_j=-(x+'y)_j=-x_0y_j-x_j-\frac{y_j}{y_0+1}(x_1y_1+\cdots+x_\dm y_\dm).
\end{align*}
Also, we have
\[
((-x)+'(-y)')_0=x_0y_0+x_1y_1+\cdots+x_\dm y_\dm,
\]
and
\[
((-x)+'(-y)')_j=-x_0y_j-x_j-\frac{y_j}{y_0+1}(x_1y_1+\cdots+x_\dm y_\dm).
\]
Since we have $(-(x+'y))_0=((-x)+'(-y))_0$ and $(-(x+'y)_j)=((-x)+'(-y))_j$ for all $1\leq j\leq \dm$, we obtain the desired result.
\end{proof}
\medskip

\begin{proof}[Proof of Lemma \ref{La.8}]
First, we have the following calculation:
\begin{align}
\begin{aligned}\label{a-a-12}
((x+'y)+'z)_0&=(x+'y)_0z_0+\sum_{j=1}^\dm  (x+'y)_jz_j\\
&=x_0y_0z_0+z_0\sum_{j=1}^\dm x_jy_j+\sum_{j=1}^\dm z_j\left(x_0y_j+x_j+\frac{y_j}{y_0+1}\sum_{k=1}^\dm x_ky_k\right)\\
&=x_0y_0z_0+z_0\sum_{j=1}^\dm x_jy_j+x_0\sum_{j=1}^\dm y_jz_j+\sum_{j=1}^\dm  x_jz_j+\frac{1}{y_0+1}\sum_{j=1}^\dm  y_jz_j\sum_{k=1}^\dm x_ky_k.
\end{aligned}
\end{align}

We substitute $z=-y$ into \eqref{a-a-12} to get
\[
((x+'y)-'y)_0=x_0y_0^2+y_0\sum_{j=1}^\dm x_jy_j-x_0\sum_{j=1}^\dm y_j^2-\sum_{j=1}^\dm x_jy_j+\frac{1}{y_0+1}\sum_{j=1}^\dm (-y_j^2)\sum_{k=1}^\dm x_ky_k.
\]
If we use $y_0^2-y_1^2-\cdots-y_\dm^2=1$, then we can simplify the above relation to get
\[
((x+'y)-'y)_0=x_0.
\]
From the above result and \eqref{a-a-11}, we find for $j =1,\dots,\dm$,
\begin{align*}
((x+'y)-'y)_j&=(x+'y)_j-\frac{y_j}{y_0+1}(((x+'y)-'y)_0+(x+'y)_0)\\
&=x_j+\frac{y_j}{y_0+1}((x+'y)_0+x_0)-\frac{y_j}{y_0+1}(x_0+(x+'y)_0)\\
&=x_j.
\end{align*}
We infer that $(x+'y)-'y=x$.
\end{proof}
\medskip

\begin{proof}[Proof of Lemma \ref{l3.2}]
Set
\[
x=(x_0, x_1, \cdots, x_\dm)\in\bbh^\dm,\quad x_k=0.
\]
By a simple calculation, we have
\begin{align}
\begin{aligned}\label{c-2}
(x+'u_k(\ta))_0&=x_0\cosh \ta+x_k\sinh \ta,\\
(x+'u_k(\ta))_j&=x_0\sinh \ta\, \delta_{jk}+x_j+\frac{\sinh \ta\, \delta_{jk}}{\cosh \ta+1}(\sinh \ta \, x_k)\\
&=x_j(1+\delta_{jk}(\cosh \ta-1))+x_0\delta_{jk}\sinh \ta, \qquad\forall 1\leq j\leq \dm,
\end{aligned}
\end{align}
where $\delta$ is the Kronecker delta symbol. We substitute $\ta+\sa$ instead of $\ta$ in \eqref{c-2} to get
\begin{align}
\begin{aligned}\label{c-3}
(x+'u_k(\ta+\sa))_0&=x_0\cosh (\ta+\sa)+x_k\sinh (\ta+\sa),\\
(x+'u_k(\ta+\sa))_j&=x_j(1+\delta_{jk}(\cosh (\ta+\sa)-1))+x_0\delta_{jk}\sinh (\ta+\sa).
\end{aligned}
\end{align}
We substitute $x+'u_k(\ta)$ instead of $x$ and $\sa$ instead of $\ta$ into \eqref{c-2} to get
\begin{align}
\begin{aligned}\label{c-4}
((x+'u_k(\ta))+'u_k(\sa))_0&=(x+'u_k(\ta))_0\cosh \sa+(x+'u_k(\ta))_k\sinh \sa,\\
((x+'u_k(\ta))+'u_k(\sa))_j&=(x+'u_k(\ta))_j(1+\delta_{jk}(\cosh \sa-1))+(x+'u_k(\ta))_0\delta_{jk}\sinh \sa.
\end{aligned}
\end{align}

From the first equality of \eqref{c-4} and \eqref{c-2}, we have
\begin{align}
((x+'u_k(\ta))+'u_k(\sa))_0
&=(x_0\cosh \ta+x_k\sinh \ta)\cosh \sa+(x_k\cosh \ta+x_0\sinh \ta)\sinh \sa \nonumber \\
&=x_0\cosh(\ta+\sa)+x_k\sinh(\ta+\sa). \label{c-5}
\end{align}
Also, from the second equality of \eqref{c-4} and \eqref{c-2}, we have
\begin{align}
((x+'u_k(\ta))+'u_k(\sa))_j
&=\big(x_j(1+\delta_{jk}(\cosh \ta-1))+x_0\delta_{jk}\sinh \ta\big)
(1+\delta_{jk}(\cosh \sa-1)) \nonumber \\
&\quad+(x_0\cosh \ta+x_k\sinh \ta)\delta_{jk}\sinh \sa \nonumber \\
&=x_j\big(
(1+\delta_{jk}(\cosh \ta-1))(1+\delta_{jk}(\cosh \sa-1))+\sinh \ta\delta_{jk}\sinh \sa
\big) \nonumber \\
&\quad +x_0\big(
\delta_{jk}\sinh \ta(1+\delta_{jk}(\cosh \sa-1))+\cosh \ta\delta_{jk}\sinh \sa
\big) \nonumber \\
&=x_j(1+\delta_{jk}(\cosh(\ta+\sa)-1)+x_0\delta_{jk} \sinh(\ta+\sa). \label{c-6}
\end{align}
Finally, we combine \eqref{c-3}, \eqref{c-5}, and \eqref{c-6} to conclude that
\[
(x+'u_k(\ta))+'u_k(\sa)=x+'u_k(\ta+\sa).
\]
\end{proof}
\medskip

\begin{proof}[Proof of Lemma \ref{l3.4}]
$\diamond$ (Existence of the decomposition) First, we will show the existence of the decomposition \eqref{decomp}. Set $x=(x_0, x_1, \cdots, x_\dm)$, and define
\[
\ta=\tanh^{-1}\left(\frac{x_k}{x_0}\right).
\]
Then we can check that
\begin{align*}
(x-'u_k(\ta))_k&=((x_0, x_1, \cdots, x_\dm)-'(\cosh \ta, 0, \cdots, 0, \sinh \ta, 0, \cdots, 0))_k\\
&=((x_0, x_1, \cdots, x_\dm)+'(\cosh \ta, 0, \cdots, 0, -\sinh \ta, 0, \cdots, 0))_k\\
&=-x_0\sinh \ta+x_k+\frac{-\sinh \ta}{\cosh \ta+1}(- x_k \sinh \ta )\\
&=x_k \cosh \ta - x_0\sinh \ta =0.
\end{align*}
Therefore, 
\[
x-'u_k(\ta)\in P_k(0).
\]
If we set $y=x-'u_k(\ta)$, then we have
\[
y+'u_k(\ta)=(x-'u_k(\ta))+'u_k(\ta)=x,
\]
by using Lemma \ref{La.8}. \\

\noindent$\diamond$ (Uniqueness of the decomposition) Now, we show the uniqueness of a pair $(\ta, y)$ introduced in the existence part. If there exists another pair $(\sa, z)\in \bbr\times P_k(0)$ which satisfies
\[
x=z+'u_k(\sa),
\]
then we have
\[
y+'u_k(\ta)=z+'u_k(\sa).
\]
From Lemma \ref{l3.2}, we get
\[
(y+'u_k(\ta))-'u_k(\ta)=(z+'u_k(\sa))-'u_k(\ta)\quad\Rightarrow\quad y=z+'u_k(\sa-\ta).
\]
This implies that 
\begin{align*}
y_k=z_0\sinh(\sa-\ta)+z_k\cosh(\sa-\ta).
\end{align*}
Since $y_k=z_k=0$ and $z_0\geq1$, we have $\ta=\sa$. This yields $y=z$, showing the uniqueness of the decomposition \eqref{decomp}.
\end{proof}
\medskip

\begin{proof}[Proof of Lemma \ref{l3.7}]
Without loss of generality, we assume that $\pi_k(x)\leq\pi_k(y)$. Then, there exists $R$ such that
\[
\pi_k(x)=a_k-R, \qquad\text{and}\qquad\pi_k(y)=a_k+R.
\]
If we define
\[
\tilde{x}=x-'u_k(a_k),\quad \tilde{y}=y-'u_k(a_k),
\]
we have
\begin{align*}
\begin{cases}
\pi_k(\tilde{x})=\pi_k(x-'u_k(a_k))=\pi_k(x)-a_k=-R,\\
\pi_k(\tilde{y})=\pi_k(y-'u_k(a_k))=\pi_k(y)-a_k=R,
\end{cases}
\end{align*}
by Corollary \ref{c3.5}. On the other hand,
\[
\pi_k(\tilde{x})=\tanh^{-1}\left(\frac{\tilde{x}_k}{\tilde{x}_0}\right)\quad\text{and}\quad \pi_k(\tilde{y})=\tanh^{-1}\left(\frac{\tilde{y}_k}{\tilde{y}_0}\right)
\]
which implies
\[
\frac{\tilde{x}_k}{\tilde{x}_0}=-\tanh R\quad\text{and}\quad\frac{\tilde{y}_k}{\tilde{y}_0}=\tanh R.
\]
We use the above relation and the following facts:
\[
\tilde{x}_0\geq\sqrt{1+\tilde{x}_k^2}\quad\text{and}\quad\tilde{y}_0\geq\sqrt{1+\tilde{y}_k^2},
\]
to get
\[
\tanh (\sinh^{-1}\tilde{x}_k)=\frac{\tilde{x}_k}{\sqrt{1+\tilde{x}_k^2}}\leq -\tanh R\quad\text{and}\quad\tanh (\sinh^{-1}\tilde{y}_k)=\frac{\tilde{y}_k}{\sqrt{1+\tilde{y}_k^2}}\geq \tanh R.
\]
Since $\tanh$ and $\sinh$ are increasing function, we get
\[
\tilde{x}_k\leq -\sinh R\quad\text{and}\quad \tilde{y}_k\geq \sinh R. 
\]
We substitute the above result into the following calculation:
\begin{align*}
(\tilde{x}-'\tilde{y})_0=&=\tilde{x}_0\tilde{y}_0-\tilde{x}_1\tilde{y}_1-\cdots-\tilde{x}_\dm \tilde{y}_\dm\\
&=\sqrt{(1+\tilde x_1^2+\cdots+\tilde x_\dm^2)(1+\tilde y_1^2+\cdots+\tilde{y}_\dm^2)}-\tilde x_1\tilde y_1-\cdots-\tilde x_\dm\tilde y_\dm\\
&\geq 1+\sum_{l=1}^\dm |\tilde x_l\tilde y_l|-\sum_{l=1}^\dm\tilde x_l\tilde y_l\geq 1-2\tilde x_k\tilde y_k,
\end{align*}
to get
\[
(\tilde{x}-'\tilde{y})_0\geq 1+2\sinh^2R=\cosh(2R).
\]
Since $z \mapsto z-'u_k(a_k)$ is an isometry on $\bbh^\dm$, we finally get
\[
\cosh(\dist(x, y))=(x-'y)_0=(\tilde{x}-'\tilde{y})_0\geq\cosh(2R),
\]
which yields
\[
\dist(x, y)\geq 2R=|\pi_k(x)-\pi_k(y)|.
\]
\end{proof}


\section{Proof of Proposition \ref{P5.1}}\label{sec:app.b}
We look for a solution of (see \eqref{eqn:int-eq-gen0}):
\begin{equation}
\label{eqn:int-radial}
\tilde{\rho}(\theta_x)=\int_{B_v(R)}\Delta_xA(\theta_{xy})\tilde{\rho}(\theta_y)\d y, \qquad \text{ for } 0 \leq \theta_x \leq R,
\end{equation}
where $\theta_x=\mathrm{dist}(x, v)$ and $\theta_{xy}=\mathrm{dist}(x, y)$. Note that the radius $R$ is also unknown.

We first show that if $\Delta A(\theta)$ is in the form of a sum of powers of $\cosh \theta$, then necessarily, $\trho(\theta)$ is in the same form. Therefore, assume
\[
\Delta A(\theta):=\sum_{m=0}^N b_m \cosh^m\theta.
\]

Recall the hyperbolic cosine law:
\[
\cosh \theta_{xy}=\cosh\theta_x\cosh\theta_y-\sinh\theta_x\sinh\theta_y\cos\angle(xvy).
\]
This yields that
\begin{align}
\begin{aligned}\label{npowercosh}
\cosh^m\theta_{xy}&=\left(\cosh\theta_x\cosh\theta_y-\sinh\theta_x\sinh\theta_y\cos\angle(xvy)\right)^m\\
&=\sum_{k=0}^m(-1)^k{m \choose k}\cosh^{m-k}\theta_x\cosh^{m-k}\theta_y\sinh^k \theta_x\sinh^k \theta_y\cos^k \angle(xvy).
\end{aligned}
\end{align}

From a simple observation, $\cos\angle(xvy)=\cos(\pi-\angle(xv(-y)))=-\cos\angle(xv(-y))$. Hence, if $k$ is an odd number, then $\displaystyle{\int_{\mathbb{S}^{\dm-1}}\cos^k\angle(xvy) \d\sigma_y=0}$, and by integrating \eqref{npowercosh} over $\bbs^{\dm-1}$, we get
\[
\int_{\mathbb{S}^{\dm-1}}\cosh^m\theta_{xy}\d\sigma_y=\sum_{l=0}^{[m/2]}{m \choose 2l}\mathcal{A}_{l}\cosh^{m-2l}\theta_x\sinh^{2l}\theta_x\cosh^{m-2l}\theta_y\sinh^{2l}\theta_y,
\]
where
\[
\mathcal{A}_l:=\int_{\mathbb{S}^{\dm-1}}\cos^{2l}\angle(xvy) \d\sigma_y.
\]

Then, we have from \eqref{eqn:int-radial}:
\begin{align*}
&\tilde{\rho}(\theta_x)=\int_{B_v(R)}\sum_{m=0}^N b_m\cosh^m\theta_{xy}\tilde{\rho}(\theta_y) \d y\\
&=\sum_{m=0}^N b_m \int_0^R \left(\int_{\mathbb{S}^{\dm-1}}\cosh^m\theta_{xy}\d\sigma_y \right)\sinh^{\dm-1}\theta_y\tilde{\rho}(\theta_y) \d\theta_y\\
&=\sum_{m=0}^N b_m \int_0^R \left(\sum_{l=0}^{[m/2]}{m \choose 2l}\mathcal{A}_{l}\cosh^{m-2l}\theta_x\sinh^{2l}\theta_x\cosh^{m-2l}\theta_y\sinh^{2l}\theta_y \right)\sinh^{\dm-1}\theta_y\tilde{\rho}(\theta_y) \d\theta_y\\
&=\sum_{m=0}^N \sum_{l=0}^{[m/2]} b_m{m \choose 2l}\mathcal{A}_{l} \cosh^{m-2l}\theta_x\sinh^{2l}\theta_x\int_0^R \cosh^{m-2l}\theta_y\sinh^{2l+\dm-1}\theta_y \tilde{\rho}(\theta_y) \d\theta_y\\
&=\sum_{m=0}^N \sum_{l=0}^{[m/2]} b_m{m \choose 2l}\mathcal{A}_{l} \left(\int_0^R \cosh^{m-2l}\theta_y\sinh^{2l+\dm-1}\theta_y \tilde{\rho}(\theta_y) \d\theta_y\right)\cosh^{m-2l}\theta_x(\cosh^2\theta_x-1)^l.
\end{align*}

The calculation above implies that $\tilde{\rho}$ can be expressed as
\[
\tilde{\rho}(\theta)=\sum_{m=0}^N a_m\cosh^m\theta, \qquad \text{ for } 0 \leq \theta <R.
\]
Furthermore, this yields that
\begin{align}\label{recrel}
\sum_{m=0}^N a_m\cosh^m\theta_x=\sum_{k,m=0}^N \sum_{l=0}^{[m/2]} a_kb_m{m \choose 2l}\mathcal{A}_{l} \left(\int_0^R \cosh^{m-2l+k}\theta_y\sinh^{2l+\dm-1}\theta_y  \d\theta_y\right)\cosh^{m-2l}\theta_x(\cosh^2\theta_x-1)^l.
\end{align}

For the purpose of proving Proposition \ref{P5.1}, we only need the case $N=1$; nevertheless, we presented above the case of general $N$ for its own interest. Hence, from now on assume $b_m=0$, $m\geq 2$.

Consider three cases: 

\noindent {\em Case 1}: $b_0>0$ and $b_1=0$. In this case, \eqref{recrel} reduces to
\[
\sum_{m=0}^0 a_m\cosh^m\theta_x=\sum_{k=0}^0 a_kb_0\mathcal{A}_0\left(\int_0^R\cosh^k\theta_y\sinh^{\dm-1}\theta_y \d\theta_y\right),
\]
which implies $1=b_0\mathcal{A}_0\int_0^R\sinh^{\dm-1}\theta_y \d\theta_y$. Since $\mathcal{A}_0=|\bbs^{\dm-1}|$, we can determine $R$ uniquely. Also, we can determine $a_0=1$. We thus reached the equilibrium $\rho_c$ from \eqref{d-15}.\\

\noindent {\em Case 2}: $b_0=0$ and $b_1>0$. In this case, \eqref{recrel} reduces to
\[
\sum_{m=0}^1 a_m\cosh^m\theta_x=\sum_{k=0}^1 a_kb_1\mathcal{A}_0\left(\int_0^R\cosh^{k+1}\theta_y\sinh^{\dm-1}\theta_y \d\theta_y\right)\cosh\theta_x,
\]
which implies $1=\mathcal{A}_0b_1\int_0^R\cosh^2\theta_y\sinh^{\dm-1}\theta_y \d\theta_y$. One can then argue as in Case 1 to obtain the uniqueness of $R$, $a_0$ and $a_1$. This is the equilibrium solution from \eqref{d-8}. \\

\noindent {\em Case 3}: $b_0>0$ and $b_1>0$. In this case, \eqref{recrel} reads 
\begin{align*}
\sum_{m=0}^1 a_m\cosh^m\theta_x&=\sum_{k=0}^1 a_kb_0\mathcal{A}_0\left(\int_0^R\cosh^k\theta_y\sinh^{\dm-1}\theta_y\d\theta_y\right)\\
&\quad +\sum_{k=0}^1 a_kb_1\mathcal{A}_0\left(\int_0^R\cosh^{k+1}\theta_y\sinh^{\dm-1}\theta_y\d\theta_y\right)\cosh\theta_x.
\end{align*}
This yields
\begin{align*}
a_0&=\sum_{k=0}^1 a_kb_0\mathcal{A}_0\left(\int_0^R\cosh^k\theta_y\sinh^{\dm-1}\theta_y\d\theta_y\right)\\
&=b_0\mathcal{A}_0\left(
a_0\int_0^R \sinh^{\dm-1}\theta_y\d\theta_y+a_1\int_0^R\cosh\theta_y\sinh^{\dm-1}\theta_y\d\theta_y
\right)
\end{align*}
and
\begin{align*}
a_1&=\sum_{k=0}^1 a_kb_1\mathcal{A}_0\left(\int_0^R\cosh^{k+1}\theta_y\sinh^{\dm-1}\theta_y\d\theta_y\right)\\
&=b_1\mathcal{A}_0\left(
a_0\int_0^R\cosh\theta_y\sinh^{\dm-1}\theta_y\d\theta_y+a_1\int_0^R\cosh^2\theta_y\sinh^{\dm-1}\theta_y\d\theta_y
\right).
\end{align*}

From the two equations above we find
\begin{align}
\begin{aligned}\label{aratio}
a_0:a_1&=\left( b_0\mathcal{A}_0\int_0^R\cosh\theta_y\sinh^{\dm-1}\theta_y\d\theta_y \right): \left(1-b_0\mathcal{A}_0\int_0^R\sinh^{\dm-1}\theta_y\d\theta_y \right) \\[2pt]
&=\left( 1-b_1\mathcal{A}_0\int_0^R\cosh^2\theta_y\sinh^{\dm-1}\theta_y\d\theta_y \right): \left( b_1\mathcal{A}_0\int_0^R\cosh\theta_y\sinh^{\dm-1}\theta_y\d\theta_y \right).
\end{aligned}
\end{align}
Therefore, $R$ satisfies
\begin{align*}
&\left(b_0\mathcal{A}_0\int_0^R\cosh\theta_y\sinh^{\dm-1}\theta_y\d\theta_y\right)\left(b_1\mathcal{A}_0\int_0^R\cosh\theta_y\sinh^{\dm-1}\theta_y\d\theta_y\right)\\
&=\left(1-b_0\mathcal{A}_0\int_0^R\sinh^{\dm-1}\theta_y\d\theta_y\right)\left(1-b_1\mathcal{A}_0\int_0^R\cosh^2\theta_y\sinh^{\dm-1}\theta_y\d\theta_y\right),
\end{align*}
which yields
\begin{align*}
b_0b_1\mathcal{A}_0^2\left(\int_0^R \cosh\theta \sinh^{\dm-1}\theta \d\theta\right)^2&=1-b_0\mathcal{A}_0\int_0^R \sinh^{\dm-1}\theta \d\theta-b_1\mathcal{A}_0\int_0^R \cosh^2\theta \sinh^{\dm-1}\theta \d\theta\\
&\quad +b_0b_1\mathcal{A}_0^2\left(\int_0^R\sinh^{\dm-1}\theta \d\theta\right)\left(\int_0^R\cosh^2\theta_y\sinh^{\dm-1}\theta \d\theta\right).
\end{align*}

From now on, we only consider the case $\dm=2$. From the definition of $\mathcal{A}_l$, we get $\mathcal{A}_0=2\pi$. Then, from the equation above we get
\begin{align*}
4\pi^2 b_0b_1\left(\int_0^R \cosh\theta\sinh\theta \d\theta\right)^2&=1-2\pi b_0\int_0^R\sinh\theta \d\theta-2\pi b_1\int_0^R \cosh^2\theta\sinh\theta d\theta\\
&\quad +4\pi^2 b_0b_1\left(\int_0^R\sinh\theta d\theta\right)\left(\int_0^R\cosh^2\theta \sinh\theta \d\theta\right),
\end{align*}
which can be calculated as
\[
\pi^2 b_0b_1(\cosh^2R-1)^2=1-2\pi b_0(\cosh R-1)-\frac{2\pi b_1}{3}(\cosh^3R-1)+\frac{4\pi^2 b_0b_1}{3}(\cosh R-1)(\cosh^3 R-1).
\]
By some simplifications, we finally reach
\begin{align}\label{eqcoshR}
0=1-2\pi b_0(\cosh R-1)-\frac{2\pi b_1}{3}(\cosh^3R-1)+\frac{\pi^2 b_0b_1}{3}(\cosh R-1)^4.
\end{align}

We will prove that there exist unique $R$, $a_0$, and $a_1$, provided $b_0$ and $b_1$ are positive real numbers. Substitute $x=\cosh R-1$ into \eqref{eqcoshR} to get
\[
0=1-2\pi (b_0+b_1) x-2\pi b_1x^2-\frac{2\pi b_1}{3}x^3+\frac{\pi^2 b_0b_1}{3}x^4=:\fb(x).
\]
We want to find non-negative solutions of the above equation. We know that $\fb(0)=1$, $\fb'(0)=-2\pi(b_0+b_1)<0$, and
\[
\fb'(x)=-2\pi(b_0+b_1)-4\pi b_1 x-2\pi b_1x^2+\frac{4\pi^2 b_0b_1}{3}x^3.
\]

As $\fb'(0)<0$, and $\lim_{x\to\infty}\fb'(x)=\infty$, by the intermediate value theorem, there exists at least one positive solution $x_{+}$ such that $\fb'(x_{+})=0$. Now, we will show that the positive solution of $\fb'(x)=0$ is unique. Since the product of the three roots of $\fb'(x)=0$ is positive, if there exist more than two positive solutions then the other solution must also be positive. If $x_1, x_2, x_3$ are three positive solutions of $\fb'(x)=0$,  then $x_1x_2+x_2x_3+x_3x_1=-\frac{4\pi b_1}{4\pi^2 b_0b_1/3}=-\frac{3}{\pi b_0}<0$, which leads to a contradiction. We conclude that $\fb'(x)=0$ has a unique positive solution, which we denote by $x_+$. 

From the above, we infer that $\fb(x)$ is decreasing on $x\in[0, x_+]$ and increasing on $x\in [x_+, \infty)$. Finally, we use the intermediate value theorem to conclude that if $\fb(x_+)<0$ then there exist two distinct positive solutions of $\fb(x)=0$. We will now show that indeed, $\fb(x_+)<0$. From a simple calculation, we have
\begin{align*}
\fb\left(\frac{1}{2\pi(b_0+b_1)}\right)&=1-1-\frac{2\pi b_1}{(2\pi(b_0+b_1))^2}-\frac{2\pi b_1}{3(2\pi(b_0+b_1))^3}+\frac{\pi^2 b_0b_1}{3(2\pi(b_0+b_1))^4}\\
&=-\frac{b_1}{2\pi(b_0+b_1)^2}-\frac{b_1}{12\pi^2(b_0+b_1)^3}+\frac{b_0b_1}{48\pi^2(b_0+b_1)^4}\\
&<0-\frac{b_1}{12\pi^2(b_0+b_1)^3}+\frac{b_1}{48\pi^2(b_0+b_1)^3}<0.
\end{align*}
So we can conclude that $\displaystyle{\fb(x_+)\leq \fb\left(\frac{1}{2\pi(b_0+b_1)}\right)<0}$. 

Denote the two distinct roots of $\fb(x)=0$ by $\alpha_{b_0b_1}<\beta_{b_0b_1}$. There are two choices for the radius $R$: $\cosh R=\alpha_{b_0b_1}+1$ and $\cosh R=\beta_{b_0b_1}+1$. We will show that the latter cannot be possible.

As needed below, we calculate $\fb\left(\frac{1+\sqrt{1+4\pi b_0}}{2\pi b_0}\right)$. Set $\lambda:=\frac{1+\sqrt{1+4\pi b_0}}{2\pi b_0}$; then $\pi b_0\lambda^2=\lambda+1$. From this fact, we have
\begin{align*}
\fb\left(\lambda\right)&=1-2\pi(b_0+b_1)\lambda-2\pi b_1\lambda^2-\frac{2\pi b_1}{3}\lambda^3+\frac{\pi^2 b_0 b_1}{3}\lambda^4\\
&=1-2\pi(b_0+b_1)\lambda-2\pi b_1\lambda^2-\frac{2\pi b_1}{3}\lambda^3+\frac{\pi b_1 \lambda^2}{3}(\pi b_0\lambda^2)\\
&=1-2\pi(b_0+b_1)\lambda-2\pi b_1\lambda^2-\frac{2\pi b_1}{3}\lambda^3+\frac{\pi b_1 \lambda^2}{3}(\lambda+1)\\
&=1-2\pi(b_0+b_1)\lambda-\frac{5\pi}{3} b_1\lambda^2-\frac{\pi b_1}{3}\lambda^3\\
&<1-2\pi b_0 \lambda<0.
\end{align*}
Note that
\[
\alpha_{b_0b_1}<\frac{1}{2\pi(b_0+b_1)}<\lambda=\frac{1+\sqrt{1+4\pi b_0}}{2\pi b_0}<\beta_{b_0b_1}.
\]
By \eqref{aratio}, we have
\begin{equation}
\label{eqn:azao}
a_0:a_1=\left( \pi b_0(\cosh^2 R-1) \right): \left( 1-2\pi b_0(\cosh R-1) \right),
\end{equation}
which implies
\begin{equation}
\label{eqn:trho-1}
\tilde{\rho}(\theta)=C\left( \pi b_0(\cosh^2 R-1)+\left(1-2\pi b_0(\cosh R-1)\right)\cosh\theta \right), \qquad \text{ for } 0 \leq \theta <R.
\end{equation}
for some constant $C>0$.

Now, assume that $\cosh R=\beta_{b_0b_1}+1$. Then, we have
\[
\tilde{\rho}(0)=C\left(
\pi b_0\beta_{b_0b_1}(\beta_{b_0b_1}+2)+(1-2\pi b_0\beta_{b_0b_1})
\right)=C(\pi b_0\beta_{b_0b_1}^2+1),
\]
and
\begin{align*}
\tilde{\rho}(R)&=C\left(\pi  b_0(\cosh^2R-1)+(1-2\pi b_0(\cosh R-1))\cosh R\right)\\
&=C\left(\pi b_0\beta_{b_0b_1}(\beta_{b_0b_1}+2)+(1-2\pi b_0\beta_{b_0b_1})(\beta_{b_0b_1}+1)\right)\\
&=C(-\pi b_0 \beta_{b_0b_1}^2+\beta_{b_0b_1}+1).
\end{align*}

Since $\frac{1\pm\sqrt{1+4\pi b_0}}{2\pi b_0}$ are the two solutions of $-\pi b_0 x^2+x+1=0$ and $\beta_{b_0b_1}>\frac{1+\sqrt{1+4\pi b_0}}{2\pi b_0}$, then $-\pi b_0\beta_{b_0b_1}^2+\beta_{b_0b_1}+1<0$. However, this implies $\trho(R)<0$, which is not possible. We conclude that $\cosh R$ cannot be $\beta_{b_0b_1}+1$, but instead, $\cosh R=\alpha_{b_0b_1}+1$ ($R$ is unique).

In this case, by \eqref{eqn:azao}, we know that $a_0$ and $a_1$ have the same sign since $1-2\pi b_0(\cosh R-1)=1-2\pi b_0 \alpha_{b_0b_1}>1-\frac{b_0}{b_0+b_1}>0$. From \eqref{eqn:trho-1} we can express the equilibrium as
\[
\tilde{\rho}(\theta)=\begin{cases}
\displaystyle C\left(\pi b_0\alpha_{b_0b_1}(\alpha_{b_0b_1}+2)+(1-2\pi b_0\alpha_{b_0b_1})\cosh\theta\right),\qquad&\text{if }0\leq \theta\leq R,\\
0,\qquad&\text{if }\theta>R.
\end{cases}
\]
Here, $C$ should satisfy $1=2\pi \int_0^{\cosh^{-1}(\alpha_{b_0b_1}+1)}\tilde{\rho}(\theta)\sinh\theta d\theta$ (unit mass condition). So we have
\begin{align*}
\frac{1}{2\pi}&=\int_0^{\cosh^{-1}(\alpha_{b_0b_1}+1)}\tilde{\rho}(\theta)\sinh\theta \d\theta\\
&=C\int_0^{\cosh^{-1}(\alpha_{b_0b_1}+1)}\left(\pi b_0\alpha_{b_0b_1}(\alpha_{b_0b_1}+2)\sinh\theta+(1-2\pi b_0\alpha_{b_0b_1})\cosh\theta\sinh\theta\right) \d\theta\\
&=C\left(
\pi b_0 \alpha_{b_0b_1}^2(\alpha_{b_0b_1}+2)+(1-2\pi b_0\alpha_{b_0b_1})\frac{1}{2}\alpha_{b_0b_1}(\alpha_{b_0b_1}+2)
\right)\\
&=\frac{1}{2}C\alpha_{b_0b_1}(\alpha_{b_0b_1}+2).
\end{align*}
This implies that $C=\frac{1}{\pi\alpha_{b_0b_1}(\alpha_{b_0b_1}+2)}$.

Finally, derive
\[
\tilde{\rho}(\theta)=\begin{cases}
\displaystyle  b_0+\left(\frac{1-2\pi b_0\alpha_{b_0b_1}}{\pi\alpha_{b_0b_1}(\alpha_{b_0b_1}+2)}\right)\cosh\theta, \qquad&\text{if }0\leq \theta\leq \cosh^{-1}(\alpha_{b_0b_1}+1),\\
0,\qquad&\text{if }\theta>\cosh^{-1}(\alpha_{b_0b_1}+1).
\end{cases}
\]
\qed

\end{document}